\documentclass[10pt, reqno]{amsart}
\usepackage{amsfonts}
\usepackage{amsbsy}
\usepackage{tikz}
\usepackage{amsmath}
\usepackage{amssymb}
\usepackage{amsthm}
\usepackage{syntonly}
\usepackage{url} 
\usepackage{amscd} 
\usepackage{tikz-cd} 
\usepackage{bm} 
\usepackage{mathdots} 

\usepackage[colorlinks,linkcolor=orange, anchorcolor=green, citecolor=purple ]{hyperref}
\usepackage{graphicx} 

\usepackage{mathrsfs} 
\usepackage{comment} 
\usepackage{bbm} 

\usepackage[utf8x]{inputenc}
    \usepackage{ucs}
\newcommand\quotient[2]{
        \mathchoice
            {
                \text{\raise1ex\hbox{$\#1$}\Big/\lower1ex\hbox{$\#2$}}%
            }
            {
                \#1\,/\,\#2
            }
            {
                \#1\,/\,\#2
            }
            {
                \#1\,/\,\#2
            }
    }

\newcommand{\R}{\mathbb{R}} 
\newcommand{\Z}{\mathbb{Z}}
\newcommand{\Q}{\mathbb{Q}}

\newcommand{\bma}{\bm{\mathrm{a}}}
\newcommand{\bmA}{\bm{\mathrm{A}}}

\newcommand{\bmB}{\bm{\mathrm{B}}}

\newcommand{\bmF}{\bm{\mathrm{F}}}
\newcommand{\bmG}{\bm{\mathrm{G}}}
\newcommand{\bmH}{\bm{\mathrm{H}}}
\newcommand{\bmK}{\bm{\mathrm{K}}}

\newcommand{\bmM}{\bm{\mathrm{M}}}

\newcommand{\bmP}{\bm{\mathrm{P}}}

\newcommand{\bmS}{\bm{\mathrm{S}}}

\newcommand{\bmU}{\bm{\mathrm{U}}}

\newcommand{\bmZ}{\bm{\mathrm{Z}}}

\newcommand{\rmA}{\mathrm{A}}

\newcommand{\rmF}{\mathrm{F}}
\newcommand{\rmG}{\mathrm{G}}
\newcommand{\rmH}{\mathrm{H}}
\newcommand{\rmK}{\mathrm{K}}
\newcommand{\rmL}{\mathrm{L}}
\newcommand{\rmM}{\mathrm{M}}
\newcommand{\rmP}{\mathrm{P}}

\newcommand{\rmS}{\mathrm{S}}
\newcommand{\rmU}{\mathrm{U}}

\newcommand{\rmZ}{\mathrm{Z}}

\newcommand{\fraka}{\mathfrak{a}}

\newcommand{\frakg}{\mathfrak{g}}
\newcommand{\frakh}{\mathfrak{h}}
\newcommand{\frakk}{\mathfrak{k}}

\newcommand{\frakm}{\mathfrak{m}}
\newcommand{\frakp}{\mathfrak{p}}

\newcommand{\fraks}{\mathfrak{s}}

\newcommand{\frakz}{\mathfrak{z}}

\newcommand{\scrG}{\mathscr{G}}
\newcommand{\scrP}{\mathscr{P}}

\newcommand{\scrU}{\mathscr{U}}

\newcommand{\calA}{\mathcal{A}}
\newcommand{\calB}{\mathcal{B}}
\newcommand{\calC}{\mathcal{C}}

\newcommand{\calF}{\mathcal{F}}
\newcommand{\calG}{\mathcal{G}}
\newcommand{\calH}{\mathcal{H}}

\newcommand{\calL}{\mathcal{L}}

\newcommand{\calN}{\mathcal{N}}

\newcommand{\calP}{\mathcal{P}}

\newcommand{\calT}{\mathcal{T}}

\newcommand{\wtX}{\widetilde{X}}
\newcommand{\wtY}{\widetilde{Y}}

\newcommand{\ep}{\varepsilon}

\newtheorem{thm}{Theorem}[section]

\newtheorem{coro}[thm]{Corollary}
\newtheorem{defi}[thm]{Definition}

\newtheorem{lem}[thm]{Lemma}
\newtheorem{prop}[thm]{Proposition}

\newtheorem{ques}[thm]{Question}

\newcommand{\bs}{\backslash}  

\DeclareMathOperator{\SL}{SL}

\DeclareMathOperator{\SO}{SO}

\DeclareMathOperator{\ad}{ad}
\DeclareMathOperator{\Ad}{Ad}

\DeclareMathOperator{\id}{id}

\DeclareMathOperator{\rigid}{rigid}

\DeclareMathOperator{\Tr}{Tr}
\DeclareMathOperator{\Vol}{Vol}

\newcommand{\norm}[1]{\left\lVert#1\right\rVert}

\begin{document}

\title[Uniform nondivergence]{
NONDIVERGENCE ON HOMOGENEOUS SPACES AND RIGID TOTALLY GEODESICS}

\author{Han Zhang}

\author{Runlin Zhang}

\begin{abstract}
Let $G/\Gamma$ be the quotient of a semisimple Lie group by an arithmetic lattice. We show that for reductive subgroups $H$ of $G$  that is large enough, the orbits of $H$ on $G/\Gamma$ intersect nontrivially with a fixed compact set. As a consequence, we deduce finiteness result for totally geodesic submanifolds of arithmetic quotients of symmetric spaces that do not admit nontrivial deformation and with bounded volume. Our work generalizes previous work of Tomanov--Weiss and Oh on this topic.
\end{abstract}

\maketitle
\tableofcontents

\section{Introduction}
\subsection{Main results}
Let $\bmG$ be a semisimple linear algebraic group defined over $\Q$ and $\Gamma \leq \bmG(\Q)$ an arithmetic lattice. Let $\bmH$ be a subalgebraic group of $\bmG_{\R}$ (over $\R$).  
Let the Roman letter $\rmG$ (resp. $\rmH$) denote the identity connected component of $\bmG(\R)$ (resp. $\bmH(\R)$) in the analytic topology. Without loss of generality assume $\Gamma$ is contained in $\rmG$.

\begin{defi}
Let $A$ be a subgroup of $\rmG$, the action of $A$ on $\rmG/\Gamma$ is said to be \textbf{uniformly non-divergent} if there exists a compact set $C\subset \rmG/\Gamma$ such that for all $x \in \rmG/\Gamma$, there exists $g\in A$ such that $g\cdot x \in C$, or equivalently, $\rmG/\Gamma= A \cdot C$.
\end{defi}

We are interested in conditions on $\bmH$ that would guarantee the action of $\rmH$ on $\rmG/\Gamma$ is uniformly non-divergent.

On the one hand, by \cite{DanMar91}, if $\bmH$ is semisimple and has no compact factor, then the action of $\rmH$ on $\rmG/\Gamma$ is uniformly non-divergent if the centralizer of $\bmH$ in $\bmG$ is finite (see \cite[Lemma 3.2]{EinMarVen09}). On the other hand, if $\bmH$ contains a maximal $\R$-split torus, then this is also true and is due to \cite[Theorem 1.3]{TomWei03} extending the idea of Margulis (see \cite[Appendix]{TomWei03}). 

In the present article we find a common generalization of both theorems. 

\begin{thm}\label{thmMain}
Assume that $\bmH$ is connected, reductive, $\R$-split and the centralizer of $\bmH$ in $\bmG$ is $\R$-anisotropic modulo the center of $\bmH$, then the action of $\rmH$ on $\rmG/\Gamma$ is uniformly non-divergent.
\end{thm}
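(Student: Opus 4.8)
The plan is to reduce the statement to the two known cases cited in the introduction (the Dani--Margulis case for semisimple $\bmH$ without compact factor, and the Tomanov--Weiss case for $\bmH$ containing a maximal $\R$-split torus) by exploiting the structure theory of connected reductive $\R$-split groups. Write $\bmH = \bmZ(\bmH)^{\circ} \cdot \bmH'$ where $\bmZ(\bmH)^{\circ}$ is the connected center (an $\R$-split torus, since $\bmH$ is $\R$-split) and $\bmH' = [\bmH,\bmH]$ is semisimple and $\R$-split, hence without compact factors. The hypothesis that the centralizer $\bmC := Z_{\bmG}(\bmH)$ is $\R$-anisotropic modulo the center of $\bmH$ means that $\bmZ(\bmH)^{\circ}$ is, up to isogeny, the maximal $\R$-split torus in $\bmC$; equivalently $\bmZ(\bmH)^{\circ}$ is a maximal $\R$-split torus of $Z_{\bmG}(\bmH')$ as well, because any $\R$-split torus centralizing $\bmH'$ and containing $\bmZ(\bmH)^{\circ}$ centralizes all of $\bmH$.

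The key geometric input is then: enlarge $\bmZ(\bmH)^{\circ}$ to a maximal $\R$-split torus $\bmS$ of $\bmG$ containing it. I would like to arrange that $\bmS$ normalizes, or is contained in, a conjugate of a group to which Tomanov--Weiss applies. More precisely, the strategy I expect to work is: (1) first use the Dani--Margulis/Tomanov--Weiss machinery to show that the $\rmH'$-orbit of any point, after translating by a suitable element, returns to a compact set up to the ambiguity of the centralizer direction — i.e. one obtains a ``partial'' nondivergence along $\bmH'$ modulo $\bmC(\R)$; (2) then observe that the leftover divergence lives in the $\R$-anisotropic-mod-center part of $\bmC$ together with the split torus $\bmZ(\bmH)^{\circ} \subseteq \rmH$ itself, and the torus part can be absorbed using a one-parameter (or full torus) nondivergence argument à la Margulis for the action of $\bmZ(\bmH)^{\circ}$. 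Combining, every point can be moved into a fixed compact set by an element of $\rmH = \bmZ(\bmH)^{\circ}(\R)^{\circ}\cdot \rmH'$.

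Concretely, the cleanest route is probably via reduction theory and the Langlands/Levi decomposition relative to a minimal parabolic $\bmP$ containing $\bmS$: a Siegel domain $\Sigma = \Omega \cdot A_{\bmP}^{+} \cdot K$ covers $\rmG/\Gamma$, and divergence of an orbit is detected by the $A_{\bmP}^{+}$-component going to infinity along the simple roots. One shows that for a suitable element $h$ in $\rmH$ the roots that could blow up are precisely controlled: roots nontrivial on $\bmS$ but trivial on $\bmZ(\bmH)^{\circ}$ are handled because $\bmH'$ acts on the corresponding unipotent directions with no nonzero fixed vectors (this is where ``$\bmH$ large enough'' / anisotropic centralizer enters, via a $\SL_2$-type or highest-weight argument as in Dani--Margulis), while roots nontrivial on $\bmZ(\bmH)^{\circ}$ are handled directly by choosing the $\bmZ(\bmH)^{\circ}$-component of $h$ appropriately, and roots trivial on all of $\bmH$ cannot cause divergence because of the anisotropy hypothesis (there simply is no room for them). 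Quantitatively this is the content of a uniform version of the nondivergence criterion (Dani--Margulis / Kleinbock--Margulis), applied on each relevant coordinate.

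The main obstacle I anticipate is the bookkeeping in step where $\bmH'$ and the torus $\bmZ(\bmH)^{\circ}$ interact: one must choose the $\rmH'$-element and the $\bmZ(\bmH)^{\circ}$-element \emph{simultaneously and uniformly in $x$}, so that fixing the non-torus roots does not undo the torus roots and vice versa. I would handle this by an order-of-operations argument — first use $\rmH'$ to enter a compact set modulo $\bmC(\R)$ (legitimate because $\bmH'$ is semisimple with finite centralizer in $\bmG/\bmC$-type quotient after passing to the right subquotient), then within that bounded region the remaining freedom in $\bmC(\R)$ is, modulo the anisotropic part which is already compact, exactly $\bmZ(\bmH)^{\circ}(\R)$, and a final application of Margulis-type torus nondivergence (or just the Tomanov--Weiss theorem applied to the reductive $\R$-split group $\bmZ(\bmH)^{\circ}\cdot(\text{something containing a maximal split torus})$) finishes it. Making the "modulo $\bmC(\R)$" precise — i.e. showing the relevant quotient orbit nondivergence lifts — is the delicate point and likely requires the continuity/properness arguments of \cite[Lemma 3.2]{EinMarVen09} together with the $\R$-anisotropy to ensure the fibers are compact.
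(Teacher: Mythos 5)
Your plan does not go through as stated, because neither of the two black boxes you want to invoke actually applies. The Dani--Margulis nondivergence for the semisimple part $\bmH'$ requires $\bmZ_{\bmG}\bmH'$ to be finite; here that centralizer contains the full split torus $\bmZ(\bmH)^{\circ}$, so $\rmH'$-orbits really can diverge, and there is no established ``nondivergence modulo $\bmZ_{\bmG}\bmH(\R)$'' statement to lean on --- the centralizer is not normal, so such a quotient is not even a homogeneous space of the same kind. Likewise Tomanov--Weiss requires $\bmH$ to contain a maximal $\R$-split torus of $\bmG$, which is not assumed. Consequently the ``first return along $\rmH'$, then absorb the torus direction'' scheme is circular: formalizing step~(1) already requires the simultaneous control of both pieces that you flag as the delicate point, and that \emph{is} the theorem, not a lemma. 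Your Siegel-domain sketch in the second paragraph is closer in spirit but is not carried out; in particular you cannot simply choose the $\rmH'$-component and the $\bmZ(\bmH)^{\circ}$-component independently per root, since pushing in one direction can create short vectors in others, which is the whole difficulty.

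The paper's proof has a different shape, worth contrasting. Writing $\bmH = \bmS\cdot\bmM$, \emph{only} the torus $\rmS$ ever moves the point; $\rmM$ never acts. Instead $\rmM$ is used to constrain the obstruction: one defines $\delta_{\rmM}([g])$ as an infimum of normalized covolumes taken \emph{only over $\rmM$-stable} rational subspaces, and invokes a generalized Dani--Margulis nondivergence criterion (Daw--Gorodnik--Ullmo, Theorem~4.6 in the $\SL_N$ case; Daw--Gorodnik--Ullmo--Li in general) saying that keeping $\delta_{\rmM}$ bounded below after an $\rmS$-translate already forces an $\rmM$-translate into a compact set. The expansion step is then a Tomanov--Weiss-style pushout by $\rmS$, but one needs only to expand $\rmM$-stable subspaces, and this is where the anisotropy of $\bmZ_{\bmG}\bmH / \bmZ(\bmH)$ enters: it forces the irreducible $\rmH$-summands of the ambient representation to be pairwise non-isomorphic (an intertwiner would produce a noncompact one-parameter subgroup in the centralizer outside $\bmZ(\bmH)$), and that non-isomorphism is exactly what makes the projection of an $\rmM$-stable subspace onto a suitable ``coordinate plane'' $V_I$ bijective, so that a single $s\in\rmS$ expands the whole subspace uniformly. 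Your root-space observation that $\bmH'$ has no nonzero fixed vectors in root spaces trivial on $\bmZ(\bmH)^{\circ}$ but nontrivial on the big split torus is a correct and closely related consequence of anisotropy, but it is not how the argument is organized, and by itself it does not supply the quantitative nondivergence criterion needed to finish.
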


Recall that for an $\R$-linear algebraic group $\bmF$, it is $\R$-anisotropic iff its real point $\bmF(\R)$ is compact.
We shall write $\bmZ_{\bmG}\bmH$ for the centralizer of $\bmH$ in $\bmG$ and $\bmZ(\bmH)$ for the center of $\bmH$.

\subsection{Generalizations}

In this subsection we discuss some further generalization of our main theorems.

Combining with \cite[Corollary 1.3]{ShahWeiss00}, we have
\begin{thm}\label{thmEpi}
Let $\bmF$ be a connected $\R$-subgroup of $\bmG$. Assume that the epimorphic closure of $\bmF$ in $\bmG$ contains a subgroup $\bmH$ satisfying the condition in Theorem \ref{thmMain}. Then the action of $\rmF$ on $\rmG/\Gamma$ is uniformly non-divergent.
\end{thm}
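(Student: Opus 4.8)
The plan is to reduce the statement to Theorem \ref{thmMain} via the notion of epimorphic subgroups and the orbit-closure result of Shah--Weiss. Recall that a subgroup $\bmF$ of a connected group $\bmL$ is \emph{epimorphic} in $\bmL$ if every algebraic representation of $\bmL$ on which $\bmF$ has a fixed vector also has an $\bmL$-fixed vector; equivalently, $\bmF$ and $\bmL$ have the same regular functions that are invariant under left (or right) translation. The key dynamical input, \cite[Corollary 1.3]{ShahWeiss00}, states that if $\bmF$ is epimorphic in $\bmL$, then for every $x \in \rmG/\Gamma$ the closure of the $\rmF$-orbit $\overline{\rmF \cdot x}$ contains the $\rmL$-orbit $\rmL \cdot x$; in particular $\overline{\rmF\cdot x} = \overline{\rmL \cdot x}$.

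First I would set $\bmL$ to be the epimorphic closure of $\bmF$ in $\bmG$, i.e. the smallest algebraic subgroup of $\bmG$ containing $\bmF$ in which $\bmF$ is epimorphic; this exists and is connected since $\bmF$ is connected. By hypothesis $\bmL$ contains a subgroup $\bmH$ satisfying the conditions of Theorem \ref{thmMain} (connected, reductive, $\R$-split, with $\bmZ_{\bmG}\bmH$ anisotropic modulo $\bmZ(\bmH)$). Apply Theorem \ref{thmMain} to $\bmH \leq \bmG$: there is a fixed compact set $C \subset \rmG/\Gamma$ with $\rmG/\Gamma = \rmH \cdot C$. In particular, for every $x \in \rmG/\Gamma$ there is $h \in \rmH$ with $h \cdot x \in C$, so \emph{a fortiori} $(\rmL \cdot x) \cap C \neq \emptyset$, hence $(\overline{\rmL \cdot x}) \cap C \neq \emptyset$.

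Next, fix $x \in \rmG/\Gamma$. By \cite[Corollary 1.3]{ShahWeiss00} applied with the pair $\bmF \leq \bmL$ at the point $x$, we have $\rmL \cdot x \subseteq \overline{\rmF \cdot x}$, and therefore $\overline{\rmL \cdot x} = \overline{\rmF \cdot x}$. Combined with the previous paragraph, $(\overline{\rmF \cdot x}) \cap C \neq \emptyset$. To upgrade this to an actual intersection of the orbit $\rmF \cdot x$ with a compact set, enlarge $C$ slightly: choose a relatively compact open neighborhood $C'$ of $C$ in $\rmG/\Gamma$ with compact closure $\overline{C'}$. Since $\overline{\rmF\cdot x}$ meets the open set $C'$ and $C'$ is open, the orbit $\rmF \cdot x$ itself meets $C'$, hence meets the fixed compact set $\overline{C'}$. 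Thus for every $x$ there is $f \in \rmF$ with $f \cdot x \in \overline{C'}$, i.e. $\rmG/\Gamma = \rmF \cdot \overline{C'}$, which is exactly uniform non-divergence for the $\rmF$-action with compact set $\overline{C'}$.

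The only delicate point is the last upgrade step, where one must be careful that $\overline{\rmF\cdot x}$ meeting the \emph{closed} set $C$ does not immediately give a point of the orbit in a compact set; routing through a slightly larger open neighborhood $C'$ of $C$ resolves this, using only that $\rmG/\Gamma$ is locally compact and that a limit point of $\rmF\cdot x$ lying in the open set $C'$ forces $\rmF\cdot x\cap C'\neq\emptyset$. A secondary technical matter is confirming that passing to connected components (Roman-letter groups, analytic identity components) is harmless: $\rmF$ is Zariski dense in $\bmF$, $\bmF$ is epimorphic in $\bmL$, and the Shah--Weiss statement is about the real-point identity components, so the chain of inclusions $\rmF\cdot x\subseteq \rmL\cdot x$-closure goes through verbatim. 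No genuinely new obstacle arises beyond correctly invoking the two cited results.
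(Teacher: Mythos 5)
Your proof is correct and follows the same route as the paper: define the epimorphic closure $\bmL$, invoke Shah--Weiss to transfer the $\rmL$-orbit (hence the $\rmH$-orbit, since $\bmH\leq\bmL$) into $\overline{\rmF\cdot x}$, and then apply Theorem \ref{thmMain}. The paper's proof is a one-liner that leaves the closure-to-orbit upgrade implicit; your routing through a relatively compact open neighborhood $C'$ of $C$ makes that final step explicit but does not change the argument in substance.
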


By using \cite[Theorem 1.1]{RicSha18}, we have a uniform version of our main theorem.
\begin{thm}\label{thmUniform}
Consider $\calH$, the set of all connected reductive $\R$-subgroups of $\bmG$ that are $\R$-split and whose centralizer in $\bmG$ is $\R$-anisotropic modulo its center, then we can find a compact set $C\subset \rmG/\Gamma$ such that for all $x\in \rmG/\Gamma$, for all $\bmH\in \calH$, there exists $h\in \rmH$ such that $hx \in C$.
\end{thm}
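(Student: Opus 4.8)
The strategy is to combine the single-group statement in Theorem \ref{thmMain} with a compactness/quantitative-nondivergence argument that makes the compact set $C$ uniform over the family $\calH$. The natural tool is the quantitative nondivergence of \cite[Theorem 1.1]{RicSha18}: for a suitable class of polynomial-like trajectories on $\rmG/\Gamma$, there is a compact set whose complement is escaped by at most a controlled proportion of the trajectory. The key observation that ties this to the family $\calH$ is that every $\bmH \in \calH$ is generated by one-parameter $\R$-split unipotent and diagonalizable subgroups, and more importantly contains a maximal $\R$-split torus of itself whose dimension is at most $\rank_{\R}\bmG$; thus one can parametrize the relevant orbits by bounded-degree polynomial maps into $\rmG$ with degree bounds depending only on $\dim \bmG$, not on the individual $\bmH$.

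First I would reduce to a bounded-complexity statement. Because each $\bmH \in \calH$ is $\R$-split reductive, $\rmH$ is generated by its maximal $\R$-split torus $\rmA_{\bmH}$ together with the root subgroups; the point is that the whole orbit $\rmH x$ is covered by translating $x$ along polynomial curves of degree $\le N_0 = N_0(\dim\bmG)$. Applying Theorem \ref{thmMain} to a single $\bmH$ gives a compact $C_{\bmH}$ with $\rmH C_{\bmH} = \rmG/\Gamma$; the task is to replace $C_{\bmH}$ by a $C$ independent of $\bmH$. For this I would argue by contradiction: if no uniform $C$ exists, pick $\bmH_n \in \calH$ and $x_n \in \rmG/\Gamma$ such that $\rmH_n x_n$ stays outside the compact set $C_n$ (an exhaustion). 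Using a Chevalley/Zariski-closure compactness argument — the $\bmH_n$ live in a bounded-dimensional variety of subgroups of $\bmG$, so after passing to a subsequence $\bmH_n \to \bmH_\infty$ in an appropriate sense — one extracts a limiting subgroup; the main subtlety is that the limit of $\R$-split reductive groups with anisotropic centralizer modulo center need \emph{not} lie in $\calH$ (the split rank can drop, or the centralizer condition can fail in the limit), so one cannot simply appeal to Theorem \ref{thmMain} for $\bmH_\infty$.

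This is where \cite[Theorem 1.1]{RicSha18} does the real work: rather than taking an abstract limit of groups, one applies the quantitative nondivergence estimate uniformly. The estimate says that for the class of admissible polynomial trajectories of degree $\le N_0$, there is a function giving, for each $\epsilon$, a compact set $C_\epsilon$ such that any such trajectory segment either stays $\epsilon$-mostly in $C_\epsilon$ or is contained in a proper algebraic subvariety (the "degenerate" case governed by the $\Gamma$-rational flags). One then feeds in the orbits $t \mapsto \exp(tX)x$ for $X$ ranging over a generating set of $\Lie \rmH$ with bounded norm; the uniform nondivergence from Theorem \ref{thmMain} rules out the degenerate case — indeed it guarantees that \emph{some} point of $\rmH x$ already lies in a fixed compact set $C_{\bmH}$, which combined with the quantitative estimate forces a \emph{uniform} such set. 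Concretely: the quantitative estimate gives a dichotomy with a uniform compact set; Theorem \ref{thmMain} excludes the "bad" alternative for every $\bmH\in\calH$; hence the "good" alternative — membership in the uniform $C$ — holds for every $\bmH \in \calH$ and every $x$.

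The main obstacle I anticipate is precisely verifying the hypotheses of \cite[Theorem 1.1]{RicSha18} uniformly over $\calH$ — that is, checking that the orbits $\rmH x$ can be presented as polynomial trajectories of bounded degree and bounded "$(C,\alpha)$-good" constants with the bounds depending only on $\bmG$ (and $\Gamma$), so that the single compact set $C_\epsilon$ produced by that theorem genuinely works for all $\bmH$ simultaneously. Once this uniform presentation is in place, the contradiction argument closes: if $\rmH_n x_n \cap C = \emptyset$ for the uniform $C$ and all $n$, the quantitative estimate forces each trajectory into a proper $\Q$-subvariety, contradicting the conclusion of Theorem \ref{thmMain} applied to $\bmH_n$. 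A secondary technical point is ensuring measurability/density so that ``$\rmH x$ avoids $C$'' is incompatible with ``a positive-proportion subset of a generating trajectory lies in $C$''; this is routine since $\rmH$ acts continuously and $C$ is closed.
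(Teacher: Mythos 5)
Your sketch identifies the right external input (\cite[Theorem 1.1]{RicSha18} and quantitative nondivergence) but misses the two structural steps that make the paper's argument non-circular, and as written the plan begs the question. The paper begins with a finiteness-up-to-conjugacy reduction: there are only finitely many isomorphism classes of real semisimple Lie algebras of bounded dimension, the subalgebras of $\frakg$ isomorphic to a given one lie in finitely many $\rmG$-orbits by \cite[Lemma A.1]{EinMarVen09}, and maximal $\R$-split tori of a fixed centralizer are conjugate; this produces representatives $\bmH_1,\dots,\bmH_s \in \calH$, and applying Theorem \ref{thmMain} to each yields a single compact set $C$. You considered a limiting/compactness argument over $\calH$ and correctly rejected it because limits need not remain in $\calH$ — but the repair is finiteness modulo $\rmG$-conjugation, not sequential compactness of $\calH$, and your sketch abandons the idea rather than fixing it.

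More seriously, your mechanism for upgrading the per-$\bmH$ compact sets $C_{\bmH}$ to a uniform $C$ via quantitative nondivergence is circular. The Kleinbock--Margulis / Eskin--Mozes--Shah estimate produces a uniform output compact set only under a uniform input hypothesis: the trajectory must pass through a fixed compact set, i.e.\ one needs a uniform lower bound on the covolumes of all $\Gamma$-rational flags at some point of the trajectory. Knowing that some point of $\rmH x$ lies in $C_{\bmH}$ verifies this only with a constant depending on $\bmH$, so the output compact set is again $\bmH$-dependent; you have assumed the uniformity you are trying to prove. The paper escapes this by combining the finiteness reduction (so the ``starting'' compact set $C$ is genuinely fixed) with the concrete content of \cite[Theorem 1.1]{RicSha18}: a decomposition $\rmG = Y_i K_i \rmS_i$ with $K_i$ compact, $K_i\rmS_i = Z_{\rmG}\rmH_i$, together with a uniform non-contraction estimate along $y\Omega_i$ for $y \in Y_i$. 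For a general $\bmH = g\bmH_i g^{-1}$ and $x_0$, one first applies Theorem \ref{thmMain} for $\bmH_i$ with the fixed $C$ to replace $g^{-1}x_0$ by a point $x_0'\in C$, so that $\rmH x_0 = g\rmH_i x_0'$; then one writes $g = yks$ and uses that $k$ centralizes $\rmH_i$ and $s\in\rmS_i\subset\rmH_i$ to get $g\rmH_i x_0' = y\rmH_i(kx_0')$ with $kx_0'\in K_iC$, a fixed compact set; finally the quantitative estimate for $y\Omega_i$ acting on $K_iC$ lands in a fixed $C''$. Your proposal cites Richard--Shapira but never uses the $Y_i K_i \rmS_i$ decomposition, which is precisely what factors the conjugating element $g$ out of the problem; without it and without the finiteness reduction the argument does not close.
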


\subsubsection{Example}
Let $\bmG=\SL_4(\R)$, $\Gamma=\SL_4(\Z)$ and 
\[\bmS:= \left\{\left[
\begin{array}{cccc}
     t^3& && \\
     & t^{-1}&& \\
     & &t^{-1}& \\
     & & &t^{-1}
\end{array}
\right]\right\},\quad
\bmM :=
\left[
\begin{array}{cc}
     1&  \\
     & \bmM_0
\end{array}
\right]
\]
where $\bmM_0$ is $\SO(2,1)\subset \SL_3$. Let $\bmH:=\bmS\cdot \bmH$. Then it is not hard to check that our theorem applies and hence the action of $\rmH$ on $\SL_4(\R)/\SL_4(\Z)$ is uniformly non-divergent. To get a better result, consider 
\[\bmB := \left[
\begin{array}{cc}
     1&  \\
     & \bmB_0
\end{array}
\right]\]
where $\bmB_0$ is a Borel subgroup of $\SO(2,1)$. Let $\bmF:= \bmS \cdot \bmB$, then the epimorphic closure of $\bmF$ in $\SL_4$ is $\bmH$ (because $\bmB_0$ is a parabolic subgroup of $\SO(2,1)$, see \cite{Grosshans97} for details). Hence the action of $\rmF$ on $\SL_4(\R)/\SL_4(\Z)$ is also uniformly non-divergent. Using Lie algebras, it is not hard to show that no proper connected Lie subgroup of $\rmF$ has this property.

\subsection{Geometric consequences}

Let $X$ be an arithmetic quotient of a symmetric space of noncompact type. Let $\calT\calG^{N}$ be the space of embedded totally geodesic finite-volume submanifolds in $X$ of dimension $N$. Equip $\calT\calG^{N}$ with the Chabauty topology. Let $\calT\calG^{N,\rigid}$ be those that do not admit nontrivial deformation in $X$. More precisely, $Y \in \calT\calG^{N,\rigid}$ iff $\{Y\}$ is open in   $\calT\calG^{N}$. We have the following finiteness result generalizing \cite[Theorem 1.1]{OhETDS2004}.

\begin{thm}\label{thmGeometric}
For every natural number $N$ and positive number $T>0$, the set 
\begin{equation*}
     \calT\calG^{N,\rigid}_{\leq T}:=   
    \left\{ Y \in \calT\calG^{N,\rigid}\;\middle\vert\;
    \Vol(Y)\leq T
    \right\}
\end{equation*}
is finite.
\end{thm}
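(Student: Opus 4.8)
The plan is to recast the statement as a finiteness statement about closed orbits in $\rmG/\Gamma$, feed it into the uniform non-divergence results of the preceding sections, and conclude with reduction theory. \textbf{Step 1: from submanifolds to closed orbits.} Every complete finite-volume totally geodesic submanifold $Y\subset X$ of dimension $N$ arises, after translating a lift, from the following data: a connected reductive $\R$-subgroup $\bmH\le\bmG$ stable under the Cartan involution whose fixed group is $\rmK$, with $\dim\rmH/(\rmH\cap\rmK)=N$, and a point $x_0\in\rmG/\Gamma$, such that the orbit $\rmH\cdot x_0$ is closed, carries a finite $\rmH$-invariant measure of total mass comparable to $\Vol(Y)$ up to a factor bounded in terms of $\dim\bmG$ alone, and projects onto $Y$ in $X$. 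This is the classical dictionary between totally geodesic submanifolds of locally symmetric spaces and closed reductive orbits via Lie triple systems; reductivity of $\bmH$ and maximality of $\rmH\cap\rmK$ in $\rmH$ are forced by stability under the Cartan involution, and finiteness of $\Vol(Y)$ is equivalent to $\Gamma\cap\bmH$ being a lattice in $\bmH$.

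\textbf{Step 2: rigidity is a structural condition on $\bmH$.} The key point is that $Y$ is rigid, i.e. $\{Y\}$ is open in $\calT\calG^{N}$, if and only if $\bmZ_{\bmG}\bmH$ is $\R$-anisotropic modulo $\bmZ(\bmH)$. For the implication we need, suppose $\frakz_{\frakg}(\frakh)/\frakz(\frakh)$ contains a noncompact direction represented by $\xi$ (which one may take in the $(-1)$-eigenspace of the Cartan involution, transverse to $\frakz(\frakh)$ hence to $\frakh$). Conjugating the orbit $\rmH\cdot x_0$ by $\exp(t\xi)$ fixes $\bmH$ and moves $x_0$ transversally to the orbit; for small $t\neq 0$ this produces a distinct closed orbit, carrying the same lattice $\Gamma\cap\bmH$ and hence the same volume, whose image in $X$ is a distinct finite-volume totally geodesic submanifold of the same volume. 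Thus rigidity fails, so rigidity forces the stated anisotropy condition. (The converse, not needed here, follows from infinitesimal rigidity of the Lie triple system since $\bmH$ is reductive, together with Zariski-density of $\Gamma\cap\bmH$ in the semisimple part of $\bmH$, which force all deformations of $\rmH\cdot x_0$ into the now-compact centralizer directions.)

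\textbf{Step 3: uniform non-divergence, then finiteness.} Discard the compact factors of $\bmH$, which lie in a conjugate of $\rmK$ and affect neither $Y$ nor the non-divergence behaviour, and write the remainder as $\bmH_1\cdot\bmH_2$, where $\bmH_1$ gathers the $\R$-split simple factors together with the $\R$-split part of the center (a connected reductive $\R$-split subgroup) and $\bmH_2$ gathers the remaining noncompact simple factors (semisimple without compact factors). By Step 2 the subgroup $\bmH_1$ lies in the family $\calH$ of Theorem \ref{thmUniform}, and $\bmH_2$, after being enlarged by its centralizer (now compact), is covered by the Dani--Margulis non-divergence criterion \cite{DanMar91}; combining the corresponding uniform statements yields one compact set $C\subset\rmG/\Gamma$, independent of $Y$, with $\rmH\cdot x_0\cap C\neq\emptyset$. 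Replacing $x_0$ by a point of $\rmH\cdot x_0$ in $C$, we may assume $x_0\in C$, so a lift $g_0$ of $x_0$ lies in a fixed compact set $\mathfrak{C}\subset\rmG$. Since $\rmH\cdot x_0$ is closed of finite volume, $g_0^{-1}\bmH g_0$ is defined over $\Q$ (with $\Q$-anisotropic central torus) by Borel density and arithmeticity of $\Gamma$, and the bound $\Vol(Y)\le T$ bounds the arithmetic complexity of this $\Q$-subgroup. By reduction theory, in the spirit of the finiteness arguments of \cite{TomWei03} and \cite{OhETDS2004}, there are finitely many such $\Q$-subgroups up to $\Gamma$-conjugacy, and for each of them only finitely many closed orbits of volume $\le c\,T$ (equivalently, such closed orbits neither accumulate nor escape to infinity). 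Hence only finitely many orbits $\rmH\cdot x_0$ arise, and as each determines its image $Y$, the set $\calT\calG^{N,\rigid}_{\le T}$ is finite.

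\textbf{Main obstacle.} The crux is the rigidity translation in Step 2: turning the purely topological hypothesis that $\{Y\}$ is open into the algebraic statement about $\bmZ_{\bmG}\bmH$. This demands control of the entire local deformation space of a finite-volume totally geodesic submanifold -- that the underlying Lie triple system does not deform, that arithmeticity of $\Gamma\cap\bmH$ annihilates the remaining cohomological obstructions, and that every noncompact centralizer direction genuinely yields a nearby, distinct, equal-volume totally geodesic submanifold. A secondary difficulty is ensuring the non-divergence input in Step 3 is uniform over the infinite family of subgroups $\bmH$ that can occur, which is exactly what Theorem \ref{thmUniform} provides; the passage to the $\Q$-structure and the reduction-theoretic finiteness are then fairly routine.
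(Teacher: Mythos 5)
Your overall skeleton (rigidity \(\Rightarrow\) compactness of the centralizer modulo the center \(\Rightarrow\) non-divergence \(\Rightarrow\) finiteness via reduction theory) matches the paper's, and Steps~1 and~2 are essentially Lemma~\ref{lemmaRigidisAlgebraic} and the first half of Proposition~\ref{propRigidChar}. But Step~3 has a concrete gap.

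You write \(\bmH=\bmH_1\cdot\bmH_2\) with \(\bmH_1\) the \(\R\)-split part and \(\bmH_2\) the remaining noncompact semisimple factors, and claim ``by Step~2 the subgroup \(\bmH_1\) lies in the family \(\calH\) of Theorem~\ref{thmUniform}.'' This is false whenever \(\bmH_2\neq 1\): the centralizer \(\bmZ_{\bmG}\bmH_1\) contains \(\bmH_2\), which is noncompact, so \(\bmZ_{\bmG}\bmH_1/\bmZ(\bmH_1)\) is not \(\R\)-anisotropic and \(\bmH_1\notin\calH\). Symmetrically, \(\bmZ_{\bmG}\bmH_2\) contains \(\bmH_1\), so the Dani--Margulis criterion (via \cite[Lemma~3.2]{EinMarVen09}, which needs \(\bmZ_{\bmG}\bmH_2\) finite) does not apply to \(\bmH_2\) either. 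Even granting separate uniform non-divergence for the two pieces, ``combining the corresponding uniform statements'' is not a formal operation: pushing into a compact set along \(\rmH_1\) and then along \(\rmH_2\) does not keep you in the first compact set. Producing non-divergence for the whole \(\rmH=\rmH_1\rmH_2\) is precisely the content of Theorem~\ref{thmMain} and is where the real work of Sections~2--3 lies; it cannot be recovered by feeding the factors into two black boxes.

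A second, smaller issue: invoking Theorem~\ref{thmUniform} here is unnecessary and introduces the burden of uniformity over an infinite family, which your argument then has to control. The paper sidesteps this via Lemma~\ref{lemmaRigidisAlgebraic}(3): up to \(\rmK_0\)-conjugacy there are only finitely many \(\iota_0\)-stable subalgebras \(\frakh_1,\dots,\frakh_s\) satisfying the rigidity condition (this is the classification of real reductive subalgebras combined with \cite[Lemma~A.1]{EinMarVen09}), so one fixed compact set obtained from Theorem~\ref{thmMain} applied to each \(\rmH_i\) suffices. That finite list is also what makes the volume comparison of Lemma~\ref{lemmaVolBounded} uniform --- your claim that the ratio \(\Vol(Y)/\Vol(\rmH x_0)\) is ``bounded in terms of \(\dim\bmG\) alone'' needs this, since the ratio is \(\Vol(\rmK_0\cap\rmH)\), which a priori depends on \(\rmH\). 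Finally, the closing finiteness step should be pinned down to a specific statement: the paper uses \cite[Theorem~5.1]{DanMar93}, which gives finiteness of \(\{g_Y^{-1}\rmH_i g_Y\cap\Gamma\}\) once the volumes and the conjugating elements \(g_Y\) are confined to a compact set, together with the uniqueness (item~2 of Lemma~\ref{lemmaRigidisAlgebraic}) of the totally geodesic orbit of each such subgroup in \(\rmK_0\backslash\rmG\) to recover \(Y\) from its stabilizer lattice.
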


\subsection{Organizations}
In section \ref{secNondivUnimodular} we prove Theorem \ref{thmMain} in the special case of unimodular lattices. The general case is treated in section \ref{sectNondivGeneral}. Theorem \ref{thmUniform} and \ref{thmEpi} are proved in section \ref{secProofThmGeneralized}. In the last section \ref{secGeometric} we prove Theorem \ref{thmGeometric} and in Proposition \ref{propRigidChar} we give a characterization of rigid totally geodesic submanifolds in terms of Lie algebras.

\section{Non-divergence in the space of unimodular lattices}\label{secNondivUnimodular}

In this section we provide a proof of Theorem \ref{thmMain} in the special case when $\bmG=\SL_N$ and $\Gamma$ is commensurable with $\SL_N(\Z)$.  Compared to the general case to be treated in section \ref{sectNondivGeneral}, the proof here is more elementary (though still relies on the non-divergence criterion of Dani--Margulis) and does not rely on \cite{DawGoroUllLi19}. Yet it still illustrates some key ideas also appearing in the general case.

Without loss of generality assume $\Gamma=\SL_N(\Z)$ and 
identify $\SL_N(\R)/\SL_N(\Z)$ as the space of unimodular lattices of $\R^N$.
Fix a reductive subgroup $\bmH$ of $\SL_N$ over $\R$ such that $\bmZ_{\SL_N}\bmH/\bmZ(\bmH)$ is $\R$-anisotropic. Write $\bmH=\bmS\cdot \bmM$ as an almost direct product of an $\R$-split torus and an $\R$-split semisimple group $\bmM$.

Let $\R^N=\bigoplus_{i\in \calA_0} V_i$ be a decomposition of $\R^N$ into $\R$-irreducible representations of $\rmH$.

\begin{lem}
For distinct ${i_1,i_2\in\calA_0}$, $V_{i_1}$ and $V_{i_2}$ are not isomorphic as representations of $\rmH$.
\end{lem}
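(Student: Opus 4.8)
The plan is to argue by contradiction. Suppose that for some distinct $i_1,i_2\in\calA_0$ there is an isomorphism $\phi\colon V_{i_1}\to V_{i_2}$ of $\rmH$-representations over $\R$. From $\phi$ I would manufacture a nontrivial unipotent one-parameter subgroup of $\SL_N(\R)$ centralizing $\bmH$ whose image in $\bmZ_{\SL_N}\bmH/\bmZ(\bmH)$ is still nontrivial; this contradicts the standing hypothesis that $\bmZ_{\SL_N}\bmH/\bmZ(\bmH)$ is $\R$-anisotropic.

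To build the subgroup, let $\pi_{i_1}\colon\R^N\to V_{i_1}$ be the projection and $\iota_{i_2}\colon V_{i_2}\to\R^N$ the inclusion attached to the decomposition $\R^N=\bigoplus_{i\in\calA_0}V_i$; both are $\rmH$-equivariant. Set $N_0:=\iota_{i_2}\circ\phi\circ\pi_{i_1}$. Then $N_0$ is a nonzero $\rmH$-equivariant endomorphism of $\R^N$ with $N_0^2=0$, the latter because $\operatorname{im}N_0\subseteq V_{i_2}\subseteq\ker N_0$ (this is exactly where $i_1\neq i_2$ is used). Consequently $t\mapsto u_t:=\id+tN_0$ is a homomorphism from $\R$ into $\SL_N(\R)$ with nontrivial unipotent image $U$, and since $N_0$ is defined over $\R$, $U$ is the set of real points of a one-dimensional unipotent $\R$-subgroup $\bmU$ of $\SL_N$. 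As $N_0$ commutes with $\rmH$, so does $\bmU$; and since $\rmH$ is Zariski dense in $\bmH$, this gives $\bmU\subseteq\bmZ_{\SL_N}\bmH$.

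To conclude, consider the quotient morphism $p\colon\bmZ_{\SL_N}\bmH\to\bmQ:=\bmZ_{\SL_N}\bmH/\bmZ(\bmH)$, which is legitimate since $\bmZ(\bmH)$ is centralized by all of $\bmZ_{\SL_N}\bmH$ and hence is central, in particular normal, in it. The image $p(\bmU)$ is a quotient of the one-dimensional unipotent group $\bmU$, so it is trivial or again one-dimensional unipotent; in the second case $\bmQ(\R)$ would contain a noncompact subgroup, contradicting $\R$-anisotropy of $\bmQ$. Hence $p(\bmU)$ is trivial, i.e.\ $\bmU\subseteq\bmZ(\bmH)$. But $\bmZ(\bmH)$ is the center of a connected reductive group, hence of multiplicative type, and therefore contains no nontrivial unipotent element; this forces $N_0=0$, the desired contradiction.

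The argument is entirely soft, so I do not expect a genuine obstacle. The only points needing a sentence of care are the routine identification of real points under $p$ and the remark that centralizing the analytic identity component $\rmH$ is the same as centralizing $\bmH$ (by Zariski density of $\rmH$ in $\bmH$). The conceptual heart is just that a repeated $\R$-irreducible constituent of $\R^N$ contributes a unipotent direction to $\bmZ_{\SL_N}\bmH$ that cannot be absorbed into the multiplicative-type center $\bmZ(\bmH)$, which is incompatible with the anisotropy hypothesis.
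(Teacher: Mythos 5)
Your argument is correct and follows essentially the same route as the paper's: construct a nontrivial unipotent one-parameter subgroup of $\SL_N(\R)$ centralizing $\rmH$ from a hypothetical isomorphism between two of the $V_i$'s, then derive a contradiction from the standing hypothesis that $\bmZ_{\SL_N}\bmH/\bmZ(\bmH)$ is $\R$-anisotropic. The only difference is in the final step: the paper asserts (somewhat tersely) that the noncompact subgroup $\{v_s\}$ ``has to be contained in $\rmH$'' and then notes it fails to preserve $V_{i_2}$, whereas you push the subgroup all the way into $\bmZ(\bmH)$ (via the fact that unipotents have trivial image in a compact quotient) and then invoke that the center of a connected reductive group is of multiplicative type, hence unipotent-free. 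Your version is arguably a bit tighter, as it makes explicit the mechanism behind the paper's ``hence contained in $\rmH$'' and lands on a contradiction that does not depend on remembering that $\rmH$-elements preserve the decomposition.
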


\begin{proof}
Assume otherwise that $\psi:V_{i_2}\to V_{i_1}$ gives an $\rmH$-equivariant isomorphism. 
Consider  for $s\in \R$
\begin{equation*}
     \left[
    \begin{array}{cc}
       1  & s \psi \\
        0 & 1
    \end{array} \right]
\end{equation*}
as operators on $V_{i_1} \oplus V_{i_2}$. And write $v_s$ for the image of its embedding in $\SL(V)$ by asking that it acts as identity on $V_i$'s for $i\neq i_1, i_2$. Then $\{v_s\}_{s\in\R}$ is a noncompact subgroup of $\rmG$ centralizing $\rmH$, and hence has to be contained in $\rmH$. But it does not preserve $V_{i_2}$, which is a contradiction.
\end{proof}

In light of this lemma, $\rmS$ can be described more concretely. On the one hand, every $s\in \rmS$ acts as a positive scalar $s_i$ when restricted to each $V_i$. On the other hand, if $s\in\rmG$ acts as a positive scalar when restricted to each $V_i$, then $s\in \rmS$ because it centralizes $\rmH$, is $\R$-diagonalizable and also is connected to the identity via some one-parameter flow.


Let $\norm{\cdot}$ be the standard Euclidean metric on $\R^N$ and by abuse of notation also the induced metrics $\wedge^i\R^N$ for all $i$'s.
For a lattice $\Lambda \leq \R^N$, an $\R$-linear subspace (will be abbreviated as an $\R$-subspace) $W$ of $\R^N$ is said to be $\Lambda$-rational iff $\Lambda\cap W \leq W$ is a lattice, in which case we let $\Lambda_W:=\Lambda\cap W \leq W$ and let $\norm{\Lambda_W}$ denote the volume of $W/\Lambda_W$.
If $v_1,...,v_k$ is a set of $\Z$-basis of $\Lambda_W$ then $\norm{\Lambda_W}= \norm{v_1\wedge...\wedge v_k}$.
A subspace $W$ is said to be $(\rmM,\Lambda)$-\textbf{eligible} iff $W$ is both $\rmM$-stable and $\Lambda$-rational.

Let $\delta_{\rmM}: \rmM\backslash \SL_N(\R)/\SL_N(\Z) \to (0,\infty)$ be defined by 
\begin{equation*}
    \delta_{\rmM}([\Lambda]):= 
    \inf \left\{
     \norm{\Lambda_W}^{\frac{1}{\dim W}} \:\middle\vert\:
     W \text{ is } (\rmM,\Lambda)\text{-eligible}
    \right\}.
\end{equation*}
When $\bmM=\bmH$, $\delta_{\rmM}([\Lambda])$ is always equal to $1$. When $\rmM=\{e\}$ is the trivial group, write $\delta:=\delta_{\rmM}$. By Mahler's criterion, to prove Theorem \ref{thmMain}, it suffices to show that there exists $\eta>0$ such that for every $\Lambda\in \SL_N(\R)/\SL_N(\Z)$, there exists some $h\in \rmH$ such that $\delta(h\Lambda)>\eta$. This would in turn follow from the following proposition by \cite[Theorem 4.6]{DawGoroUll18} which is based on the work of \cite{DanMar91} (see also \cite[Corollary 3.3, Theorem 3.4]{Kle10}).

\begin{prop}\label{prop_pushoutSL_N}
There exist $0<\eta_0<1$ and $C>1$ such that for all $[\Lambda]\in  \rmM\backslash \SL_N(\R)/\SL_N(\Z) $ with $\delta_{\rmM}([\Lambda])<\eta_0$, there exists $s\in S$ such that $\delta_{\rmM}([s\Lambda])\geq C \delta_{\rmM}([\Lambda])$. As a result, there exists $s\in S$ such that $\delta_{\rmM}([s\Lambda])\geq \eta_0$.
\end{prop}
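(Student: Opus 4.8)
The plan is to adapt Margulis's method of pushing a lattice off the cusp along an $\R$-split torus (\cite[Appendix]{TomWei03}); the new ingredient is that, although $\rmS$ is in general much smaller than a maximal $\R$-split torus of $\bmG$, the centralizer hypothesis forces $\rmS$ to act on $\R^N$ through the \emph{full} torus of admissible block-scalings, and that turns out to be exactly what is needed.

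\emph{Step 1: reduction and the role of the hypothesis.} Since every $s\in\rmS$ centralizes $\rmM$, a subspace $W$ is $(\rmM,s\Lambda)$-eligible iff $s^{-1}W$ is $(\rmM,\Lambda)$-eligible, and then $\norm{(s\Lambda)_W}=\norm{s\cdot\Lambda_{s^{-1}W}}$; hence $\delta_\rmM([s\Lambda])=\inf\{\norm{s\Lambda_U}^{1/\dim U}: U\ (\rmM,\Lambda)\text{-eligible}\}$, and the task is to find $s\in\rmS$ making every term at least $C\,\delta_\rmM([\Lambda])$. I would also record two consequences of the setup. By the Lemma, $\R^N=\bigoplus_i V_i$ is multiplicity-free as an $\rmH$-module, so every $\rmH$-submodule of $\R^N$ is a partial sum $\bigoplus_{i\in B}V_i$. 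By the centralizer hypothesis (as observed after the Lemma) $\rmS$, acting on $\R^N$, realizes every tuple of positive scalars $(s_i)_{i\in\calA_0}$ on the $V_i$ subject only to $\prod_i s_i^{\dim V_i}=1$; equivalently, writing $s=\exp(\vec\ell)$, the available directions $\vec\ell$ fill the entire hyperplane $\mathfrak s=\{\vec\ell\in\R^{\calA_0}:\sum_i(\dim V_i)\ell_i=0\}$.

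\emph{Step 2: weights, and that no proper eligible subspace is $\rmS$-rigid.} For eligible $U$ with $k=\dim U$ and $\omega_U=v_1\wedge\cdots\wedge v_k$ for a $\Z$-basis of $\Lambda_U$, decompose $\wedge^k\R^N$ into the $\rmS$-weight spaces $\bigotimes_i\wedge^{m_i}V_i$ (using an inner product making the $V_i$ orthogonal, which affects only the final constants), on which $s=\exp(\vec\ell)$ acts by $e^{\langle\vec m,\vec\ell\rangle}$. Writing $\omega_U=\sum_{\vec m}\omega_U^{\vec m}$ gives $\norm{s\Lambda_U}\geq\max_{\vec m}e^{\langle\vec m,\vec\ell\rangle}\norm{\omega_U^{\vec m}}$, and some dominant weight satisfies $\norm{\omega_U^{\vec m}}\geq 2^{-N/2}\norm{\omega_U}$. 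Now suppose $U$ is \emph{proper}. If $\omega_U$ is a single weight vector, then $U$ is $\rmS$-stable, hence (being also $\rmM$-stable) $\rmH$-stable, so by Step 1 $U=\bigoplus_{i\in B}V_i$ with $\emptyset\subsetneq B\subsetneq\calA_0$; then $\norm{\exp(\vec\ell)\Lambda_U}=\exp\!\big(\sum_{i\in B}(\dim V_i)\ell_i\big)\norm{\Lambda_U}$, and the linear form $\vec\ell\mapsto\sum_{i\in B}(\dim V_i)\ell_i$ is not identically $0$ on $\mathfrak s$. If instead $\omega_U$ involves at least two weights, then, as at most one weight of $\wedge^k\R^N$ is proportional to $(\dim V_i)_i$, the weights appearing in $\omega_U$ are not all orthogonal to $\mathfrak s$. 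In either case there is a ray $\{\exp(t\vec\ell_U)\}_{t\geq 0}$ in $\rmS$ along which $\norm{\exp(t\vec\ell_U)\Lambda_U}\to\infty$; this is where the centralizer hypothesis is consumed.

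\emph{Step 3: the pushing, which is the main obstacle.} Put $\beta=\delta_\rmM([\Lambda])<\eta_0<1$ and choose an eligible $W^*\neq\{0\}$ with $\norm{\Lambda_{W^*}}^{1/\dim W^*}$ within a fixed factor of $\beta$; since $\norm{\Lambda_{\R^N}}=1$ and $\eta_0$ is small, $W^*$ is proper, so Step 2 supplies $\vec\ell_0\in\mathfrak s$ expanding it. One pushes along $\exp(t\vec\ell_0)$ until either every eligible $U$ satisfies $\norm{\exp(t\vec\ell_0)\Lambda_U}^{1/\dim U}\geq C\beta$ (done) or some eligible $U$ first drops below $C\beta$; in the latter case one replaces the pair by $U\cap W^*$ and $U+W^*$, which are again eligible and obey the submodular bound $\norm{\Lambda_{U\cap W^*}}\cdot\norm{\Lambda_{U+W^*}}\leq\norm{\Lambda_U}\cdot\norm{\Lambda_{W^*}}$, and continues with the ``more critical'' one; strict monotonicity of dimension makes this terminate, organizing the binding subspaces into a flag $\{0\}\subsetneq W_1\subsetneq\cdots\subsetneq W_r\subsetneq\R^N$ along which one pushes in succession. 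A priori bounds from Minkowski's theorem together with the normalization $\norm{\Lambda}=1$ keep the non-binding eligible subspaces — whose covolume already exceeds the target by the required factor — safe against a pushing element of size $O(\log(1/\beta))$, and it is the smallness $\beta<\eta_0$ that leaves room for this. The hard part is to run this Margulis-type induction \emph{quantitatively} and \emph{using only directions in $\mathfrak s$}: one must choose the successive pushes so that they are simultaneously admissible along the whole flag and each contributes the uniform gain factor $C$, and this is precisely where Step 2 is invoked at each stage. Finally, iterating the resulting ``$\beta\mapsto C\beta$'' step finitely many times starting from $[\Lambda]$ produces some $s\in\rmS$ with $\delta_\rmM([s\Lambda])\geq\eta_0$, the second assertion.
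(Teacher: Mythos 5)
Your Steps 1 and 2 are sound, and Step 2 is in fact an alternate route to the content of the paper's Lemma~\ref{lemExpand_SL_N}: where the paper produces, for each proper $\rmM$-stable $W$, an index set $I\subset\calA_0$ with $\pi_I\vert_W:W\to V_I$ bijective and then dilates $V_I$, you instead examine the $\rmS$-weights of $\omega_U$ in $\wedge^k\R^N$ and use the fact that not all weights are proportional to $(\dim V_i)_i$ to find an expanding direction in $\mathfrak{s}$. Both rest on the same multiplicity-freeness observation and the same identification of $\rmS$ with the full hyperplane of admissible block-scalings; this part of your argument is compatible with the paper's, though to get the \emph{uniform} constants the paper needs (a finite $\calF\subset S$ and fixed $C_1,C_2$, valid simultaneously for all pure wedges supported inside a given $W$) you would still need to compactify over the Grassmannian of $\rmM$-stable subspaces, which you do not spell out.

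The real gap is Step 3, and you essentially acknowledge it yourself: ``the hard part is to run this Margulis-type induction quantitatively and using only directions in $\mathfrak{s}$,'' and then you do not run it. That sentence is the entire theorem. Building a flag $\{0\}\subsetneq W_1\subsetneq\cdots\subsetneq W_r\subsetneq\R^N$ and pushing ``in succession'' along directions chosen per stage requires showing that the later pushes do not destroy the gains made by the earlier ones, and when the torus is not the full diagonal torus this is exactly where the naive Margulis iteration can fail: the admissible directions expanding $W_{j+1}/W_j$ may shrink $W_j$ by a factor that cancels the gain. Your proposal offers no mechanism to control this interplay, nor a quantitative accounting of why a composite $s\in\rmS$ of size $O(\log(1/\beta))$ leaves the ``safe'' eligible subspaces safe.

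The paper sidesteps the flag entirely. Lemma~\ref{lemProtect_SL_N} produces a \emph{single} proper eligible ``protector'' $W_\infty$ with the key submodular property: every eligible $W\not\subset W_\infty$ satisfies $\norm{\Lambda_W+\Lambda_{W_\infty}}\geq (C_1C_2)\norm{\Lambda_{W_\infty}}$. Then one chooses a single $s\in\calF$ (from Lemma~\ref{lemExpand_SL_N}, applied to $W_\infty$) that expands all pure wedges supported in $W_\infty$ by $C_2$ while shrinking nothing by more than $C_1$. The dichotomy in the proof of the proposition — either $s^{-1}W'\subset W_\infty$ (expansion applies directly) or $s^{-1}W'\not\subset W_\infty$ (the submodular inequality $\norm{\Lambda_{W\cap W_\infty}}\cdot\norm{\Lambda_{W+W_\infty}}\leq\norm{\Lambda_W}\cdot\norm{\Lambda_{W_\infty}}$ transfers the gain to the intersection) — replaces the iterative flag argument with a one-shot estimate. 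You should either adopt something like Lemma~\ref{lemProtect_SL_N}, or else actually carry out the inductive bookkeeping you gesture at; as written, your Step 3 is not a proof.
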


The proof of this proposition will be based on the two lemmas below.

\begin{lem}\label{lemExpand_SL_N}
There exist, and we fix, $C_1,C_2>1$ and a finite subset $\calF\subset S$ such that for every proper $\R$-subspace $W$ of $\R^N$ that is $\rmM$-stable, there exists $s\in \calF$ such that 
\begin{enumerate}
    \item $\norm{sv}>\frac{1}{C_1}\norm{v}$ for all pure wedges $v$ in $\R^N$;
    \item $\norm{sv}>C_2\norm{v}$ for all pure wedges $v$ with $\calL_v$ contained in $W$.
\end{enumerate}
\end{lem}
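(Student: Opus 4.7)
The plan is to pass to the $\rmS$-weight decomposition of the exterior algebra and reduce to a split-torus expansion problem for which the distinctness established in the preceding lemma is exactly the input required.

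Step 1 (setup and reduction). Replace the standard Euclidean inner product on $\R^N$ by an equivalent one under which the subspaces $V_i$ are pairwise orthogonal. The induced inner product on $\wedge^k\R^N$ then makes the decomposition
\[
\wedge^k\R^N \;=\; \bigoplus_{\alpha}E_\alpha, \qquad E_\alpha\;:=\;\bigotimes_i\wedge^{k_i}V_i,
\]
orthogonal, where $\alpha=(k_i)$ runs over tuples with $\sum k_i = k$ and $0\leq k_i\leq \dim V_i$; the torus $\rmS$ acts on $E_\alpha$ by the scalar $\chi^\alpha(s):=\prod_i\chi_i(s)^{k_i}$, with $\chi_i$ the character by which $\rmS$ acts on $V_i$. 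Writing $v=\sum_\alpha v_\alpha$ yields $\|sv\|^2=\sum_\alpha \chi^\alpha(s)^2\|v_\alpha\|^2$. Condition (1) is then automatic once $\calF$ is finite (set $C_1^{-1}:=\min_{s\in\calF,\alpha}\chi^\alpha(s)$), and condition (2) reduces to: for every proper $\rmM$-stable $W$, find $s\in\calF$ with $\chi^\alpha(s)>C_2$ for every $\alpha$ in
\[
\calA(W):=\bigl\{\alpha : \pi_\alpha(\wedge^{|\alpha|}W)\neq 0\bigr\},
\]
where $\pi_\alpha$ is the orthogonal projection onto $E_\alpha$.

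Step 2 (isotypic decomposition). Decompose $\R^N=\bigoplus_\tau(\R^N)_\tau$ into $\rmM$-isotypic components; $\rmM$-stability forces $W=\bigoplus_\tau W_\tau$ with $W_\tau\subseteq(\R^N)_\tau$, and properness of $W$ yields some $\tau$ with $W_\tau\subsetneq(\R^N)_\tau$. The summands $V_i\subset(\R^N)_\tau$ are mutually $\rmM$-isomorphic, so by the preceding lemma their $\rmS$-characters $\chi_i$ are pairwise distinct: an equality $\chi_i=\chi_j$ together with the $\rmM$-isomorphism would produce an $\rmH$-isomorphism $V_i\cong V_j$. Hence $\rmS$ acts on the multiplicity space of $\tau$ diagonally with distinct weights, and the lemma reduces within each isotypic piece to a standard wedge-expansion statement for a split torus with distinct characters, in the spirit of \cite[Theorem 4.6]{DawGoroUll18} and \cite{TomWei03}.

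Step 3 (assembling $\calF$). For each $\tau$ and each combinatorial type of proper subspace in the corresponding multiplicity space, choose a one-parameter subgroup of $\rmS$ under which $\chi^\alpha$ grows on the tuples associated with that type; since there are finitely many isotypic classes and finitely many combinatorial types on the Grassmannian, only finitely many such subgroups arise, and rescaling yields a single finite $\calF\subset\rmS$ with a common expansion bound $C_2>1$. The main obstacle is the coupling across isotypic components: a tuple $\alpha\in\calA(W)$ may be supported simultaneously in the proper isotypic piece $\calA_0^{\tau_0}$ and in full pieces $\calA_0^\tau$ for $\tau\neq\tau_0$. I would resolve this using the determinantal relation $\prod_i\chi_i(s)^{\dim V_i}=1$ forced by $\rmS\subset\SL_N$, which pays for any expansion on the proper part by contraction on its complement (which lies \emph{outside} $W$); the mixed tuples still receive a net expansion, yielding the required uniform constants $C_1,C_2$.
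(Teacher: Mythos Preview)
Your reduction in Step~1 is too strong and already fails in the simplest case. Take $\bmG=\SL_2$, $\bmM$ trivial, $\bmH=\bmS$ the diagonal torus, and $W=\mathrm{span}(e_1+e_2)$. Then $\calA(W)=\{(1,0),(0,1)\}$, and your condition demands $s_1>C_2$ and $s_2>C_2$ simultaneously, which is impossible since $s_1s_2=1$. Nevertheless condition~(2) of the lemma does hold here: $\norm{s(e_1+e_2)}^2=s_1^2+s_2^2\to\infty$ as $s_1\to\infty$. The point is that $\norm{sv}^2=\sum_\alpha\chi^\alpha(s)^2\norm{v_\alpha}^2>C_2^2\norm{v}^2$ does \emph{not} require $\chi^\alpha(s)>C_2$ for every $\alpha$ with $v_\alpha\neq0$; it suffices that some weight component carrying a definite fraction of the mass of $v$ is expanded enough, and which component that is depends on $v$, not merely on the support set $\calA(W)$. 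So the quantity you are trying to bound uniformly from below over $\calA(W)$ is simply the wrong invariant, and Steps~2--3 (which attempt to verify that bound via isotypic bookkeeping and the determinant relation) cannot succeed as written.

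The paper supplies exactly the structural input you are missing. For each $\rmM$-stable $W$ it constructs, by an elementary inductive peeling argument, a subset $I\subset\calA_0$ for which the $\rmH$-equivariant projection $\pi_I|_W:W\to V_I$ is a linear \emph{isomorphism}; the surjectivity step uses that each $V_i$ is already irreducible for $\rmM$. Thus $W$ is the graph of a linear map $\phi_W:V_I\to V_{I^c}$, so every $w\in W$ has $V_I$-component of norm at least $C_W^{-1}\norm{w}$. Choosing $s_W\in\rmS$ to act as $\lambda\cdot\id$ on $V_I$ with $\lambda>C_W^2$ then gives $\norm{s_Ww}>\norm{w}$ for all $w\in W$; this transfers to pure wedges because the minimum singular value of $s_W|_W$ exceeds $1$, hence so does that of $s_W$ restricted to any $\calL_v\subset W$. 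Finiteness of $\calF$ then comes from a compactness argument on the Grassmannian of $\rmM$-stable subspaces. Your observation that the characters $\chi_i$ are pairwise distinct within each $\rmM$-isotypic block is correct and is morally the same input, but you still need something like the graph-over-$V_I$ mechanism to convert it into an actual expanding element of $\rmS$.
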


By a pure wedge $v$, we mean some non-zero vector of the form $v=v_1\wedge...\wedge v_i$ in $\wedge^i\R^N$ for some $i$. For such a pure wedge, write $\calL_v$ for the $\R$-subspace of $\R^N$ spanned by $v_1,...,v_i$.

\begin{proof}
The first part comes for free as long as $\calF$ is a finite set. We shall focus on the second part.
It suffices to show that for each $\rmM$-stable subspace $W$, there exists $s_W\in S$ such that
\[
\norm{s_w\cdot w} > \norm{w},\quad \forall w\in W
\]
Then the same thing would be true replacing $w$ by any pure wedge $w$ with $\calL_w$ contained in $W$. A continuity argument applied to the unit vectors would then finish the proof.

Recall the decomposition of $\R^N= \bigoplus_{i\in \calA_0} V_i$ into irreducible representations with respect to $\rmH$. 
Also, for each $i$, every $s\in \rmS$ acts as $s_i\text{id}_{V_i}$, for some $s_i>0$, when restricted to $V_i$ .

For each $I\subset \calA_0$, define $\pi_I:\R^N \to V_I:=\bigoplus_{i\in I} V_i$ to be the corresponding  $\rmH$-equivariant projection. We claim that for any $\rmM$-stable $W$, there exists $I$ such that $\pi_I\vert_{W}$ is bijective.

Fix W and we define $I=\{i_1,...,i_k\}$ inductively. Firstly we pick $i_1\in\calA_0$ such that $\pi_{i_1}\vert_{W}\neq 0$ and let $W_1:=\ker (\pi_{i_1}\vert_{W})$. Secondly we pick $i_2\in\calA_0$ such that $\pi_{i_2}\vert_{W_1}\neq 0$ and let  $W_2:=\ker (\pi_{i_2}\vert_{W_1})$. Continue this until $\pi_i\vert_{W_k}=0$ for all $i\in\calA_0$, in which case $W_k=\{0\}$. So for $j\leq k$ we have the exact sequence
\begin{equation*}
   \begin{tikzcd}
        0\arrow[r]& W_{j_0}\arrow[r]& W_{j-1}\arrow[r] & \pi_{j_0}(W_{j-1})\arrow[r] &0.
   \end{tikzcd}
\end{equation*}

Now $\pi_I\vert_{W}$ is injective. Indeed if $w\in W$ is such that $\pi_I(w)=0$, then $w\in\cap_i W_i =\{0\}$.

The map $\pi_I\vert_{W}$ is also surjective. Write $W_0:=W$.
For each $i=1,...,k$, 
\[
\pi_i\vert_{W_{i-1}} \neq 0 \implies 
\pi_i\vert_{W_{i-1}} \text{ is surjective on }V_i 
\]
as $V_i$ is actually an irreducible representation with respect to $\rmM$ (though $V_i$ may be isomorphic to $V_{j_0}$ as $\rmM$-representations).  As $W_{i-1}$ is defined to vanish under $\pi_{i-1}$ (for $i>1$), an induction argument shows that $\pi_{\{i_1,...,i_l\}}\vert_{W}$ is surjective for $l=1,...,k$.

Once $\pi_I\vert_{W}:W\to V_I$ is bijective, we see that $W$ is the graph of some linear map $\phi_W:V_I \to V_{I^c}$. That is to say, for any $w\in W$, there exists a unique $v\in V_I$ such that $w=v+\phi_W(v)$.
Find $C_W>0$ such that 
\begin{equation*}
    \begin{aligned}
    &\norm{v+\phi_W(v)} \leq C_W \norm{v},\quad \forall v\in V_I.\\
    & \frac{1}{C_W}\norm{v} \leq \norm{v+v'} ,\quad \forall v\in V_I, \;v'\in V_{I^c}.\\
    \end{aligned}
\end{equation*}
Then we take $s_W\in S$ such that $s_w \vert_{V_I} =\lambda \id_{V_I} $ for some $\lambda>C_W^2$. Thus
\[
\norm{s_w\cdot w} \geq \frac{1}{C_W} 
\norm{\lambda v }
> C_W \norm{v} \geq \norm{v + \phi_W(v)} = \norm{w}.
\]
Now the proof is complete.
\end{proof}

\begin{lem}\label{lemProtect_SL_N}
For every $[\Lambda] \in \rmM\backslash \SL_N(\R)/\SL_N(\Z)$ with $\delta_{\rmM}([\Lambda])<\eta_0:=\frac{1}{(C_1C_2)^{N}}$, there exists a proper $(\rmM,\Lambda)$-eligible subspace $W_{\infty}\leq \R^N$ such that for any $(\rmM,\Lambda)$-eligible subspace $W$ not contained in $W_{\infty}$ we have 
\begin{equation*}
    \norm{\Lambda_W+\Lambda_{W_{\infty}}} \geq (C_1C_2)\norm{\Lambda_{W_{\infty}}}.
\end{equation*}
\end{lem}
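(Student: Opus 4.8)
The statement is asking for a "maximal" or "dominant" eligible subspace $W_\infty$ that, roughly, absorbs the smallness of $\delta_{\rmM}([\Lambda])$. This is a classical shadowing/protection phenomenon from the Dani–Margulis circle of ideas; the standard tool is the following submodularity inequality for covolumes of rational subspaces: for two $\Lambda$-rational subspaces $W_1, W_2$,
$$\norm{\Lambda_{W_1+W_2}}\cdot\norm{\Lambda_{W_1\cap W_2}}\le \norm{\Lambda_{W_1}}\cdot\norm{\Lambda_{W_2}}.$$
Moreover, the class of $(\rmM,\Lambda)$-eligible subspaces is closed under sum and intersection (being $\rmM$-stable is preserved by $+$ and $\cap$, and $\Lambda$-rationality is too, since $\rmM$-stable subspaces are cut out over a number field and one checks rationality is preserved — this needs a small argument, see below). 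So the poset of eligible subspaces is a sublattice of the lattice of $\Lambda$-rational subspaces, and we may look for a minimizer.

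Let me see. Concretely I would proceed as follows. First, among all proper $(\rmM,\Lambda)$-eligible subspaces $W$, consider the quantity $\norm{\Lambda_W}^{1/\dim W}$; since $\delta_{\rmM}([\Lambda])<\eta_0<1$ there is at least one with $\norm{\Lambda_W}^{1/\dim W}<\eta_0$, and since there are only finitely many eligible subspaces of each covolume below a bound and the dimensions are bounded by $N$, the infimum defining $\delta_{\rmM}$ is attained; moreover the set of eligible $W$ with $\norm{\Lambda_W}<1$ is finite. Among this finite nonempty collection, choose $W_\infty$ to be one of \emph{maximal dimension}, and among those of maximal dimension one of \emph{minimal covolume} $\norm{\Lambda_{W_\infty}}$. (One must also note $\Lambda_W \ne 0$ forces $\norm{\Lambda_W}\ge$ some positive lower bound depending only on $\Lambda$, but that is automatic since we only range over subspaces actually containing lattice points.) Now take any $(\rmM,\Lambda)$-eligible $W$ with $W\not\subseteq W_\infty$. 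Then $W+W_\infty$ is eligible, $W\cap W_\infty$ is eligible (or zero, in which case it has covolume $1$ by convention), $\dim(W+W_\infty)>\dim W_\infty$, and $W\cap W_\infty \subsetneq W$, so $W\cap W_\infty$ is a \emph{proper} eligible subspace. Submodularity gives
$$\norm{\Lambda_{W+W_\infty}}\cdot\norm{\Lambda_{W\cap W_\infty}}\le \norm{\Lambda_W}\cdot\norm{\Lambda_{W_\infty}}.$$
If $W+W_\infty$ were a proper subspace it would be eligible of strictly larger dimension than $W_\infty$, hence by maximality of $\dim W_\infty$ we'd need $W+W_\infty=\R^N$; but then $W+W_\infty$ is not proper and the inequality in the lemma concerns $\norm{\Lambda_W+\Lambda_{W_\infty}}=\norm{\Lambda_{W\cap W_\infty}}^{-1}\norm{\Lambda_W}\norm{\Lambda_{W_\infty}}\cdots$ — wait, I need to be careful about whether the lemma wants $W+W_\infty$ proper. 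Re-reading: the lemma's conclusion is $\norm{\Lambda_W+\Lambda_{W_\infty}}\ge (C_1C_2)\norm{\Lambda_{W_\infty}}$, where $\Lambda_W+\Lambda_{W_\infty}$ is the lattice generated, whose covolume in $W+W_\infty$ is $\norm{\Lambda_{W+W_\infty}}$ up to the index $[\Lambda_{W+W_\infty}:\Lambda_W+\Lambda_{W_\infty}]\ge 1$; so it suffices to prove $\norm{\Lambda_{W+W_\infty}}\ge (C_1C_2)\norm{\Lambda_{W_\infty}}$.

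So the real content is: for eligible $W\not\subseteq W_\infty$, $\norm{\Lambda_{W+W_\infty}} / \norm{\Lambda_{W_\infty}} \ge C_1C_2$. By submodularity this ratio is $\ge \norm{\Lambda_{W\cap W_\infty}}/\norm{\Lambda_W}$ times $\big(\norm{\Lambda_{W+W_\infty}}\norm{\Lambda_{W\cap W_\infty}}\big)\big/\big(\norm{\Lambda_W}\norm{\Lambda_{W_\infty}}\big)$... hmm, this isn't leading cleanly to the bound. The cleaner route: I'd argue by contradiction. Suppose $\norm{\Lambda_{W+W_\infty}}<(C_1C_2)\norm{\Lambda_{W_\infty}}$. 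Consider the chain $W_\infty \subsetneq W+W_\infty$. The point is to build from $W$ and $W_\infty$ a \emph{new} eligible subspace that beats $W_\infty$ in the chosen order (larger dimension, or equal dimension and smaller covolume), contradicting the extremal choice — or, if $W+W_\infty=\R^N$, to directly get a contradiction with $\delta_{\rmM}([\Lambda])<\eta_0$ via iterating the bound $C_1C_2$ over a flag of length $\le N$, which is exactly why $\eta_0=(C_1C_2)^{-N}$ appears. This last point — squeezing the exponent $N$ out of the telescoping of covolume ratios along a maximal flag of eligible subspaces, while controlling the "defect" subspaces $W\cap W_\infty$ inductively — is the step I expect to be the main obstacle, and it is where the precise bookkeeping (and the role of $C_1,C_2$ coming from Lemma \ref{lemExpand_SL_N}) has to be done carefully. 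I would model the induction on the standard Dani–Margulis "finding the right basis" argument: order eligible subspaces lexicographically by $(\dim, \norm{\Lambda_\bullet})$, take the maximal proper one, and show any competitor forces either a contradiction to maximality (via submodularity) or, when sums exhaust $\R^N$, a contradiction to $\delta_{\rmM}<\eta_0$ (via the $N$-fold iteration).

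Two ancillary facts I'd record first: (i) the intersection and sum of two $(\rmM,\Lambda)$-eligible subspaces are again $(\rmM,\Lambda)$-eligible or zero — $\rmM$-stability is obvious, and $\Lambda$-rationality of $W_1\cap W_2$ and $W_1+W_2$ follows from the standard fact that the lattice of $\Lambda$-rational subspaces is closed under $\cap$ and $+$; (ii) the submodular inequality for covolumes displayed above, which is elementary (it follows from $\norm{u\wedge v}\le \norm u\,\norm v$ applied to generators adapted to the filtration $W_1\cap W_2 \subseteq W_1 \subseteq W_1+W_2$). With these in hand the proof is the extremal-choice argument sketched above.
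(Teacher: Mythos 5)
Your proposal does not close. You assemble the right raw ingredients (dimension $\le N$, telescoping a $(C_1C_2)$-gap $N$ times to exploit $\eta_0=(C_1C_2)^{-N}$, the lattice of eligible subspaces being closed under $+$ and $\cap$), and you are right that these appear — but you explicitly flag that you cannot make the extremal-choice argument land, and indeed it does not as stated. Picking $W_\infty$ "of maximal dimension, then minimal covolume among eligible $W$ with $\norm{\Lambda_W}<1$" does not produce the multiplicative gap $(C_1C_2)$: when $W+W_\infty$ is still proper, maximality of dimension only tells you $\norm{\Lambda_{W+W_\infty}}\ge 1$, which is useless unless you already know $\norm{\Lambda_{W_\infty}}\le (C_1C_2)^{-1}$, and nothing in your selection guarantees this. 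And when $W+W_\infty=\R^N$, a single competitor does not by itself trigger an $N$-fold iteration; you would need a whole chain, but you never explain how your extremally-chosen $W_\infty$ supplies one.

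The paper's proof is shorter and sidesteps both difficulties by being \emph{constructive rather than extremal}: build a strictly increasing chain $W_1\subsetneq W_2\subsetneq\cdots$, where $W_1$ is any eligible subspace with $\norm{\Lambda_{W_1}}<\eta_0$, and $W_{j+1}:=W_j+W_j'$ whenever there is a "failure witness" $W_j'\not\subseteq W_j$ with $\norm{\Lambda_{W_j'}+\Lambda_{W_j}}<(C_1C_2)\norm{\Lambda_{W_j}}$. The chain is strictly increasing in dimension, so it stops at some $W_l$ with $l\le N$, and $W_\infty:=W_l$ satisfies the protection property \emph{by definition of termination} — no contradiction argument is needed. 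Properness of $W_\infty$ is then the only thing left, and it drops out of the telescoping
\begin{equation*}
\norm{\Lambda_{W_l}}\le\norm{\Lambda_{W'_{l-1}}+\Lambda_{W_{l-1}}}\le(C_1C_2)\norm{\Lambda_{W_{l-1}}}\le\cdots\le(C_1C_2)^{l}\norm{\Lambda_{W_1}}<(C_1C_2)^N\eta_0=1=\norm{\Lambda},
\end{equation*}
using only that $\Lambda_{W_j}+\Lambda_{W_j'}$ is a finite-index sublattice of $\Lambda_{W_{j+1}}$. In particular, submodularity is not used here at all (it is used later, in the proof of Proposition~\ref{prop_pushoutSL_N}), so bringing it into this lemma is a detour. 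If you want to salvage your write-up, replace the extremal-choice step with this greedy chain; the selection criteria you were trying to pin down become unnecessary.
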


\begin{proof}

Take such a $\Lambda$ as in the statement. Find an $(\rmM,\Lambda)$-eligible $\R$-subspace $W_1$  such that $\norm{\Lambda_{W_1}}<\eta_0$. If $W_1$ satisfies the conclusion above, then we take $W_{\infty}=W_1$. Otherwise there exists $W_1'\leq \R^N$, $(\rmM,\Lambda)$-eligible, such that 
\begin{equation*}
    \norm{\Lambda_{W'_1}+\Lambda_{W_{1}}} < (C_1C_2)\norm{\Lambda_{W_1}}.
\end{equation*}
Let $W_2:=W_1'+W_1$, then $W_2$ is still $(\rmM,\Lambda)$-eligible. If $W_2$ satisfies the conclusion, we stop. Otherwise we define $W'_2$ and $W_3$ similarly as above. As the dimension is finite, this process has to stop at some index $l$ not exceeding $N$. We let $W_{\infty}:=W_l$ and it only remains to show that $W_{\infty}$ is a proper subspace. Indeed $\Lambda_{W_i}+\Lambda_{W_i'}\leq \Lambda_{W_{i+1}}$, so
\begin{equation*}
    \norm{\Lambda_{W_l}} \leq \norm{\Lambda_{W'_{l-1}} + \Lambda_{W_{l-1}}}  \leq (C_1C_2)\norm{\Lambda_{W_{l-1}}} \leq ...\leq (C_1C_2)^{l} \norm{W_1} < 1.
\end{equation*}
As $\norm{{\R^N}}_{\Lambda}= \norm{\Lambda}=1$, $W_{\infty}\neq \R^N$. 
\end{proof}

Now finally we come to the

\begin{proof}[Proof of Proposition \ref{prop_pushoutSL_N} with $C:=C_2$]

Recall $\eta_0=1/(C_1C_2)^N$.
We take $[\Lambda]\in \rmM\bs\SL_N(\R)/\SL_N(\Z)$ such that $\delta_{\rmM}([\Lambda])<\eta_0$. By Lemma \ref{lemProtect_SL_N}, pick  a proper $(\rmM,\Lambda)$-eligible subspace $W_{\infty}\leq \R^N$ such that for any $(\rmM,\Lambda)$-eligible subspace $W$ not contained in $W_{\infty}$ we have 
\begin{equation*}
    \norm{\Lambda_W+\Lambda_{W_{\infty}}} \geq (C_1C_2)\norm{\Lambda_{W_{\infty}}}.
\end{equation*}
Then by applying Lemma \ref{lemExpand_SL_N} to $W_{\infty}$, we get some 
$s\in S$ such that 
\begin{enumerate}
    \item $\norm{sv}>\frac{1}{C_1}\norm{v}$ for all pure wedges $v$ in $\R^N$;
    \item $\norm{sv}>C_2\norm{v}$ for all pure wedges $v$ with $\calL_v$ contained in $W_{\infty}$.
\end{enumerate}

Now we prove our assertion with this $s\in S$.
It suffices to show that for every $(\rmM,s\Lambda)$-eligible $W'$, we have $\norm{\Lambda_{W'}}\geq C_2 \norm{\Lambda_{W''}}$ for some $(\rmM,\Lambda)$-eligible $W''$.

First let $W:=s^{-1}W'$, then $W$ is $(\rmM,\Lambda)$-eligible and $s\cdot \Lambda_W= (s\Lambda)_{W'}$. There are two cases to consider.

\textit{Case I}, $W\subset W_{\infty}$. Then
\begin{equation*}
    \norm{(s\Lambda)_{W'}} =
    \norm{s\Lambda_W} \geq C_2 \norm{\Lambda_W}.
\end{equation*}
So setting $W'':=W$ concludes the proof.

\textit{Case II}, $W\nsubseteq W_{\infty}$. Let $W'':=W\cap W_{\infty}$. Then $W''$ is $(\rmM,\Lambda)$-eligible and $\Lambda_{W''}=\Lambda_W\cap \Lambda_{W_{\infty}}$.
We have
\begin{equation*}
\begin{aligned}
        &\norm{\Lambda_{W''}} (C_1C_2) \norm{\Lambda_{W_{\infty}}}
    \leq \norm{\Lambda_W \cap \Lambda_{W_{\infty}}} \cdot \norm{\Lambda_W + \Lambda_{W_{\infty}}} 
    \leq \norm{\Lambda_W} \cdot \norm{\Lambda_{W_{\infty}}} \\
    &\implies \norm{\Lambda_W} \geq (C_1C_2) \norm{\Lambda_{W''}}.
\end{aligned}
\end{equation*}
So
\begin{equation*}
    \norm{(s\Lambda)_{W'}} \geq
    \frac{1}{C_1} \norm{\Lambda_W}
    \geq C_2 \norm{\Lambda_{W''}}
\end{equation*}
and we are done.

\end{proof}

\section{Non-divergence in the general case}\label{sectNondivGeneral}
In this section we prove Theorem \ref{thmMain} in general.

So let $\bmG$ be a  $\Q$-semisimple group of dimension $N$ and $\Gamma \leq \rmG$ be an arithmetic lattice. Let $\bmH \leq \bmG$ be a connected reductive subgroup defined over $\R$ without compact factors.  
Write $\bmH=\bmS\cdot \bmM$ as an almost direct product of some $\R$-split torus $\bmS$ and some connected $\R$-split semisimple group $\bmM$.
We assume that 
$\bmS\leq \bmZ_{\bmG}\bmM$ is a maximal $\R$-split torus.

Fix a maximal compact subgroup  $\rmK$ of $\rmG$ and an $\Ad(\rmK)$-invariant metric on $\scrG$, defined as the Lie algebra of $\rmG$. Only in this subsection we follow the convention of \cite{TomWei03} to use script letters for Lie algebras.
We fix an integral structure $\scrG_{\Z}$ on $\scrG$ that is contained in the Lie algebra of $\bmG$ and is preserved by $\Ad(\Gamma)$. 
For each $g\in \rmG$, write $\scrG_{g}:= \Ad(g)\cdot \scrG_{\Z}$.
For $\eta>0$, let $\calN_{\eta}:=\{v\in    \scrG\,\vert\, \norm{v}<\eta\}$.
For a discrete subgroup $\Lambda$ of $\scrG$, we let $\norm{\Lambda}$ be the covolume of $\Lambda$ in $\Lambda_{\R}$, the $\R$-span of $\Lambda$.

For each $\eta>0$, let $X_{\eta}$ be a compact subset of $\rmG/\Gamma$ defined by 
\begin{equation*}
    X_{\eta}=\left\{
    [g]\in\rmG/\Gamma\:\vert\:
    \scrG_{g} \cap \calN_{\eta} =\{0\}
    \right\}.
\end{equation*}
As the map $[g]\mapsto \scrG_{g}$ is a proper map from $\rmG/\Gamma$ to the space of lattices in $\scrG$ with some fixed volume, the union of interiors of $\{X_{\eta}\}_{\eta>0}$ covers $\rmG/\Gamma$ by Mahler's criterion.

To take into consideration of $\rmM$, we define 
\begin{equation*}
    X^{\rmM}_{\eta}=\left\{
    [g]\in\rmG/\Gamma\:\vert\:
    \rmM[g]\cap X_{\eta}\neq \emptyset
    \right\}.
\end{equation*}

We need to introduce some terminologies from reduction theory. For more detailed expositions one may consult \cite{Bor2019} (see also \cite{BorSer73, BorJi06, DawGoroUll18, ZhangArxivBorelSerre}).

There exists a finite collection of  $\Q$-parabolic subgroups $\{\bmP_i\}_{i \in \calA_1}$ of $\bmG$ such that any $\Q$-parabolic subgroup $\bmP$ of $\bmG$ is conjugate to one of $\bmP_i$ by $\Gamma$.
Let $\bmU_i$ be the unipotent radical of $\bmP_i$, then $\bmP_i/\bmU_i$ is a $\Q$-reductive group. Let $\bmS'_i$ denote the $\Q$-split part of its center. The lift of $\bmS'_i$ to $\bmP_i$ is not unique, but we fix one $\bmS_i$ that is defined over $\Q$.
On the other hand we take another lift $\rmA_i$ of $\rmS'_i$ that is invariant under the Cartan involution on $\rmG$ associated with $\rmK$. We let $\Delta_i$ be the simple roots for $(\rmA_i,\bmP_i)$. As $\rmA_i$ is conjugate to $\rmS_i$ in a unique way, we are safe to think of $\Delta_i$ also as simple roots for $(\bmS_i,\bmP_i)$, in which case consists of $\Q$-characters.
For a $\Q$-parabolic subgroup $\bmP$, let ${}^{\circ}\!\bmP$ be the subgroup of $\bmP$ defined by the common kernel of all $\Q$-characters of $\bmP$. And let ${}^{\circ}\!\rmP$ be the identity connected component, in the analytic topology, of ${}^{\circ}\!\bmP(\R)$. Associated with $(\rmK, \bmP_i)$, we write $g=k^i_g a^i_g p^i_g$ for the horospherical coordinates of $g\in \rmG$ (see for instance \cite[Section 2.3]{ZhangArxivBorelSerre}, one should take inverse of everything happening in the reference and combine the $\rmM$, $\rmU$ term together to get $p_g\in {}^{\circ}\!\rmP$). Note $k^i_g \in \rmK$, $a^i_g\in \rmA_i$ and $p^i_g \in {}^{\circ}\!\rmP$.

Now we define generalized Siegel sets taking into considerations of $\rmM$. When $\rmM=\{e\}$, this specializes to the usual Siegel set.
For each index $i\in \calA_1$, a bounded set $B\subset {}^{\circ}\!\rmP$ and $\theta,\ep>0$,
define
\begin{equation*}
    \Sigma^{\rmM}_{i,B,\theta}:=
    \left\{
    g\in \rmG \;\middle\lvert\;
    \parbox{50mm} 
    {
    \raggedright  $g^{-1}\rmM g \subset \rmP_i; \,
    \alpha(a^i_g)<\theta,\, \forall \alpha\in \Delta_i; \,\exists m\in \rmM,\,
    p^i_g g^{-1}mg \in B\Gamma_{ {}^{\circ}\!\rmP}$
    }
    \right\}.
\end{equation*}
and 
\begin{equation*}
    \Sigma^{\rmM}_{i,B,\theta,\ep}:=\left\{
    g\in  \Sigma^{\rmM}_{i,B,\theta} \:\middle\lvert\:
    \alpha(a^i_g)<\ep,\:\exists \alpha \in \Delta_i
    \right\}.
\end{equation*}

We need the following proposition, which is a corollary to the main result of \cite{DawGoroUllLi19}.

\begin{prop}\label{propDGUL}
For each $0<\theta<1$, there exist $B\subset \rmG$ bounded and $\eta'>0$ such that 
\begin{enumerate}
    \item 
    for every $g\in \rmG$,
    \[
    g\notin X^{\rmM}_{\eta'} \implies 
    g\in \bigcup_i \Sigma^{\rmM}_{i,B\cap {}^{\circ}\!\rmP_i,\theta} \Gamma;
    \]
    \item fix such a set of $\theta=\theta_0$, $B=B_0$ and $\eta'=\eta_0$, then there exists a function $\ep_0:(0,\eta_0)\to (0,\infty) $ with $\lim_{\eta\to0} \ep_0(\eta)=0$ such that 
    for every $g\in \rmG$,
    \[
     g\notin X^{\rmM}_{\eta} \implies 
    g\in \bigcup_i \Sigma^{\rmM}_{i,B\cap {}^{\circ}\!\rmP_i,\theta,\ep_0(\eta)} \Gamma.
    \]
\end{enumerate}
\end{prop}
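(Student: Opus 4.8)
The plan is to deduce Proposition \ref{propDGUL} from the quantitative nondivergence results of \cite{DawGoroUllLi19}, which control the way an orbit can escape to infinity in $\rmG/\Gamma$ by means of $\Q$-parabolic subgroups and their associated height functions. First I would recall the precise form of their main theorem: for a point $[g]\in\rmG/\Gamma$ lying deep in the cusp, there is a canonical (or at least controlled) $\Q$-parabolic $\bmP$ whose unipotent radical contains short vectors of $\scrG_g$, and the relevant ``depth'' is measured by the simple roots $\alpha(a_g)$ in the horospherical decomposition $g = k_g a_g p_g$ associated with $(\rmK,\bmP)$; being outside a large compact set forces some $\alpha(a_g)$ to be small, and being outside $X_\eta$ gives a bound $\alpha(a_g)<\ep_0(\eta)$ with $\ep_0(\eta)\to 0$ as $\eta\to 0$. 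Translating to one of the finitely many standard parabolics $\bmP_i$ (up to $\Gamma$-conjugacy) and to the fixed Siegel-type bounded set $B$ is then bookkeeping.

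The new ingredient here compared with the classical statement is the presence of $\rmM$: we are not looking at $[g]\notin X_\eta$ but at $[g]\notin X^{\rmM}_\eta$, i.e.\ the entire $\rmM$-orbit $\rmM[g]$ avoids $X_\eta$. The key step is to see that this forces a single $\Q$-parabolic $\bmP_i$ to work uniformly along the $\rmM$-orbit, which is why the condition $g^{-1}\rmM g\subset \rmP_i$ appears in the definition of $\Sigma^{\rmM}_{i,B,\theta}$. I would argue as follows: if $\rmM[g]$ avoids $X_{\eta'}$, then for every $m\in\rmM$ the lattice $\scrG_{mg}$ contains a short vector, hence $[mg]$ lies in some standard Siegel set associated with a $\Q$-parabolic; since $\rmM$ is connected and the set of $\Q$-parabolics is discrete, a connectedness/continuity argument (together with the fact that deep enough in the cusp the relevant parabolic is unique, which is exactly where the sharp form of \cite{DawGoroUllLi19} is needed, rather than just \cite{DanMar91}) pins down one $\bmP_i$ with $g^{-1}\rmM g\subset \rmP_i$, and then the $\rmM$-action is absorbed into the ${}^{\circ}\!\rmP_i$-coordinate, producing the clause ``$\exists m\in\rmM$, $p^i_g g^{-1}mg\in B\Gamma_{{}^{\circ}\!\rmP}$'' in the definition. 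The quantitative clause (2) then comes from feeding $\eta$ (rather than $\eta'=\eta_0$) into the same estimate and reading off the resulting bound on the simple roots as the function $\ep_0(\eta)$.

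Concretely the steps, in order, are: (i) state the version of the result of \cite{DawGoroUllLi19} being used, in the language of horospherical coordinates and simple roots; (ii) reduce from $X^{\rmM}_\eta$ to $X_\eta$ by the connectedness argument above, extracting the uniform parabolic $\bmP_i$ and the inclusion $g^{-1}\rmM g\subset\rmP_i$; (iii) conjugate into one of the finitely many standard $\bmP_i$ using $\Gamma$, and check that the bounded set $B$ can be taken inside ${}^{\circ}\!\rmP_i$ and independent of $g$ (this uses compactness of $\rmK$ and of the $\rmM$-part of the ``semisimple'' factor, since $\rmM$ is fixed); (iv) for part (1), take $\theta=\theta_0$, $\eta'=\eta_0$ and $B=B_0$ as the output; for part (2), keep $\theta_0,B_0$ fixed and extract the additional bound $\alpha(a^i_g)<\ep_0(\eta)$ with $\ep_0(\eta)\to 0$, which is immediate from the quantitative dependence in the cited theorem.

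The main obstacle I anticipate is step (ii): making rigorous the claim that the $\Q$-parabolic witnessing the excursion into the cusp can be chosen uniformly over the connected group $\rmM$. One has to be careful that $\scrG_{mg}$ having a short vector does not by itself canonically determine $\bmP$ — only sufficiently deep in the cusp is the associated parabolic unique — so one must either (a) arrange $\eta'$ small enough that uniqueness holds along the whole orbit (but the orbit is noncompact, so one needs the uniform separation of distinct $\Q$-parabolics built into \cite{DawGoroUllLi19}), or (b) use that the map sending a cusp neighbourhood to its parabolic is locally constant and $\rmM$ is connected. Handling the interface between these two facts — local constancy versus global validity along a noncompact orbit — and extracting from it the clean clause ``$g^{-1}\rmM g\subset\rmP_i$'' is the technical heart of the proof; everything else is reduction theory bookkeeping.
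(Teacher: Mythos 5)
Your proposal correctly identifies \cite{DawGoroUllLi19} as the engine and correctly isolates the crux of the matter: one needs a single $\Q$-parabolic to serve the entire $\rmM$-orbit, which is why the clause $g^{-1}\rmM g\subset\rmP_i$ appears. But the route you sketch for getting there is not the route the paper takes, and the step you yourself flag as the ``technical heart'' is left genuinely open in your writeup.

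The paper does not run a direct, pointwise argument and does not invoke any connectedness or local-constancy reasoning. Both parts (1) and (2) are proved by contradiction on sequences. For (1): one posits a sequence $g_n\notin X^{\rmM}_{1/n}$ that nonetheless fails to land in $\bigcup_i\Sigma^{\rmM}_{i,B_n\cap{}^{\circ}\!\rmP_i,\theta_0}\Gamma$ with $B_n=X_{1/n}$; from $g_n\notin X^{\rmM}_{1/n}$ one gets that the $\rmM$-orbits $\rmM[g_n]$ diverge \emph{topologically as a sequence}, and then one applies \cite[Theorem 4.2]{DawGoroUllLi19} (whose hypotheses are verified in \cite[Section 5.3]{DawGoroUllLi19}) to \emph{that divergent sequence}. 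That theorem is precisely a structure theorem for divergent sequences of $\rmM$-orbits: after passing to a subsequence it produces a single index $i_0$, elements $\gamma_n\in\Gamma$ conjugating $\rmM$ into ${}^{\circ}\!\rmP_{i_0}$, horospherical coordinates $g_n\gamma_n=k_na_np_n$ with $\max_\alpha\alpha(a_n)\to0$, and a bounded sequence $b_n$ witnessing nondivergence of $p_nM_n$ inside ${}^{\circ}\!\rmP_{i_0}/{}^{\circ}\!\rmP_{i_0}\cap\Gamma$. The bounded set $B$ is then simply $\{b_n\}$, and the contradiction is immediate once $n$ is large. In other words, the entire difficulty you identify --- pinning down one parabolic along a noncompact $\rmM$-orbit --- is outsourced wholesale to the cited theorem; there is no need to (and the paper does not attempt to) argue via discreteness of the set of $\Q$-parabolics plus connectedness of $\rmM$, an argument which would in any case need to confront the fact that in rank $\geq2$ the cusp regions of distinct standard parabolics overlap, so the ``parabolic witnessing the excursion'' is not canonically determined even deep in the cusp.

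Your step (iv) for part (2) is also off. You say the bound $\alpha(a^i_g)<\ep_0(\eta)$ is ``immediate from the quantitative dependence in the cited theorem,'' but the paper does not extract any explicit rate. Instead it \emph{defines} $\ep_0(g)$ as the infimum of those $\ep$ with $g\in\Sigma_{i_g,B_0,\theta_0,\ep}\Gamma$, observes that part (2) is equivalent to $\sup_{[g]\notin X^{\rmM}_\eta}\ep_0(g)\to0$ as $\eta\to0$, and again argues by contradiction: if the sup stays bounded below by some $\ep_0>0$ along a sequence $g_n\notin X^{\rmM}_{\eta_n}$, then the $\rmK\rmA$-part of the horospherical coordinates stays bounded, which forces $\rmM[g_n]$ to meet a fixed compact set, contradicting $g_n\notin X^{\rmM}_{\eta_n}$. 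So the function $\ep_0(\eta)$ is produced non-constructively.

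In summary: right external reference, right identification of the key clause and its difficulty, but the argument you propose for the key step is a different (and unfinished) one, and the paper's actual mechanism --- a compactness/contradiction argument on divergent sequences of $\rmM$-orbits, invoking \cite[Theorem 4.2]{DawGoroUllLi19} to structure them --- sidesteps exactly the point you were stuck on.
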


\begin{proof}[Proof of (1)]
For each positive integer $n$, we let $B_n:=X_{\frac{1}{n}}$ and  $\eta_n:=\frac{1}{n}$.
If (1) were not true, then there exists $\theta_0\in(0,1)$ and a sequence of $g_n \notin X^{\rmM}_{\eta_n}$, yet $g_n \notin \bigcup_i \Sigma^{\rmM}_{i,B_n\cap {}^{\circ}\!\rmP_i,\theta_0} \Gamma$.
By definition of $X^{\rmM}_{\eta_n}$, $(\rmM[g_n])$ diverges topologically in $\rmG/\Gamma$.

By \cite[Section 5.3]{DawGoroUllLi19} (where they proved the hypothesis of \cite[Theorem 4.2]{DawGoroUllLi19} is met) and after passing to a subsequence, there exists $i_0\in\calA_1$ and $\gamma_n\in \Gamma$ such that $M_n:=\gamma_n^{-1}g^{-1}\rmM g\gamma_n$ is contained in ${}^{\circ}\!\rmP_{i_0}$ and if we write 
\[
g_n\gamma_n =k_n a_n p_n, 
\]
the horospherical coordinate of $g_n\gamma_n$ with respect to $\bmP_{i_0}$ and $\rmK$, then
\begin{enumerate}
    \item $d_n:= \max_{\alpha\in \Delta_{i_0}}\alpha(a_n) \to 0$;
    \item $(p_nM_n)$ is non-divergent in ${}^{\circ}\!\rmP_{i_0}/{}^{\circ}\!\rmP_{i_0}\cap \Gamma$. That is to say, there exist a sequence $(m_n)$ in $\rmM$, a bounded sequence $(b_n)$ in ${}^{\circ}\!\rmP_{i_0}$ and $(\lambda_n)$ in ${}^{\circ}\!\rmP_{i_0}\cap \Gamma$ such that 
    \[
    p_n \gamma_n^{-1}g_n^{-1} m_n g_n \gamma_n = b_n \lambda_n.
    \]
\end{enumerate}
Now we let $B:=\{b_n\}$, which is a bounded set. Then we have
\[
g_n\gamma_n \in \Sigma^{\rmM}_{i_0,B,d_n}, \quad \text{so}\quad
g_n\in \Sigma^{\rmM}_{i_0,B,d_n} \Gamma.
\]
When $n$ is large enough such that $B$ is contained in $B_n\cap {}^{\circ}\!\rmP_{i_0}$ and $d_n <\theta_0$, we have a contradiction.
\end{proof}

\begin{proof}[Proof of (2)]
By (1) for each $g\in \rmG$ with $[g]\notin X^{\rmM}_{\eta_0}$, choose $i_g\in \calA_1$ such that $g\in \Sigma^{\rmM}_{i_g,B_0,\theta_0}\Gamma$. The choice of $i_g$ may not be unique, but we just fix one.

Define for  $g\in \rmG$ with $[g]\notin X^{\rmM}_{\eta_0}$,
\[
\ep_0(g):= \inf \left\{
\ep>0 \:\middle\vert \:
g\in \Sigma_{i_g,B_0,\theta_0,\ep}\Gamma
\right\}.
\]
From the definition we see that $0<\ep_0(g)\leq \theta_0$ and $g\in \Sigma_{i_g,B_0,\theta_0,2\ep_0(g)}\Gamma$. So (2) amounts to saying that (there exists some choice of $i_g$ such that) 
\[
\lim_{\eta\to0} \sup_{[g]\notin X^{\rmM}_{\eta}}\ep_0(g) =0.
\]

Hence if (2) is not true, then there exists $\ep_0>0$ and a sequence $g_n \notin X^{\rmM}_{\eta_n}$ such that $\ep_0(g_n)\geq \ep_0$ for all positive integers $n$. 
By passing to a subsequence we may assume that $i_{g_n}$ are identically equal to some $i_0\in\calA_1$ for all $n$. 
So there exists $\gamma_n \in \Gamma$ such that if $g_n\gamma_n=k_na_np_n$ is the horospherical coordinate of $g_n\gamma_n$ with respect to $\bmP_{i_0}$ then $(k_na_n)$ is bounded by the assumption that $\ep_0(g_n)\geq \ep_0$.
Also $p_n\gamma_n^{-1}g_n^{-1} \rmM g_n\gamma_n \cap B_0\Gamma\neq \emptyset$. So there exist a bounded set $B\subset\rmG/\Gamma$ such that $\rmM [g_n]\cap B\neq \emptyset$ for all $n$. This contradicts against $g_n \notin X^{\rmM}_{\eta_n}$ for some $\eta_n \to 0$.
\end{proof}

From now on we fix a choice of $\theta_0,\eta_0 \in(0,1)$, $B_0$ bounded in $\rmG$ and $\ep_0:(0,\eta_0)\to(0,\infty)$ satisfying the above proposition. 
We choose $\theta_0>0$ small enough such that 
\begin{itemize}
    \item $\alpha(a_g)<1$ for all $g\in \bigcup_i\Sigma^{\rmM}_{i,B_0\cap {}^{\circ}\!\rmP_i,\theta_0}$ and all $\alpha\in \Phi_i^{-}$
\end{itemize}
where $\Phi_i^-$ denotes all nontrivial characters of $\rmA_i$ that appears in the Lie algebra of $\rmP_i$. Elements of $\Phi_i^-$ are positive linear combinations of those from $\Delta_i$, thus such a choice of $\theta_0$ exists.
By choosing a smaller $\eta_0$, we assume that $0<\ep_0(\eta)<1$ for all $0<\eta<\eta_0$.

Define a function $ \delta_{\rmM}: \rmM\backslash\rmG/\Gamma \to (0,\infty)$ by
\begin{equation*}
   \delta_{\rmM}([g]):= \inf
   \left\{
    \norm{\calL\cap \scrG_g }^{\frac{1}{\dim\calL}} \,\middle\vert\,
    \calL\leq \scrG \text{ is } \scrG_g\text{-rational and } \rmM\text{-stable} 
   \right\}.
\end{equation*}

For each $i\in \calA_1$, 
fix (the unique) $\omega_i\in \rmU_i$ such that $\omega_i \rmA_i\omega_i^{-1}= \rmS_i$ and
decompose $\scrG$ according to the Adjoint action of $\rmA_i$, $\rmS_i$:
\[
\scrG=\bigoplus_{\alpha\in \Phi_i(\rmS_i)} \scrG^{\rmS_i}_{\alpha},
\quad
\scrG=\bigoplus_{\alpha\in \Phi_i(\rmA_i)} \scrG^{\rmA_i}_{\alpha}.
\]

We identify $\Phi_i(\rmS_i)$ with $\Phi_i(\rmA_i)$ via $\Ad(w_i)$ and will simply refer to them as $\Phi_i$. 
By definition non-zero weights appearing in the Lie algebra of $\rmP_i$, or equivalently in $\scrU_i$, the Lie algebra of $\rmU_i$, have been called negative, and
write $\Phi_i^{*}$ for the negative, zero or positive weights for $*=-,0,+$ respectively.
Also $\Phi_i^{0-}:=\Phi_i^{-}\sqcup \Phi_i^{0}$ and $\Phi_i^{0+}:=\Phi_i^{+}\sqcup \Phi_i^{0}$.
For each $\alpha\in\Phi_i$, let $\pi^{\rmS_i}_{\alpha}$ and $\pi^{\rmA_i}_{\alpha}$ denote the corresponding projections to the weight space. Note $\Ad(w_i)(\scrG^{\rmA_i}_{\alpha}) = \scrG^{\rmS_i}_{\alpha}$, so we have
\[
\Ad(w_i)\circ \pi^{\rmA_i}_{\alpha}
=\pi^{\rmS_i}_{\alpha} \circ \Ad(w_i).
\]

Also for each $i\in\calA_1$, define
$\pi^{\rmA_i}_{0+}:\scrG \to \bigoplus_{\alpha\in \Phi^{0+}_i(\rmS_i)} \scrG^{\rmS_i}_{\alpha}$  to be the natural projection 
and similarly define $\pi^{\rmA_i}_{-}$, $\pi^{\rmS_i}_{0+}$ and $\pi^{\rmS_i}_{-}$. They are related in the same manner as above.

Note that $\pi^{\rmA_i}_{0+}$ is also the orthogonal projection onto $\scrU_i^{\perp}$, which is denoted by $\pi_{\scrU_i^{\perp}}$. 
Actually, the usefulness of $\rmA_i$ comes from the fact it is invariant under the Cartan involution and hence $\pi_{\alpha}^{\rmA_i}$'s are all orthogonal projections whereas the rationality of $\rmS_i$'s makes $\pi_{\alpha}^{\rmS_i}$'s defined over $\Q$.

To be prepared for the upcoming corollary, we define some constants. For each $i\in \calA_1$ and $b\in B_0\cap \rmP_i$, let $h^i_b \in Z_{\rmG}(\rmA_i)$ and $u^i_b \in \rmU_i$ such that $b=h^i_b u^i_b$. Then the set of all possible $\{h^i_b\}$ as $i$ and $b$ vary is also bounded. Define
\[
\begin{aligned}
&C_3:= \max\left\{
1, \norm{\scrU_i\cap{\scrG_{\Z}}}
\;\middle\vert\;
i\in \calA_1
\right\}\\
&C_4:= \sup \left\{
1, \, \norm{\Ad(\omega_i)^{-1}},\,
\norm{\Ad(\omega_i)},\, 
\norm{ \Ad(\omega_ih^i_b\omega_i^{-1})^{-1}}
\;\middle\vert\;
i\in \calA_1,\,b\in B_0
\right\}.
\end{aligned}
\]
where $\norm{\Ad(g)}$ denotes the operator norm of $\Ad(g)$.

We also choose $C_5>1$ such that  for each $i\in \calA_1$,
\begin{itemize}
     \item[1.] for every $v\in \scrG_{\Z}$ and $\alpha\in \Phi_i$, either $\norm{\pi^{\rmS_i}_{\alpha}(v)}=0$ or $\norm{\pi^{\rmS_i}_{\alpha}(v)}\geq 1/C_5$;
    \item[2.] for every $v\in \scrG$ and $\alpha\in \Phi_i$, $\norm{v}\geq 1/C_5 \norm{\pi^{\rmS_i}_{\alpha}(v)}$;
    \item[3.] for every $v \in \scrG$ and $\alpha\in\Phi_i^{0+}$, $\norm{\pi^{\rmS_i}_{0+}(v)} \geq {1}/{C_5} \norm{\pi^{\rmS_i}_{\alpha}(v)}$ .
\end{itemize}

As a result of Proposition \ref{propDGUL} we obtain the following:

\begin{coro}\label{coroDGUL}
There exist $\eta_1>0$ and a function $\ep_1: (0,\eta_1)\to(0,\infty)$  with $\lim_{\eta\to0} \ep_1(\eta)=0$ such that for all $0<\eta<\eta_1$ and $g\in \rmG$ such that $[g]\notin X^{\rmM}_{\eta}$, there exists $m\in \rmM$ and an $\R$-parabolic subgroup $\bmP$ of $\bmG$ containing $\bmM$ such that $g^{-1}\bmP g$ is defined over $\Q$ and if we let $\scrU$ be the Lie algebra of $\rmU$, which is the real points of the unipotent radical of $\bmP$, then
\begin{enumerate}
    \item 
    $\norm{\scrU\cap \scrG_g}^{\frac{1}{\dim\scrU}} <\ep_1(\eta)$;
    \item for all $v\in \scrG_{mg}\setminus \scrU$, the orthogonal projection of $v$ to the orthogonal complement of $\scrU$ satisfies $\norm{\pi_{\scrU^{\perp}}(v)}\geq \left(\frac{1}{C_4C_5}\right)^2$;
    \item for any $\scrG_{g}$-rational, $\rmM$-stable subspace $\calL$ that is not contained in $\scrU$, we have $\norm{\calL\cap\scrG_g}^{\frac{1}{\dim\calL}} \geq 2C_8 \delta_{\rmM}([g])$ with $C_8>1$ as in Lemma \ref{lemExpand_general} below.
\end{enumerate}
\end{coro}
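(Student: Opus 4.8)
The plan is to deduce Corollary \ref{coroDGUL} from Proposition \ref{propDGUL} by unwinding the definition of the generalized Siegel sets $\Sigma^{\rmM}_{i,B_0\cap {}^{\circ}\!\rmP_i,\theta_0,\ep_0(\eta)}$ and translating the horospherical-coordinate decay estimates into statements about the lattice $\scrG_g=\Ad(g)\scrG_{\Z}$ and its weight-space projections. First I would fix $g$ with $[g]\notin X^{\rmM}_{\eta}$; by Proposition \ref{propDGUL}(2) there is $\gamma\in\Gamma$ and $i\in\calA_1$ with $g\gamma\in\Sigma^{\rmM}_{i,B_0\cap{}^{\circ}\!\rmP_i,\theta_0,\ep_0(\eta)}$. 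Replacing $g$ by $g\gamma$ (which changes $\scrG_g$ only by the $\Ad(\Gamma)$-action preserving $\scrG_{\Z}$), write the horospherical coordinates $g\gamma = k a p$ with $a=a^i_{g\gamma}$, so that $g^{-1}\rmM g\subset\rmP_i$, all $\alpha(a)<\theta_0$, some $\alpha(a)<\ep_0(\eta)$, and $p\cdot(g\gamma)^{-1}m(g\gamma)\in B_0\Gamma_{{}^{\circ}\!\rmP_i}$ for some $m\in\rmM$. The $\R$-parabolic one takes is $\bmP := g\bmP_i g^{-1}$ (conjugated back to $g$-frame so it contains $\bmM$), with unipotent radical $\rmU=g\rmU_i g^{-1}$ and $\scrU=\Ad(g)\scrU_i$; note $g^{-1}\bmP g=\bmP_i$ is defined over $\Q$.

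For part (1): $\scrU\cap\scrG_g = \Ad(g)(\scrU_i\cap\scrG_{\Z})$ up to the $\Gamma$-twist, and $\Ad(g) = \Ad(ka)\Ad(p)\cdot(\text{conjugation})$. On $\scrU_i$, the element $a\in\rmA_i$ contracts every weight vector by $\alpha(a)$ with $\alpha\in\Phi_i^-$; since at least one simple root satisfies $\alpha(a)<\ep_0(\eta)$ and $\scrU_i\cap\scrG_{\Z}$ is a lattice of covolume $\le C_3$ spanning $\scrU_i$, the covolume of $\Ad(a)(\scrU_i\cap\scrG_{\Z})$ is bounded by a product $\prod_{\alpha}\alpha(a)$ which, using that simple roots are dominated by the full negative root system (the $\theta_0<1$ choice) and one factor is $<\ep_0(\eta)$, is at most (a constant times) $\ep_0(\eta)^{c}$ for a positive exponent $c$ depending only on $\bmG$; the bounded factors $k$, $p$ (the latter lying in the bounded set $B_0\Gamma_{{}^{\circ}\!\rmP_i}$ modulo the arithmetic part) and $\Ad(g)$-conjugation contribute only bounded multiplicative constants. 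Taking $\ep_1(\eta)$ to be this bound raised to the $1/\dim\scrU$ power gives a function with $\lim_{\eta\to0}\ep_1(\eta)=0$; shrink $\eta_1$ so $\ep_1<1$.

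For part (2): any $v\in\scrG_{mg}\setminus\scrU$, pulled back, is $\Ad(\omega_i h^i_b\omega_i^{-1})$ applied to an integral vector (this is where the constant $C_4$ enters, via $\norm{\Ad(\omega_i)^{\pm1}}$ and $\norm{\Ad(\omega_ih^i_b\omega_i^{-1})^{-1}}$), so its $\rmS_i$-weight projection $\pi^{\rmS_i}_\alpha$ onto a nonzero component has norm $\ge 1/(C_4C_5)$ by property~1 of $C_5$; then the orthogonal projection $\pi_{\scrU_i^\perp}=\pi^{\rmA_i}_{0+}$ relates to $\pi^{\rmS_i}_{0+}$ via $\Ad(\omega_i)$ (losing another $C_4$) and dominates the single weight projection $\pi^{\rmS_i}_\alpha$ for $\alpha\in\Phi_i^{0+}$ (losing a $C_5$ by property~3), yielding $\norm{\pi_{\scrU_i^\perp}(v)}\ge(1/(C_4C_5))^2$; the contraction by $a$ on the $0+$ part only expands (since $\alpha(a)\ge$ something $\ge1$ there, using $\theta_0<1$ and $\Phi_i^{0+}$), and $k$ is an isometry, so the estimate transfers to $\scrU^\perp$ in the $g$-frame. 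For part (3): this is the ``protection'' lemma as in Lemma \ref{lemProtect_SL_N}, here run inside $\scrG$ rather than $\R^N$ — one iterates the defining property of $\delta_{\rmM}$, building up an $\scrM$-stable $\scrG_g$-rational subspace whose covolume drops by a fixed factor at each step, bounded by $\dim\scrG$ steps; I would cite or reprove the analogue of Lemma \ref{lemProtect_SL_N} in this setting, with $\scrU$ playing the role of $W_\infty$ once one checks the $\ep_1(\eta)$-smallness from part (1) forces $\scrU$ to be the ``minimal'' protected subspace. The main obstacle I expect is part (2): carefully keeping track of which projections are orthogonal (the $\rmA_i$-ones) versus which are rational (the $\rmS_i$-ones), and chaining the constants $C_3,C_4,C_5$ through the conjugation by $\omega_i$ and by $h^i_b$, so that the final bound is a clean $(1/(C_4C_5))^2$; the subtlety is that $\scrG_{mg}$, not $\scrG_g$, appears, so one must use the specific $m\in\rmM$ provided by the Siegel-set membership to absorb the $b\in B_0$ factor.
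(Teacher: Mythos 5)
Your outlines for parts (1) and (2) are essentially the paper's: replace $g$ by $g\gamma$, set $\bmP$ to be the conjugate of $\bmP_i$, use the determinant of $\Ad(a_g)$ on $\scrU_i$ together with the bound $\norm{\scrU_i\cap\scrG_{\Z}}\leq C_3$ for (1), and for (2) push a nonzero integral vector $v_g\in\scrG_{\Z}\setminus\scrU_i$ through $k_g a_g b_g$, switch between $\pi^{\rmA_i}$- and $\pi^{\rmS_i}$-projections via $\Ad(\omega_i)$, pick a maximal weight $\alpha_g\in\Phi_i^{0+}$ hitting $v_g$, and use $\alpha_g(a_g)\geq 1$ (forced by the choice of $\theta_0$) together with the unipotent-invariance $\pi^{\rmS_i}_{\alpha_g}(u\cdot v_g)=\pi^{\rmS_i}_{\alpha_g}(v_g)$. (Your remark that the exponent in (1) is $\ep_0(\eta)^c$ is a weaker claim than the paper's; the paper gets a clean $\ep_0(\eta)C_3$ because the whole determinant is already $<\ep_0(\eta)$.)

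Part (3) is where your plan goes wrong, and in a way that is structural rather than a matter of detail. You propose to run the iteration of Lemma \ref{lemProtect_SL_N} inside $\scrG$, producing a protected subspace $W_\infty$, and then argue that the smallness from (1) "forces $\scrU$ to be the minimal protected subspace." That step has no justification, and more importantly the whole scheme cannot work here: the iterated construction produces some $\rmM$-stable $\scrG_g$-rational subspace, but Lemma \ref{lemExpand_general} (the analogue of Lemma \ref{lemExpand_SL_N} in the general case) only applies when the subspace is $\scrU$ for an $\R$-parabolic $\bmP\supset\bmM$. There is no reason the iterated $W_\infty$ would be of that form, nor that it would coincide with the particular $\scrU$ handed to you by Proposition \ref{propDGUL}. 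The logical structure in the general case is therefore reversed relative to $\SL_N$: the deep input \cite{DawGoroUllLi19} (packaged in Proposition \ref{propDGUL}) already produces the right parabolic $\bmP$ with the divergence direction, and one must then \emph{prove} that its $\scrU$ is protected. The paper does this directly from (1) and (2): for $\calL\not\subset\scrU$ that is $\rmM$-stable and $\scrG_g$-rational, factor
\[
\norm{\calL\cap\scrG_{mg}} = \norm{\calL\cap\scrU\cap\scrG_{mg}}\cdot\norm{\pi_{\scrU^\perp}(\calL\cap\scrG_{mg})},
\]
bound the second factor from below by $(2NC_4^2C_5^2)^{-N}=:C_6^{-1}$ using (2), and split into two cases according to whether $\norm{\calL\cap\scrU\cap\scrG_{mg}}^{1/\dim(\calL\cap\scrU)}$ is below or above $(2C_8C_6)^N\delta_{\rmM}([g])$. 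In the small case the negative exponent $\frac{1}{\dim\calL}-\frac{1}{\dim(\calL\cap\scrU)}$ combined with the bound $\delta_{\rmM}([g])\leq\ep_1(\eta)$ from (1) yields the gain after shrinking $\eta_1$; in the large case the estimate is direct. This dichotomy, and the attendant choice of $\eta_1$ via Equation \ref{equa_defi_eta_1}, is the content your proposal is missing.
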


As the reader will see, $C_8$ could be replaced by any positive constant except that one needs to modify $\eta_1$ accordingly.

\begin{proof}
Take $\eta\in(0,\eta_0)$ and $g\notin X^{\rmM}_{\eta}$. By Proposition \ref{propDGUL}, find $i=i_g$ such that
\begin{equation*}
    g\in \Sigma^{\rmM}_{i,B_0\cap \rmP_i,\theta_0,\ep_0(\eta)}\Gamma.
\end{equation*}
By unwrapping the definition, there exists $\gamma_g\in\Gamma$ such that $\gamma_g^{-1}g^{-1}\rmM g\gamma_g $ is contained in ${}^{\circ}\!\rmP_i$ (as $\bmM$ is semisimple and $\rmM$ is connected, any conjugate of $\rmM$ being contained in $\bmP_i$ automatically implies being contained in ${}^{\circ}\!\rmP_i$) and if $g\gamma_g=k_ga_gp_g$ is the horospherical coordinate of $g\gamma_g$ with respect to $\rmP_i$ then 
\begin{enumerate}
    \item $\alpha(a_g)<\theta_0,\,\forall \alpha\in\Delta_i$;
    \item $\alpha_g(a_g)<\ep_0(\eta), \exists \alpha_g \in \Delta_i$;
    \item $p_g\gamma_g^{-1}g^{-1}m_gg\gamma_g=b_g\lambda_g$ for some $m_g\in \rmM$, $b_g\in B_0\cap {}^{\circ}\!\rmP_i$ and $\lambda_g\in {}^{\circ}\!\rmP_i\cap \Gamma$.
\end{enumerate}
Recall that we have chosen $\theta_0$ such that $\alpha(a_g)<1$ for all $\alpha\in\Phi_i^{-}$. Hence $\alpha(a_g)\geq 1$ for all $\alpha\in\Phi_i^{+0}$.

Now take $m:=m_g$, $0<\eta_1<\eta_0$, which will be determined later at Equation \ref{equa_defi_eta_1} in the proof of (3). Let $\bmP:=g\gamma_g\bmP_i\gamma_g^{-1}g^{-1}$. Then $\bmP$ contains $\bmM$ and $g^{-1}\bmP g= \gamma_g\bmP_i\gamma_g^{-1}$ is defined over $\Q$. 
Also let $\bmU$ be the unipotent radical $\bmP$.
Define $\ep_1(\eta):=\ep_0(\eta)^{1/N}C_3^{1/N}$.
It remains to prove the three claims.

\subsubsection*{Proof of (1)}
Recall that $N$ denotes the dimension of $\bmG$.

\[
\begin{aligned}
\norm{\scrU\cap \scrG_g} &=\norm{(g\gamma_g\cdot \scrU_i)\cap (g\gamma_g\cdot \scrG_{\Z})}\\
&=\norm{ (g\gamma_g)\cdot (\scrU_i \cap \scrG_{\Z}) }\\
&= \norm{ (k_ga_gp_g) \cdot (\scrU_i \cap \scrG_{\Z}) }.
\end{aligned}
\]
First note that $p_g$ preserves $\scrU_i$ and preserves the (co)volume. On the other hand $a_g$ also preserves $\scrU_i$ but $\alpha(a_g)<1$ for all $\alpha$ appearing in $\scrU_i$ and for $\alpha=\alpha_g$, which appears in $\scrU_i$, $\alpha(a_g)<\ep_0(\eta)$. So $|\det(\Ad(a_g)\vert_{\scrU_i})|<\ep_0(\eta)$. Hence
\[
\begin{aligned}
&\norm{\scrU\cap \scrG_g} \leq \ep_0(\eta)\norm{\scrU_{i}\cap\scrG_{\Z}}\leq \ep_0(\eta)C_3.\\
\implies &
\norm{\scrU\cap \scrG_g}^{\frac{1}{\dim\scrU}} \leq 
\left(
\ep_0(\eta)C_3
\right)^{1/N}
=\ep_1(\eta).
\end{aligned}
\]
This proves (1).
Note this also shows that $\delta_{\rmM}([g])\leq \ep_1(\eta)$.

\subsubsection*{Proof of (2)}
Take $v\in \scrG_{mg}\setminus \scrU$. 
As $\scrG_{mg}=mg\cdot \scrG_{\Z}=mg\gamma_g\lambda_g^{-1}\cdot\scrG_{\Z}$ 
and $\scrU= mg\gamma_g\lambda_g^{-1}\cdot\scrU_i$, we can find $v_g\in\scrG_{\Z}\setminus \scrU_i$ such that 
\[ v=mg\gamma_g \lambda_g^{-1}\cdot v_g. \]
Hence 
\[
\begin{aligned}
v &= ( g\gamma_{g}) \cdot (\gamma_g^{-1}g^{-1}mg\gamma_g)\cdot \lambda^{-1}_g \cdot v_g \\
&= (k_ga_g)\cdot  (p_g\gamma_g^{-1}g^{-1}mg\gamma_g)\cdot \lambda^{-1}_g \cdot v_g \\
&= k_ga_gb_g\cdot v_g.
\end{aligned}
\]

Note that $\scrU=g\gamma_g\cdot \scrU_{i}=k_ga_gp_g\cdot \scrU_i =k_g\cdot \scrU_i$ and $\Ad(k_g)$ acts by isometry, we have 
\[
\Ad(k_g)\circ \pi_{\scrU_i^{\perp}} = \pi_{\scrU^{\perp}}\circ \Ad(k_g).
\]
Also recall that $\pi_{\scrU_i^{\perp}}= \pi^{\rmA_i}_{0+}$ and
\[
\Ad(w_i)\circ \pi^{\rmA_i}_{0+} = \pi^{\rmS_i}_{0+}\circ \Ad(w_i).
\]
Now if we write $s_g=w_ia_gw_i^{-1}\in \rmS_i$ and $b_g=h^i_gu^i_g$ for some $h^i_g\in Z_{\rmG}(\rmA_i)$ and $u^i_g\in \rmU_i$, then
\[
\begin{aligned}
\norm{\pi_{\scrU^{\perp}}(v)} &= 
\norm{ \pi_{\scrU^{\perp}}(k_ga_gb_g\cdot v_g)} = \norm{ k_g\cdot \pi_{\scrU_i^{\perp}}(a_gb_g\cdot v_g)}
= \norm{ 
\pi^{\rmA_i}_{0+}(a_gb_g\cdot v_g)}\\
&= \norm{ 
w_i^{-1} \cdot \pi^{\rmS_i}_{0+}((w_ia_gw_i^{-1})(w_ib_g)\cdot v_g)
}\\
& \geq \frac{1}{C_4} \norm{   \pi^{\rmS_i}_{0+}((s_g) (w_ih^i_{g}w_i^{-1}) (w_iu^i_g)\cdot v_g)}
\end{aligned}
\]
Let $\alpha_g\in \Phi^{0+}_i$ be 
a maximal element (our convention about the partial order is that $\alpha-\beta$ is contained in positive combinations of $\Delta_i$ iff $\alpha\leq \beta$) 
such that $\pi^{\rmS_i}_{\alpha_g}(v_g)\neq 0$.  
 Then for any $u\in \rmU_i$, $\pi^{\rmS_i}_{\alpha_g}(u\cdot v_g)=\pi^{\rmS_i}_{\alpha_g}(v_g)$.
Also  the reader is reminded that $\norm{\pi^{\rmS_i}_{\alpha_g}(v_g)}\geq 1/C_5$ and $C_4$ bounds the operator norm of some elements. Recall that $\alpha_g(s_g)>1$ as we have chosen $\theta_0$ small enough.

We may continue the above inequalities as 
\[
\begin{aligned}
\norm{\pi_{\scrU^{\perp}}(v)} &\geq 
\frac{1}{C_4C_5} 
 \norm{   \pi^{\rmS_i}_{\alpha_g}((s_g) (w_ih^i_{g}w_i^{-1}) (w_iu^i_g)\cdot v_g)}\\
 &\geq 
 \frac{1}{C_4C_5} 
  \norm{ (w_ih^i_{g}w_i^{-1})\cdot  \pi^{\rmS_i}_{\alpha_g}(  w_iu^i_g\cdot v_g)}\\
  &\geq \frac{1}{C_4C_5C_4}  \norm{  \pi^{\rmS_i}_{\alpha_g}(  v_g)}
  \geq \left(\frac{1}{C_4C_5}\right)^{2}.
\end{aligned}
\]

\subsubsection*{Proof of (3)}
We are going to use both (1) and (2) here.
As $\calL$ is $\rmM$-stable, $\calL$ is also $\scrG_{mg}$-rational and $\norm{\calL\cap{\scrG_{g}}}=\norm{\calL\cap \scrG_{mg}}$. This is the only place we need $\calL$ to be $\rmM$-stable.

As $\scrU$ is also $\scrG_{mg}$-rational, we have that $\pi_{\scrU^{\perp}}(\calL\cap\scrG_{mg})$ is a lattice in $\scrU^{\perp}$ (we are not claiming that $\pi_{\scrU^{\perp}}$ is $\scrG_{mg}$-rational, but $\pi_{\scrU^{\perp}}$ is a lift of the map of quotient by $\scrU$, which is $\scrG_{mg}$-rational) and
\[
\norm{\calL\cap\scrG_{mg}} = \norm{\calL\cap \scrU \cap\scrG_{mg}}
\cdot \norm{\pi_{\scrU^{\perp}}(\calL\cap\scrG_{mg})}.
\]
By (2), $\pi_{\scrU^{\perp}}(\calL\cap\scrG_{mg})$ has its shortest non-zero vector with length at least  $1/(C_4C_5)^2$. Therefore it has a fundamental domain containing a cube of size $1/2N(C_4C_5)^2$. Hence 
\[
\norm{\pi_{\scrU^{\perp}}(\calL\cap\scrG_{mg})} \geq 
\left(
\frac{1}{2NC_4^2C_5^2}
\right)^N.
\]
For simplicity we let
\[
\begin{aligned}
&C_6:= {(2NC_4^2C_5^2)}^N, \\
&C_7^{-1}:= \min \left\{\left|\frac{1}{x}-\frac{1}{y}\right|
\:\middle\vert \:
x\neq y, \, x,y\in\{1,...,N\}
\right\}.
\end{aligned}
\]
And we choose $\eta_1$ small enough such that for any $0<\eta<\eta_1$,
\begin{equation}\label{equa_defi_eta_1}
    \begin{aligned}
&2^NC_8^NC_6^N \ep_1(\eta) <1,\\
&C_6^{-1} 
\left(
2^NC_8^NC_6^N\ep_1(\eta)
\right)^{-C_7^{-1}} \geq 2C_8.
\end{aligned}
\end{equation}
where $C_8$ is as in Lemma \ref{lemExpand_general}.

Now there are two cases depending on how large $\norm{\calL\cap \scrU \cap\scrG_{mg}}$ is. 

\textit{Case I}, 
$\norm{\calL\cap \scrU \cap\scrG_{mg}}^{1/\dim (\calL \cap \scrU)} \leq (2C_8C_6)^N\delta_{\rmM}[g]$.

Note by our assumption $\dim \calL > \dim (\calL\cap\scrU)$ and hence $(\dim \calL)^{-1} - (\dim \calL\cap \scrU)^{-1}<0$.
Also, $\delta_{\rmM}([g])\leq \ep_1(\eta)$ by part (1).
\[
\begin{aligned}
\norm{\calL\cap {\scrG_{mg}}}^{\frac{1}{\dim \calL}}
&\geq
C_6^{-1/\dim \calL}
\cdot 
\norm{\calL\cap \scrU\cap {\scrG_{mg}}}^{\frac{1}{\dim \calL}-\frac{1}{\dim \calL\cap \scrU}} \cdot
\norm{\calL\cap \scrU\cap {\scrG_{mg}}}^{\frac{1}{\dim \calL\cap \scrU}} \\
&\geq 
C_6^{-1} \cdot 
\left(
2^NC_8^NC_6^N\ep_1(\eta)
\right) ^{\dim(\calL\cap\scrU)\cdot ({\frac{1}{\dim \calL}-\frac{1}{\dim \calL\cap \scrU}})}
\cdot \delta_{\rmM}([g]) 
\end{aligned}
\]
By assumption 
$0<2^NC_8^NC_6^N\ep_1(\eta)<1$ and 
${\dim(\calL\cap\scrU)\cdot ({\frac{1}{\dim \calL}-\frac{1}{\dim \calL\cap \scrU}})}\leq -C_7^{-1}<0$, hence we may continue:
\[
\begin{aligned}
\norm{\calL\cap \scrG_{mg}}^{\frac{1}{\dim \calL}}
&\geq  C_6^{-1} \cdot
\left(
2^NC_8^NC_6^N\ep_1(\eta)
\right)^{-C_7^{-1}}\cdot \delta_{\rmM}([g]) \geq 2C_8\delta_{\rmM}([g]) ,
\end{aligned}
\]
which completes the case I.

\textit{Case II},  
$\norm{\calL\cap \scrU\cap \scrG_{mg} }^{1/\dim (\calL \cap \scrU)}\geq (2C_8C_6)^N\delta_{\rmM}[g]$.

This case is more direct.
\[
\begin{aligned}
\norm{\calL\cap \scrG_{mg}}^{\frac{1}{\dim \calL}} &\geq C_6^{-1/\dim\calL} 
\left(2^NC_8^NC_6^N\delta_{\rmM}([g])
\right)^{\frac{\dim (\calL\cap\scrU)}{\dim \calL}}\\
&\geq
C_6^{-1}2C_8C_6\delta_{\rmM}([g])=2C_8\delta_{\rmM}([g]).
\end{aligned}
\]
Now the proof of the corollary is complete.
\end{proof}

\begin{lem}\label{lemExpand_general}
There exist two constants $C_8, C_9>1$ and a finite set $\calF\subset S$ such that for any $\R$-parabolic subgroup $\bmP$ of $\bmG$ containing $\rmM$, there exists $s\in \calF$ such that
\begin{enumerate}
    \item $\displaystyle\norm{sv}>\frac{1}{C_8}\norm{v}$ for all pure wedges $v$ in $\scrG$;
    \item $\norm{sv}>C_9 \norm{v}$ for all pure wedges $v$ with $\calL_v \subset \scrU$.
\end{enumerate}
\end{lem}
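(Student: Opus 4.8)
The plan is to follow the strategy of Lemma \ref{lemExpand_SL_N}, replacing the representation $\R^N$ by $\scrG$ with the adjoint action and the $\rmM$-stable subspace $W$ by the Lie algebra $\scrU$ of the unipotent radical of an $\R$-parabolic. Part (1) is automatic: for any finite $\calF\subset S$ one may take $C_8:=\max_{s\in\calF}\max_{1\le k\le N}\norm{\wedge^k\Ad(s^{-1})}$, since then $\norm{sv}\ge C_8^{-1}\norm{v}$ for every pure wedge. For part (2) the first move is to reduce to a single parabolic. It suffices to produce, for each $\R$-parabolic $\bmP\supseteq\rmM$ with unipotent radical $\bmU$ and $\scrU:=\Lie\rmU$, some $s_\bmP\in\rmS$ and $\ep_\bmP>0$ with $\norm{\Ad(s_\bmP)X}\ge(1+\ep_\bmP)\norm{X}$ for all $X\in\scrU$: then, for a pure wedge $v$ with $\calL_v\subseteq\scrU$, every singular value of $\Ad(s_\bmP)|_{\calL_v}$ is $\ge 1+\ep_\bmP$, so $\norm{\Ad(s_\bmP)v}\ge(1+\ep_\bmP)^{\dim\calL_v}\norm{v}\ge(1+\ep_\bmP)\norm{v}$. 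Since within each conjugacy type $\bmP'\mapsto\scrU_{\bmP'}$ is a morphism to a Grassmannian and the smallest singular value of $\Ad(s_\bmP)|_{\scrU_{\bmP'}}$ is continuous in $\bmP'$, the set of $\bmP'$ on which $\Ad(s_\bmP)$ is still strictly expanding is an open neighbourhood of $\bmP$. The $\R$-parabolics containing $\rmM$ form a closed, hence compact, subset of the variety of parabolic subgroups of $\bmG$; a finite subcover then yields $\calF$ (to which we add $e$) and a uniform $C_9>1$.

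We may and do assume $\rmK$ is chosen so that the Cartan involution it determines stabilizes $\bmS$; then $\Ad(s)$ is a self-adjoint positive operator for every $s\in\rmS$, so the $\rmS$-weight spaces $\scrG=\bigoplus_{\chi}\scrG^{\rmS}_\chi$ are mutually orthogonal. Fix a proper $\bmP\supseteq\rmM$ and write $\scrG^{\rmS}_{\le 0}(Y):=\bigoplus_{\langle\chi,Y\rangle\le 0}\scrG^{\rmS}_\chi$ for $Y\in\Lie\rmS$. I claim there is $s_\bmP\in\rmS$ expanding every nonzero vector of $\scrU$ as soon as there is $Y\in\Lie\rmS$ with $\scrU\cap\scrG^{\rmS}_{\le 0}(Y)=\{0\}$. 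Indeed, under this hypothesis the orthogonal projection $\scrU\to\bigoplus_{\langle\chi,Y\rangle>0}\scrG^{\rmS}_\chi$ is injective, hence bounded below by some $c>0$ on the unit sphere of $\scrU$; writing $s_\bmP=\exp(tY)$ and using orthogonality of the weight spaces we get, for $X\in\scrU$ with $\norm{X}=1$,
\begin{equation*}
    \norm{\Ad(s_\bmP)X}^2=\sum_\chi e^{2t\langle\chi,Y\rangle}\norm{X_\chi}^2\ge e^{2tm}\,c^2,
\end{equation*}
with $m:=\min\{\langle\chi,Y\rangle:\langle\chi,Y\rangle>0\}>0$, which exceeds $1$ for $t$ large. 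Equivalently, since $\scrG^{\rmS}_{\le 0}(Y)=\Lie\bmP(-\mu_Y)$ for the $\bmS$-standard $\R$-parabolic $\bmP(\mu_Y)\supseteq\rmM$ attached to the cocharacter in the direction $Y$, it suffices to find an $\bmS$-standard $\R$-parabolic $\bmQ\supseteq\rmM$ with $\scrU\cap\Lie\bmQ=\{0\}$, i.e.\ with $\bmP$ and $\bmQ$ in opposite relative position.

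Producing such a $\bmQ$ is the heart of the argument. Write $\bmP=\bmL\ltimes\bmU$; conjugating the Levi decomposition by an element of $\bmU$ we may assume $\bmM\subseteq\bmL$, so that $\bmL=\bmZ_{\bmG}(\lambda)$ and $\bmP=\bmP(\lambda)$ for a cocharacter $\lambda$ commuting with $\rmM$, i.e.\ $\lambda\in X_*(\bmZ_{\bmG}(\bmM))$; then $\scrU=\scrG_\lambda^{>0}$. Decomposing $\scrG=\bigoplus_\pi V_\pi\otimes M_\pi$ into $\rmM$-isotypic components — on whose multiplicity spaces $M_\pi$ the group $\bmZ_{\bmG}(\bmM)$, hence $\bmS$ and $\lambda$, acts — one has $\scrU=\bigoplus_\pi V_\pi\otimes(M_\pi)_\lambda^{>0}$, and the transversality we want becomes: there is $Y\in\Lie\rmS$ such that for every $\pi$ the $\lambda$-positive subspace $(M_\pi)_\lambda^{>0}$ meets the $Y$-non-positive subspace $(M_\pi)^{\rmS}_{\le 0}(Y)$ only in $0$. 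Because $\bmS$ is a \emph{maximal} $\R$-split torus of $\bmZ_{\bmG}(\bmM)$ — this is precisely where the hypothesis that $\bmZ_{\bmG}(\bmH)$ be $\R$-anisotropic modulo $\bmZ(\bmH)$ is used — the cocharacter $\lambda$ is conjugate under $\bmZ_{\bmG}(\bmM)(\R)$ into $X_*(\bmS)$, and the conjugation doing this commutes with $\Ad(\rmM)$, hence respects the $\rmM$-isotypic decomposition. One then argues, by a general‑position (Bruhat‑type) statement for the two parabolics $\bmP(\lambda)$ and the $\bmS$-standard parabolics of $\bmZ_{\bmG}(\bmM)$ viewed inside $\bmG$, that a suitable $\bmS$-standard parabolic $\bmQ\supseteq\rmM$ is opposite to $\bmP$. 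Controlling this last step — equivalently, verifying $\scrU\cap\Lie\bmZ_{\bmG}(\bmS)=\{0\}$ together with its refinement for an entire half‑space of $\rmS$-weights — is the main obstacle; everything else parallels Lemmas \ref{lemExpand_SL_N} and \ref{lemProtect_SL_N} and is routine.
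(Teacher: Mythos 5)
Your overall plan is structurally the same as the paper's: part (1) is free once $\calF$ is finite, and for part (2) the task is to show that for each $\R$-parabolic $\bmP\supseteq\bmM$ there is a cocharacter direction $Y\in\Lie\rmS$ that is ``opposite'' to $\bmP$ in the sense that the $\rmS$-equivariant projection of $\scrU$ onto the $Y$-positive weight space is injective, after which a continuity/compactness argument (also used in the paper, both in Lemma \ref{lemExpand_SL_N} and here) extracts a finite $\calF$ and a uniform $C_9$. Your reformulation in terms of $\scrU\cap\scrG^{\rmS}_{\le0}(Y)=\{0\}$, and the reduction to the $\rmM$-isotypic multiplicity spaces $M_\pi$, are both correct.

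However, there is a genuine gap, and you flag it yourself: the ``general-position (Bruhat-type) statement'' asserting that some $\bmS$-standard parabolic $\bmQ\supseteq\rmM$ is opposite to $\bmP$ is never proved, and it is precisely the content of the lemma. Conjugating $\lambda$ into $X_*(\bmS)$ by some $g\in\bmZ_{\bmG}(\bmM)(\R)$ only tells you that $\bmP$ is $Z_{\rmG}(\rmM)$-conjugate to an $\bmS$-standard one; it moves the subspace $\scrU$ along with $\lambda$, so it does not by itself give transversality between the \emph{original} $\scrU$ and any $\scrG^{\rmS}_{\le0}(Y)$. Likewise, the compactness-and-finite-subcover step cannot substitute for this: you still need an $s_\bmP$ for \emph{each} $\bmP$ before any covering argument can start.

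The paper closes exactly this gap via the Bruhat decomposition of $Z_{\rmG}(\rmM)$: writing $\bmP=h\bmP_ih^{-1}$ with $h\in Z_{\rmG}(\rmM)$ and decomposing $h=uwp$ (so that $p$ stabilizes $\bmP_i$, $w$ sends $\bmP_i$ to another $\bmS$-standard $\bmP_{j_0}$, and $u$ lies in a unipotent subgroup normalized by $\bmS$), one gets $\bmP=u\cdot\bmP_{j_0}$ and hence $\scrU=\Ad(u)\scrU_{j_0}$. Injectivity of $\pi_{j_0}|_{\scrU}$ is then shown by taking a \emph{minimal} $\rmS$-weight $\alpha_0$ appearing in $w\in\scrU_{j_0}$ and observing $\pi^{\rmS}_{\alpha_0}(u\cdot w)=\pi^{\rmS}_{\alpha_0}(w)\neq0$, since $\Ad(u)$ only raises weights in the chosen partial order; combined with $\scrG^{\rmS}_{\alpha_0}\subseteq\scrU_{j_0}$, this gives $\pi_{j_0}(\Ad(u)w)\neq0$. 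This is the concrete argument that delivers the opposite standard parabolic your proposal postulates. Without it (or an equivalent replacement), the proposal is incomplete at its central step.
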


\begin{proof}
The proof is similar to that of Lemma \ref{lemExpand_SL_N} where the key is to produce a bijective projection onto some `coordinate plane' $V_I$. Here we will produce a bijective projection to some `coordinate horospherical Lie subalgebra'.

Let $\calP_M$ be the collection of $\R$-parabolic subgroups of $\bmG$ containing $\bmM$. 
For each $\bma_t \in X_*(\bmS)$, we let 
\[
\bmP_{\bma_t}:=\left\{
g\in \bmG \:\middle\vert \:
\lim_{t\to0} \bma_tg\bma_t^{-1} \text{ exists }
\right\}.
\]
This way we get a finite collection of elements in $\calP_M$. Label this set as $\{\bmP_i\}_{i\in\calB}$ for some finite set $\calB$. As $\bmS\leq \bmZ_{\bmG}\bmM$ is a maximal $\R$-split torus, for any other $\bmP\in \calP_M$, there exists $h\in Z_{\rmG}(\rmM)$ such that $\bmP=h\bmP_ih^{-1}$ for some $i\in \calB$. Hence $\calP_M$ is a finite union of compact homogeneous spaces of $Z_{\rmG}(\rmM)$. Indeed, the stabilizer of each $\bmP_i$ in $\bmZ_{\bmG}(\bmM)$ is a parabolic subgroup of $\bmZ_{\bmG}(\bmM)$.

Now fix $\bmP\in \calP_M$ and $h\in  Z_{\rmG}(\rmM)$ with $\bmP=h\bmP_i h^{-1}$. By Bruhat decomposition (see \cite[21.15]{BorLinearAG}), there exists $w\in N_{Z_{\rmG}(\rmM)}\rmS$, $u\in \rmU$ and $p\in \rmP_i$ such that $h=uwp$ where $\bmU$ is a maximal $\R$-unipotent subgroup contained in $\bmP_i$ that is normalized by $\bmS$. Then $\bmP= h\cdot \bmP_i = uwp\cdot \bmP_{i}= u\cdot \bmP_{j_0}$ for some $j_0\in \calB$, where $\cdot$ denotes the conjugation by a group element.

We claim that the $\rmS$-equivariant projection $\pi_{j_0}: \scrG \to \scrU_{j_0}$, when restricted to $\scrU$, is bijective (but usually $\scrU$ is not $\rmS$-stable). 

Decompose $\scrG=\bigoplus_{\alpha\in\Phi} \scrG^{\rmS}_{\alpha}$ with respect the $\Ad(\rmS)$-action. And let $\pi^{\rmS}_{\alpha}$ be the associated projection onto $\scrG_{\alpha}^{\rmS}$. 
Fix a minimal $\R$-parabolic $\bmP_{min}$ with $\bmS\cdot\bmU \leq\bmP_{min}\leq\bmP_i$.
A partial order $\leq_i$ is defined on $\Phi$ by demanding that $\alpha\leq_i \beta$ iff $\beta - \alpha$ is a weight that appears in $\scrP_{min}$, the Lie algebra of $\rmP_{min}$.

Now take $v\in \scrU$, so $v=u\cdot w$ for some $w\in \scrU_{j_0}$. Take $\alpha_0\in\Phi$ to be a minimum element such that $\pi^{\rmS}_{\alpha_0}(w)\neq 0$. Then 
\[
\pi^{\rmS}_{\alpha_0}(u\cdot w) = \pi^{\rmS}_{\alpha_0}(u\cdot \pi^{\rmS}_{\alpha_0}(w)) = \pi^{\rmS}_{\alpha_0}( w) \neq 0.
\]
As $\scrU_{j_0}$ is defined by a cocharacter of $\bmS$ and projects to $\scrG^{\rmS}_{\alpha_0}$ nontrivially,  $\scrU_{j_0}$ necessarily contains $\scrG^{\rmS}_{\alpha_0}$.
Thus $\pi_{j_0}(v)=\pi_{j_0}(u\cdot w)\neq 0$. This proves that $\pi_{j_0}\vert_{\scrU}: \scrU \to \scrU_{j_0}$ is injective. And hence it is bijective because $\scrU$ and $\scrU_{j_0}$  have the same dimension.

The rest of the proof follows the same lines as in Lemma \ref{lemExpand_SL_N} and is omitted.
\end{proof}

\begin{prop}
There exists $\eta_2>0$ and $C_{10}>1$ such that for all $[g]\notin X^{\rmM}_{\eta_2}$, there exists $s \in \rmS$ such that $\delta_{\rmM}([sg])>C_{10}\delta_{\rmM}([g])$. Consequently, there exists $s\in \rmS$ such that $[sg]\in X^{\rmM}_{\eta_2}$.
\end{prop}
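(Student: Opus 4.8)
The plan is to mirror the proof of Proposition~\ref{prop_pushoutSL_N}, now with $\scrG$ in place of $\R^N$, the lattice $\scrG_g$ in place of $\Lambda$, the Lie algebra $\scrU$ produced by Corollary~\ref{coroDGUL} in place of the protected subspace $W_\infty$, and with Corollary~\ref{coroDGUL} and Lemma~\ref{lemExpand_general} playing the roles of Lemmas~\ref{lemProtect_SL_N} and~\ref{lemExpand_SL_N}. Fix $C_8,C_9>1$ and the finite set $\calF\subset\rmS$ from Lemma~\ref{lemExpand_general}, fix $\eta_1>0$ from Corollary~\ref{coroDGUL}, and then choose $\eta_2\in(0,\eta_1)$ and any $C_{10}\in(1,\min\{2,C_9^{1/N}\})$, where $N=\dim\scrG$.

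So let $[g]\notin X^{\rmM}_{\eta_2}$. First I would apply Corollary~\ref{coroDGUL} with $\eta=\eta_2$ to obtain $m\in\rmM$ and an $\R$-parabolic subgroup $\bmP\supseteq\bmM$ with $g^{-1}\bmP g$ defined over $\Q$, whose unipotent radical has Lie algebra $\scrU$ enjoying properties (1)--(3) there; then I would apply Lemma~\ref{lemExpand_general} to this $\bmP$ to get $s\in\calF\subset\rmS$ with $\norm{sv}>\tfrac{1}{C_8}\norm{v}$ for every pure wedge $v$ in $\scrG$ and $\norm{sv}>C_9\norm{v}$ for every pure wedge $v$ with $\calL_v\subseteq\scrU$. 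I claim this $s$ works. Since $s$ centralizes $\rmM$, conjugation by $s$ commutes with the $\rmM$-action and $\scrG_{sg}=\Ad(s)\scrG_g$; hence $\calL'\mapsto\calL:=\Ad(s^{-1})\calL'$ is a bijection from the $\scrG_{sg}$-rational $\rmM$-stable subspaces onto the $\scrG_g$-rational $\rmM$-stable subspaces, along which $\dim\calL'=\dim\calL$ and $\norm{\calL'\cap\scrG_{sg}}=\norm{\,s\cdot(\calL\cap\scrG_g)\,}=\norm{sv}$ for the pure wedge $v$ of a $\Z$-basis of $\calL\cap\scrG_g$ (so that $\calL_v=\calL$).

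Now fix such a $\calL'$ and argue by cases. If $\calL\subseteq\scrU$, then property (2) of Lemma~\ref{lemExpand_general} and the definition of $\delta_{\rmM}$ give $\norm{\calL'\cap\scrG_{sg}}^{1/\dim\calL'}>C_9^{1/\dim\calL}\,\norm{\calL\cap\scrG_g}^{1/\dim\calL}\geq C_9^{1/N}\delta_{\rmM}([g])$. If $\calL\not\subseteq\scrU$, then property (1) of Lemma~\ref{lemExpand_general} together with Corollary~\ref{coroDGUL}(3) give $\norm{\calL'\cap\scrG_{sg}}^{1/\dim\calL'}>C_8^{-1/\dim\calL}\,\norm{\calL\cap\scrG_g}^{1/\dim\calL}\geq 2C_8^{1-1/\dim\calL}\delta_{\rmM}([g])\geq 2\delta_{\rmM}([g])$, using $\dim\calL\geq 1$ and $C_8>1$. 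In both cases the left-hand side is strictly larger than $\min\{2,C_9^{1/N}\}\delta_{\rmM}([g])$; as this holds for every such $\calL'$, passing to the infimum yields $\delta_{\rmM}([sg])\geq\min\{2,C_9^{1/N}\}\delta_{\rmM}([g])$, and then $\delta_{\rmM}([g])>0$ together with $C_{10}<\min\{2,C_9^{1/N}\}$ gives $\delta_{\rmM}([sg])>C_{10}\delta_{\rmM}([g])$.

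For the concluding sentence, suppose towards a contradiction that $[g]\notin X^{\rmM}_{\eta_2}$ yet $[sg]\notin X^{\rmM}_{\eta_2}$ for all $s\in\rmS$. Iterating the first part (each $[s_kg]$ is again outside $X^{\rmM}_{\eta_2}$ by hypothesis) produces $s_1,s_2,\dots\in\rmS$ with $\delta_{\rmM}([s_kg])>C_{10}^k\delta_{\rmM}([g])\to\infty$, contradicting the uniform bound $\delta_{\rmM}\leq\norm{\scrG_{\Z}}^{1/N}$ (valid because $\Ad(\rmG)\subset\SL(\scrG)$, so every $\scrG_h$ has the same covolume $\norm{\scrG_{\Z}}$, and $\scrG$ itself is $\scrG_h$-rational and $\rmM$-stable). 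Hence some $s\in\rmS$ satisfies $[sg]\in X^{\rmM}_{\eta_2}$. All the substance is in Corollary~\ref{coroDGUL} and Lemma~\ref{lemExpand_general}; the remaining care is in tracking the interaction of $\Ad(s)$ with $\rmM$-stability and $\scrG_g$-rationality, in handling the exponents $1/\dim\calL$ versus $1/N$ so as to extract a constant genuinely exceeding $1$, and in upgrading to the strict inequality via $\delta_{\rmM}>0$.
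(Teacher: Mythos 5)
Your proof is correct and follows essentially the same route as the paper's: obtain $\bmP$ and $\scrU$ from Corollary~\ref{coroDGUL}, pick $s\in\calF$ from Lemma~\ref{lemExpand_general} applied to that $\bmP$, translate $\scrG_{sg}$-rational $\rmM$-stable subspaces to $\scrG_g$-rational $\rmM$-stable ones via $\Ad(s^{-1})$, and split into the two cases $\calL\subseteq\scrU$ and $\calL\not\subseteq\scrU$. You are in fact a bit more careful than the paper on two small points: you take $\eta_2$ strictly below $\eta_1$ (which is what the hypothesis $0<\eta<\eta_1$ of Corollary~\ref{coroDGUL} actually requires, whereas the paper sets $\eta_2:=\eta_1$), and you shave $C_{10}$ strictly below $\min\{2,C_9^{1/N}\}$ so as to obtain the stated strict inequality after taking the infimum (the paper states $>$ but proves only $\geq$ with $C_{10}=\min\{2,C_9^{1/N}\}$). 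Your justification of the concluding sentence by iterating within the group $\rmS$ and using the uniform upper bound $\delta_{\rmM}\leq\norm{\scrG_{\Z}}^{1/N}$ is also the intended argument, which the paper leaves implicit.
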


Theorem \ref{thmMain} would follow from this proposition by  \cite[Theorem 4.6]{DawGoroUll18} just as in last section.

\begin{proof}
We prove the proposition with $\eta_2:=\eta_1$ and $C_{10}:= \min\{C_9^{1/N},2\}$. 

Take any $[g]\notin X^{\rmM}_{\eta_2}$.

Find $\bmP$ an $\R$-parabolic subgroup of $\bmG$ according to Corollary \ref{coroDGUL}. Then choose $s\in \rmS$ with Lemma \ref{lemExpand_general} being applied to $\bmP$.
It suffices to prove that $\delta_{\rmM}([sg])\geq C_{10}\delta_{\rmM}([g])$.

Take an $\R$-subspace $\calL'$ of $\scrG$ that is $\scrG_{sg}$-rational and $\rmM$-stable. 
Then $\calL:=s^{-1}\calL'$ is $\scrG_g$-rational and $\rmM$-stable. Also, $\calL'\cap\scrG_{sg}=s \cdot (\calL\cap \scrG_{g})$. There are two cases.

If $\calL \subset \scrU$, then
\[
\norm{\calL'\cap\scrG_{sg}}^{\frac{1}{\dim \calL'}}
\geq C_9^{\frac{1}{\dim \calL'}}
\norm{\calL \cap \scrG_{g}}^{\frac{1}{\dim \calL}}
\geq  C_9^{\frac{1}{\dim \calL'}}\delta_{\rmM}([g])
\geq  C_9^{\frac{1}{N}}\delta_{\rmM}([g])
.
\]

If $\calL \nsubseteq \scrU$, then
\[
\norm{\calL'\cap \scrG_{sg}}^{\frac{1}{\dim \calL'}}
\geq C_8^{-\frac{1}{\dim \calL'}}
\norm{\calL\cap \scrG_{g}}^{\frac{1}{\dim \calL}}
\geq  C_8^{-\frac{1}{\dim \calL'}} 2C_8\delta_{\rmM}([g]) \geq 2\delta_{\rmM}([g]).
\]

Hence our proof is complete.

\end{proof}

\section{Proof of Theorem \ref{thmEpi} and \ref{thmUniform}}
\label{secProofThmGeneralized}

\begin{proof}[Proof of Theorem \ref{thmEpi}]
Indeed, by  \cite[Corollary 1.3]{ShahWeiss00}, for every $x\in \rmG/\Gamma$, $\overline{\rmF x}$ is $\rmH$-invariant, thus has to intersect some bounded set independent of $x$ nontrivially.
\end{proof}

\begin{proof}[Proof of Theorem \ref{thmUniform}]
First we claim that up to $\rmG$-conjugacy, there are only finitely many elements from $\calH$. 
For every dimension, up to isomorphism, there are only finitely many real semisimple algebras (see \cite{Knap02}). And for each of them, the Lie subalgebras of $\bmG$ that are isomorphic to this one form a finite orbit under $\rmG$ by \cite[Lemma A.1]{EinMarVen09} (see also \cite{Richard67}). Now we enumerate the corresponding real algebraic subgroups as $\{\bmM_1,...,\bmM_s\}$. 
Also fix a maximal $\R$-split torus $\bmS_i$ of $\bmZ_{\bmG}\bmM_i$.
By shortening the list, we assume that each $\bmH_i:=\bmM_i \cdot \bmS_i$ belongs to $\calH$.
We now argue that each $\bmH\in \calH$ is isomorphic to one of $\bmH_i$. Indeed, write $\bmH$ as an almost direct product $\bmM\cdot \bmS$. Then there exists $g \in \rmG$ such that $g \bmM g^{-1}=\bmM_i $ for some $i$. Then $g \bmS g^{-1}$ is a maximal $\R$-split torus of $\bmZ_{\bmG}\bmM_i$, thus there is some $h\in \bmZ_{\rmG}\rmM_i$ such that $hg \bmS g^{-1} h^{-1}=\bmS_i$ (see \cite[15.2.6]{Spr98}). Thus $hg \bmH g^{-1}h^{-1}= \bmH_i$.

As this is a finite list, we find a compact set $C$ of $\rmG/\Gamma$ such that for every $i$ with $\bmH_i\in\calH$ and every $x\in \rmG/\Gamma$, there exists $h\in\rmH_i$ with $hx \in C$. For each $i$, find a compact subgroup $K_i$ such that $Z_{\rmG}\rmH_i = K_i\cdot \rmS_i$. 
Let $C'$ be a larger compact subset containing $K_i \cdot C$ for all $i$'s. 
We fix some embedding of $\bmG \to \SL_N$ that induces a proper map $\rmG/\Gamma \to \SL_N(\R)/\SL_N(\Z)$. 
For each $i$, fix a nonempty bounded open set $\Omega_i$ in $\rmH_i$.

Thus by \cite[Theorem 1.1]{RicSha18}, for each $i$, there exists a closed subset $Y_i$ of $\rmG$ such that 
\begin{itemize}
    \item[1.] $\rmG= Y_i \cdot K_i \cdot \rmS_i$;
    \item[2.]  there exists some $c>0$ such that 
    \begin{equation*}
        \sup_{\omega \in \Omega_i} \norm{
        y\omega \cdot v 
        }\geq c\norm{v}\quad \forall i, \,y\in Y, \,v\in \R^N.
    \end{equation*}
\end{itemize}
Through the work of \cite{EskMozSha97} (or \cite{KleMar98}), this implies that there exists a compact set $C''\subset \rmG/\Gamma$ such that  for every $x\in C'$ and every $y \in Y_i$, 
\begin{equation*}
    y\Omega_i \cdot x  \cap C'' \neq \emptyset.
\end{equation*}
 By further using $\rmG= Y_i \cdot K_i \cdot \rmS_i$ and $K_i\cdot  C \subset C'$, we have that for every $x\in C$ and every $g\in \rmG$,
 \begin{equation*}
     g\rmH_i \cdot  x \cap C'' \neq \emptyset.
 \end{equation*}

Now take $x_0\in \rmG/\Gamma$ and $\bmH \in\calH$, we wish to show $\rmH \cdot x_0 \cap C''\neq \emptyset$.
First find $g_H\in \rmG$ such that $\bmH=g_H \bmH_i g_H^{-1}$ for some $i$. Then by Theorem \ref{thmMain},
\begin{equation*}
    \rmH\cdot x_0 = g_H \rmH_i g_H^{-1}\cdot x_0 =g_H \bmH_i \cdot x_0'\quad \exists \,x_0'\in C,
\end{equation*}
which intersect with $C''$ nontrivially. Hence we are done.

\end{proof}

\section{Geometric consequences}\label{secGeometric}

Main results in this section is a proof of Theorem \ref{thmGeometric} and a characterization of rigid totally geodesic submanifolds in Proposition \ref{propRigidChar}.
For backgrounds on symmetric spaces, our main references here are \cite{Helga01} and \cite{KozMau18}.

\subsection{Arithmetic quotients of symmetric spaces of noncompact type}
Let $\wtX$ be a connected global Riemannian symmetric space of noncompact type. Thus the identity connected component of isometry group of $\wtX$ is a connected semisimple Lie group $G$ and $\wtX$ is identified with the space of maximal compact subgroups of $G$. By fixing a maximal compact subgroup $K_0$ of $G$ , $X$ is identified with $K_0\bs G$. We are going to assume $G=\rmG:=\bmG(\R)^{\circ}$ for a connected $\Q$-algebraic group $\bmG$ and take $\Gamma \leq \rmG\cap \bmG(\Q)$ to be an arithmetic lattice. Call a locally symmetric space $X$ of the form $X= \wtX/\Gamma= K_0\bs \rmG / \Gamma$ arithmetic. We assume $\Gamma$ to be neat so that $X$ is a Riemannian manifold. 

For a maximal compact subgroup $K$ of $\rmG$, there exists a unique algebraic Cartan involution $\iota_{K}$ over $\R$ associated with $K$. 
If $\bmK$ is the fixed point of $\iota_{K}$ in $\bmG_{\R}$, then $K=\rmK:=\bmK(\R)^{\circ}$. 
By abuse of notation we also use $\iota_{K}$ to denote its induced action on the Lie algebra. Thus the Lie algebra $\frakk$ of $K$ is identified with those fixed by $\iota_{K}$. 
Let $\frakp_{\rmK}$ be the $(-1)$-eigenspace of $\iota_{K}$ in $\frakg$, the Lie algebra of $\rmG$. Then
\begin{equation*}
    \frakg= \frakk \oplus \frakp_{\rmK}.
\end{equation*}
For simplicity we write $\iota_0:= \iota_{\rmK_0}$ and $\frakp_0:=\frakp_{\rmK_0}$.

Let 
\begin{equation*}
    B(v,w):= - \Tr (\ad(v)\ad(\iota_{0} w))
\end{equation*}
be the positive definite bilinear form on $\frakg$, identified with the tangent space of $\rmG$ at $id$, associated with $\rmK_0$. By right translation, we get a right $\rmG$-invariant Riemannian metric on $\rmG$. This metric is also left $\rmK_0$-invariant.
This metric thus induces metrics on $\rmG/\Gamma$, $\rmK_0\bs \rmG/\Gamma$ and their closed submanifolds.
All the ``$\Vol$'' appearing below will be referred to measures induced from this metric. Up to scalars (to be more precise, for each irreducible factor there is a positive scalar), the original Riemannian metric on $X$ coincides this one.

\subsection{Totally geodesic submanifolds}

A totally geodesic submanifold $\wtY$ of $\wtX$ is again a symmetric space. By \cite[Theorem 7.2]{Helga01}, there exists a triple system $\fraks_0 \subset \frakp_{0}$, i.e. $[x,[y,z]]\in \fraks_0$ if $x,y,z \in \fraks_0$ and $g\in \rmG$, such that 
$\wtY= \rmK_0\bs \rmK_0 \exp(\fraks_0)g $. Then $\frakh_Y:= \fraks_0 + [\fraks_0 ,\fraks_0 ]$  is a $\iota_0$-stable Lie subalgebra. By writing $H_0$ for the corresponding Lie subgroup, we have $\wtY= \rmK_0\bs \rmK_0  H_{0} g $. The choice of $\fraks_0$ is not unique but it is understood when we write $\wtY= \rmK_0\bs \rmK_0  H_{0} g $.

Now let $Y \subset X$ be an embedded totally geodesic submanifold, by choosing a lift of some point of $y\in Y$, we have a unique totally geodesic submanifold $\wtY$ of $\wtX$ who projects to $Y$ as a local isometry. Thus $Y= \rmK_0\bs \rmK_0  H_{0} g\Gamma/\Gamma$. It is not hard to verify that $H_0 g \Gamma/\Gamma$ is also closed in $\rmG/\Gamma$.

We are going to be interested in finite-volume embedded totally geodesic submanifold $Y$ of $X$. Using the notation as in the last paragraph, one can verify from the definition that for such a $Y$,
\begin{equation}\label{equaVolumeRelation}
    \Vol(H_0 g \Gamma/\Gamma) = \Vol(Y) \cdot \Vol(\rmK_0\cap  H_{0}).
\end{equation}

\subsection{Rigid totally geodesic submanifolds}

\begin{defi}
Fix a natural number $N$, let 
\begin{equation*}
    \calT\calG^N:=
\left\{
Y \subset X\;\middle\vert\;
\parbox{70mm} 
    {
    \raggedright   $Y$ {is a embedded totally geodesic submanifold of }$X$, $\Vol(Y)<\infty$, $\dim Y=N$
    }
\right\}
\end{equation*}
be equipped with the Chabauty topology (see \cite[E.1]{BenePetr92}). We say that $Y \in \calT\calG^N$ is \textbf{rigid} if $\{Y\}$ is open in $\calT\calG^N$. The collection of such $Y$'s are denoted as $\calT\calG^{N,\rigid}$.
\end{defi}

Take $Y := \rmK_0 \bs \rmK_0  H_{0}g \Gamma/\Gamma \in \calT\calG^{N} $. 
We can write 
$\frakz_{\frakg}(\frakh_Y)= \frakk_{Y} \oplus \frakz(\frakh_Y) $ for some $\iota_0$-stable subalgebra $\frakk_{Y}$ centralizing $\frakz(\frakh_Y)$
where $\frakz_{\frakg}(\frakh_Y)$ is the centralizer of $\frakh_Y$ in $\frakg$ and $\frakz(\frakh_Y)$ is the center of $\frakh_Y$.  If $Y$ is rigid, then $\frakk_{Y}$ is contained in $\frakk_0$ for otherwise there exists $v_{\neq 0}\in \frakk_{Y}\cap \frakp_0$ and 
\begin{equation*}
    Y_t:= \rmK_0 \bs \rmK_0 \exp(vt)  H_{0}g \Gamma/\Gamma \to Y 
    \quad\text{as } t \to 0
\end{equation*}
in $\calT\calG^N$ and $Y_t \neq Y$ for $t\neq 0$.
The converse is also true.

\begin{prop}\label{propRigidChar}
Notations as above. Then $Y\in \calT\calG^{N}$ is rigid iff  $\frakk_{Y}$ is contained in $\frakk_0$. In this case $\frakh_Y$ is algebraic.
\end{prop}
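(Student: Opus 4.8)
The plan is to prove both directions of the equivalence and then deduce algebraicity. The discussion preceding the statement already establishes the forward implication: if $Y$ is rigid, then $\frakk_Y \subseteq \frakk_0$, because a nonzero vector $v \in \frakk_Y \cap \frakp_0$ produces a genuine one-parameter deformation $Y_t = \rmK_0\bs \rmK_0\exp(vt)H_0 g\Gamma/\Gamma$ converging to $Y$ with $Y_t \neq Y$ for small $t \neq 0$, contradicting openness of $\{Y\}$ in $\calT\calG^N$. So the substance is the converse: assuming $\frakk_Y \subseteq \frakk_0$, I must show $\{Y\}$ is open in the Chabauty topology.

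The key point is that a nearby element $Y' \in \calT\calG^N$ corresponds to a triple system $\fraks_0'$ close to $\fraks_0$, hence to a Lie subalgebra $\frakh_{Y'} = \fraks_0' + [\fraks_0',\fraks_0']$ close to $\frakh_Y$, together with a translate; Chabauty-closeness of $Y'$ to $Y$ forces closeness both of the subalgebras (as points of the appropriate Grassmannian of subspaces of $\frakg$) and of the translating cosets. First I would argue that the set of $\iota_0$-stable Lie triple systems of dimension $N$ contained in $\frakp_0$ that arise from finite-volume totally geodesic submanifolds, modulo the relevant conjugation, is discrete near $\fraks_0$: this is where the hypothesis $\frakk_Y \subseteq \frakk_0$ enters, since the ``tangent directions'' to deformations of $\wtY$ inside $\wtX$ fixing the base point are exactly governed by $\frakz_{\frakg}(\frakh_Y)\cap\frakp_0 = \frakk_Y\cap\frakp_0$ (the $\frakz(\frakh_Y)\cap\frakp_0$ part does not move $\wtY$), and this space is now zero. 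Thus $\wtY$ is infinitesimally rigid as a totally geodesic submanifold of $\wtX$ through a given point. Second, I would handle the translation / arithmetic part: the condition $\Vol(Y) < \infty$ together with Equation \eqref{equaVolumeRelation} shows $H_0 g\Gamma/\Gamma$ is a finite-volume closed orbit, and nearby finite-volume totally geodesic submanifolds correspond to nearby closed orbits of nearby subgroups; here one invokes uniform non-divergence (Theorem \ref{thmMain}, or its variants) applied to a reductive group built from $\frakh_Y$ to prevent the nearby pieces from escaping to infinity, and then a standard rigidity/discreteness argument for closed orbits of reductive subgroups to conclude they must coincide with $H_0 g\Gamma/\Gamma$ locally.

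For the final assertion, that $\frakh_Y$ is algebraic when $Y$ is rigid: I would use that $\frakh_Y = \fraks_0 + [\fraks_0,\fraks_0]$ is $\iota_0$-stable and reductive (being the isometry algebra direction of a symmetric space $\wtY$ of noncompact type, modulo its flat factor), write $\frakh_Y = \frakz(\frakh_Y) \oplus [\frakh_Y,\frakh_Y]$, note the semisimple part $[\frakh_Y,\frakh_Y]$ is automatically algebraic, and then show the central torus $\frakz(\frakh_Y)$ is algebraic using that rigidity forces its noncompact part to be ``rational enough'' — more precisely, the closedness of $H_0 g\Gamma/\Gamma$ together with $\frakk_Y \subseteq \frakk_0$ pins down $\frakz(\frakh_Y)\cap\frakp_0$ as a subspace defined over $\Q$ after conjugation, so its Zariski closure adds nothing new to $\frakh_Y$.

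The main obstacle I expect is the second step: converting Chabauty-closeness of the submanifolds $Y'$ into simultaneous control of (a) the subalgebra $\frakh_{Y'}$ and (b) the translate $g'\Gamma$, and then ruling out small nontrivial perturbations of the translate that keep the orbit closed and of bounded volume. This is exactly the kind of place where one needs the non-divergence results of the paper to keep the family of candidate orbits in a compact part of $\rmG/\Gamma$, followed by an argument that closed orbits of a fixed reductive group through nearby points must either coincide or be uniformly separated. The infinitesimal rigidity input from $\frakk_Y \subseteq \frakk_0$ is what makes (a) rigid; getting (b) rigid simultaneously, uniformly in the Chabauty neighbourhood, is the delicate part.
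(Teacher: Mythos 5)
Your high-level decomposition is right, and the forward implication (rigid implies $\frakk_Y\subseteq \frakk_0$) is exactly as in the paper: a nonzero $v\in\frakk_Y\cap\frakp_0$ gives the one-parameter family $Y_t=\rmK_0\bs\rmK_0\exp(tv)\rmH_0g\Gamma/\Gamma$ converging to $Y$, contradicting openness. Your correctly flag the delicate point: the converse requires controlling both the subalgebra $\frakh_{Y'}$ and the translate simultaneously from Chabauty-closeness of the submanifolds. But there is a genuine gap in how you propose to close it.

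You cite Theorem~\ref{thmMain} (uniform non-divergence) followed by ``a standard rigidity/discreteness argument for closed orbits of reductive subgroups.'' Non-divergence is not the right tool here, and the second half of the sentence is precisely the nontrivial content that needs a proof. Non-divergence prevents orbits from escaping to the cusps, but it says nothing about whether a sequence of nearby finite-volume orbits $\rmH'_i\ep_i\Gamma/\Gamma$ with $\ep_i\to\mathrm{id}$ must eventually coincide with $\rmH_0\Gamma/\Gamma$; a priori the translates could drift along a noncompact direction in $Z_\rmG(\rmH_0)$ or hit infinitely many distinct lattice translates of a fixed orbit. The paper's proof instead invokes the Borel density lemma (so each $\bmH'_i$ is defined over $\Q$) and then the Mozes--Shah theorem on limits of translates of finite-volume homogeneous orbits, which shows the semisimple parts $\rmM'_i$ are eventually contained in, and then equal to, $\rmM_0$. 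After that it still takes a further careful argument, passing to $Z_\rmG(\rmM_0)^\circ\rmM_0$ and then to quotients by $\rmM_0$, $\pi(\rmK_\rmZ)$, and $\pi(\Gamma_\rmL\cap\rmA_0)$, to pin down the abelian part $\rmA'_i$. The hypothesis $\frakk_Y\subseteq\frakk_0$ is used at a specific step: it identifies $(\rmK_0\cap N_\rmG(\rmH_0))^\circ$ with $\rmK_\rmZ$, which is what confines the limit $\lim\wtY_i$ to finitely many $\rmH_0\rmK_\rmZ l_{x_s}$-pieces. Your ``infinitesimal rigidity'' framing of that hypothesis is morally in the right direction but is not how it enters the actual argument.

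Your algebraicity argument also takes a different (and slightly off-target) route. You propose showing $\frakz(\frakh_Y)\cap\frakp_0$ is ``defined over $\Q$ after conjugation,'' but the claim is only $\R$-algebraicity, not $\Q$-rationality, and no arithmetic input is needed. The paper's Lemma~\ref{lemmaRigidisAlgebraic} is purely Lie-algebraic: write $\frakh_Y=\fraka\oplus\frakm$ with $\frakm$ semisimple (hence algebraic by Borel II.7.9) and observe that $\fraka$ is exactly the $(-1)$-eigenspace of the algebraic involution $\iota_0$ inside the algebraic subalgebra $\frakz_\frakg(\frakh_Y)$, which uses the rigidity hypothesis only to ensure that this $(-1)$-eigenspace is $\fraka$ and nothing more.
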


The second claim follows from the following lemma. The proof of the rest of the claim is delayed to the next section.
Recall that a Lie subalgebra of the Lie algebra of a linear algebraic group over $\R$ is said to be algebraic iff its the Lie algebra of some algebraic subgroup over $\R$ (see \cite[Chapter II.7]{BorLinearAG}).

\begin{lem}\label{lemmaRigidisAlgebraic}
Let $\frakh$ be a $\iota_0$-stable Lie subalgebra of $\frakg$ with no compact factors. Assume that all noncompact factors of $\frakz_{\frakg}\frakh$ are contained in $\frakh$, then 
\begin{itemize}
    \item[1.] $\frakh$ is algebraic;
    \item[2.] identity coset is the unique $x\in \rmK_0\bs \rmG$ such that $x \rmH$ is totally geodesic;
    \item[3.] there exists a finite list (only depends on $\rmG$, $\rmK_0$) $\{\frakh_1,...\frakh_k\}$ satisfying the same condition as $\frakh$ does such that $\frakh$ is conjugate to one of them via $\rmK_0$.
\end{itemize}
\end{lem}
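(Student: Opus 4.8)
The plan is to prove Lemma \ref{lemmaRigidisAlgebraic} by reducing everything to the structure theory of the centralizer $\frakz_{\frakg}(\frakh)$ relative to the Cartan involution $\iota_0$. First I would establish part (1). Since $\frakh$ has no compact factors and is semisimple (being perfect: $\frakh = [\frakh,\frakh]$ once we know $\frakh = \fraks_0 + [\fraks_0,\fraks_0]$ and... actually $\frakh$ need not be semisimple a priori, so I would first split $\frakh = \frakz(\frakh) \oplus [\frakh,\frakh]$ with $[\frakh,\frakh]$ semisimple without compact factors). The semisimple part $[\frakh,\frakh]$ is automatically algebraic (the Lie algebra of a semisimple group is algebraic — semisimple Lie groups are algebraic). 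For the center $\frakz(\frakh)$: the hypothesis says every noncompact factor of $\frakz_{\frakg}(\frakh)$ lies inside $\frakh$, and $\frakz(\frakh) \subseteq \frakz_{\frakg}(\frakh)$. The Zariski closure of the abelian subalgebra $\frakz(\frakh)$ splits as a sum of a semisimple (diagonalizable) part and a unipotent part; one shows the unipotent part would have to be a noncompact abelian subalgebra of $\frakz_{\frakg}(\frakh)$ not contained in a compact group, hence (by the hypothesis, after projecting) it must already lie in $\frakh$, forcing $\frakz(\frakh)$ itself to be algebraic. More carefully: write $\frakz_{\frakg}(\frakh)$, a reductive-in-$\frakg$ subalgebra since $\frakh$ acts reductively (it is $\iota_0$-stable, so self-adjoint under the Killing form, hence reductive — this is where $\iota_0$-stability is crucial), and decompose it $\iota_0$-invariantly; its center together with its noncompact semisimple part and compact semisimple part are all algebraic, and the hypothesis pins $\frakh$ between $[\frakh,\frakh]$ and a sum of algebraic pieces.

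For part (2), I would use the characterization of totally geodesic orbits. An orbit $x\rmH$ (for $x = \rmK_0 g$) is totally geodesic iff $\Ad(g)\frakh$ is again a triple system adapted to $\iota_0$, equivalently $\Ad(g)\frakh$ is $\iota_0$-stable and of the form $\fraks + [\fraks,\fraks]$ with $\fraks \subseteq \frakp_0$. Since $\frakh$ itself is $\iota_0$-stable, this holds at $g = e$. To see uniqueness: if $\Ad(g)\frakh$ is also $\iota_0$-stable, then both $\iota_0$ and $g \iota_0 g^{-1}$ (the Cartan involution attached to the maximal compact $g\rmK_0 g^{-1}$... I need to be careful about left vs right cosets here) are Cartan involutions preserving $\frakh$, hence restrict to Cartan involutions of $\frakh$; two Cartan involutions of a semisimple-without-compact-factor algebra are conjugate by an inner automorphism in $\rmH$ itself, and one deduces $g$ normalizes $\frakh$ modulo something in $\rmK_0 \rmH$ — more precisely, $g \in \rmK_0 \cdot Z_\rmG(\rmH) \cdot \rmH$, and then the hypothesis that the noncompact part of $Z_\rmG(\rmH)$ lies in $\rmH$ kills the $Z_\rmG(\rmH)$-ambiguity up to a compact factor, which is absorbed into $\rmK_0$. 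So $\rmK_0 g = \rmK_0$ in $\rmK_0\bs\rmG$. This is the step I expect to be the main obstacle: carefully tracking the conjugation-of-Cartan-involutions argument and showing the compact leftover genuinely sits in a maximal compact containing $\rmK_0 \cap \rmH$, and matching it against the $\rmK_0\bs\rmG$ quotient convention used throughout the section.

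For part (3), I would argue by finiteness of conjugacy classes. As in the proof of Theorem \ref{thmUniform}, for each dimension there are only finitely many isomorphism types of real semisimple (or reductive) Lie algebras, and for each, the set of subalgebras of $\frakg$ isomorphic to it is a finite union of $\rmG$-orbits by \cite[Lemma A.1]{EinMarVen09}. Intersecting with the $\iota_0$-stability condition and the centralizer condition, I would then show that within a single $\rmG$-orbit of such subalgebras, the $\iota_0$-stable representatives form finitely many $\rmK_0$-orbits: if $\frakh$ and $\Ad(g)\frakh$ are both $\iota_0$-stable, then by the argument in part (2), $g \in \rmK_0 \cdot N_\rmG(\frakh)$ (using the centralizer hypothesis to control $Z_\rmG(\rmH)$), and $N_\rmG(\frakh)/\rmH$ is finite-by-compact (finitely many components times a compact piece, since $\Out(\frakh)$ is finite and the rest sits in $Z_\rmG(\rmH)$ whose noncompact part is in $\rmH$), so $\Ad(g)\frakh$ ranges over finitely many $\rmK_0$-orbits. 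Assembling these gives the finite list $\{\frakh_1,\dots,\frakh_k\}$ depending only on $\rmG,\rmK_0$. Throughout, the routine verifications (that $\iota_0$-stability forces reductivity, that semisimple-without-compact-factors Lie algebras are algebraic, that Cartan involutions are conjugate) I would cite rather than reprove.
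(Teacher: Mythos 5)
Your overall plan matches the paper's, but for part (1) there is a concrete gap, and for parts (2) and (3) you are reconstructing an argument that the paper simply outsources to \cite[Section 2]{KozMau18}.

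\textbf{Part (1).} Both you and the paper split $\frakh = \fraka \oplus \frakm$ (abelian $\oplus$ semisimple) and handle $\frakm$ via the standard theorem that semisimple subalgebras are algebraic. The difficulty is $\fraka$, and here your argument doesn't close. Your first attempt appeals to a ``unipotent part'' of the Zariski closure of $\fraka$; but $\fraka \subset \frakp_0$ consists of semisimple elements, so the Zariski closure $\overline{\fraka}$ is the Lie algebra of a torus with no unipotent part at all — the danger is not a unipotent direction, it is that $\overline{\fraka}$ may be a \emph{strictly bigger torus} (take a generic line in the diagonal of $\fraksl_3(\R)$, whose Zariski closure is the full 2-torus). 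Your ``more careful'' version, ending with ``the hypothesis pins $\frakh$ between $[\frakh,\frakh]$ and a sum of algebraic pieces,'' is not an argument: sitting between two algebraic subalgebras does not make a subalgebra algebraic. The paper closes the gap with one precise observation you are missing: under the hypothesis, $\fraka$ \emph{equals} the $(-1)$-eigenspace of $\iota_0$ inside $\frakz_{\frakg}\frakh$ (the compact factors of $\frakz_{\frakg}\frakh$ don't meet $\frakp_0$, and by hypothesis there are no other noncompact directions). Since both $\iota_0$ and $\frakz_{\frakg}\frakh$ are algebraic, so is this eigenspace, hence so is $\fraka$. Without this explicit algebraic characterization of $\fraka$, algebraicity does not follow.

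\textbf{Parts (2) and (3).} The paper handles these in one sentence by citing \cite[Section 2]{KozMau18}, where they are proved for semisimple $\frakh$, and noting the same proof works with a central $\fraka$ attached. Your conjugacy-of-Cartan-involutions argument for (2) and your finiteness-of-conjugacy-classes argument for (3) (via \cite[Lemma A.1]{EinMarVen09} and the structure of $N_{\rmG}(\frakh)$) are essentially a reconstruction of the Kozma--Maucourant proof rather than a different route. That is a reasonable thing to do if you want a self-contained account, but be aware you are re-deriving a cited result, not replacing it. You also correctly flag the step that requires the most care — showing $g \in \rmK_0\cdot N_{\rmG}(\frakh)$ and controlling the leftover $Z_{\rmG}(\rmH)$ factor using the centralizer hypothesis — and this is indeed where the details live in \cite{KozMau18}. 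As written your sketch does not fill them in, so the proposal should be regarded as incomplete on (2)/(3) as well, though the approach is sound.
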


\begin{proof}
Write $\frakh=\fraka\oplus \frakm$ for some abelian subalgebra $\fraka$ in $\frakp_0$ and semisimple Lie subalgebra $\frakm$. Moreover $\fraka$ commutes with $\frakm$. By \cite[II.7.9]{BorLinearAG}, $\frakm$ is algebraic. 
As $\frakz_{\frakg}\frakh$ is algebraic and $\fraka$ is characterized as the ($-1$)-eigenspace of $\iota_0$, which is algebraic, in $\frakz_{\frakg}\frakh$, we have $\fraka $ is algebraic. Hence $\frakh$ is algebraic.

Item 2. and 3. have been proved in \cite[Section 2]{KozMau18} under the additional assumption that $\frakh$ is semisimple. But the same proof presented there also works without this assumption.
\end{proof}

Item 2. and 3., together with Equa. \ref{equaVolumeRelation} imply the following
\begin{lem}\label{lemmaVolBounded}
There exists a constant $C_{11}>1$ such that
\begin{equation*}
   C_{11}^{-1} \Vol (Y)\leq \Vol( H_0g\Gamma/\Gamma ) \leq C_{11} \Vol (Y)
\end{equation*}
for all $Y$ in $\calT\calG^{N,\rigid}$.
\end{lem}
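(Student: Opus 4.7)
The plan is to apply Equation \ref{equaVolumeRelation}, which writes $\Vol(H_0 g\Gamma/\Gamma) = \Vol(Y) \cdot \Vol(\rmK_0\cap H_0)$, and thereby reduce the lemma to showing that $\Vol(\rmK_0\cap H_0)$ takes only finitely many, strictly positive, values as $Y$ ranges over $\calT\calG^{N,\rigid}$. Since $N$ is fixed and there are only finitely many real semisimple algebras up to isomorphism, a finiteness-up-to-conjugacy statement is a natural route.

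First I would check that the hypothesis of Lemma \ref{lemmaRigidisAlgebraic} is satisfied. For $Y \in \calT\calG^{N,\rigid}$ Proposition \ref{propRigidChar} gives $\frakk_Y \subset \frakk_0$, so in the decomposition $\frakz_{\frakg}(\frakh_Y)=\frakk_Y\oplus\frakz(\frakh_Y)$ all noncompact factors of $\frakz_{\frakg}(\frakh_Y)$ lie inside $\frakz(\frakh_Y)\subset \frakh_Y$. Any compact factor of $\frakh_Y$ itself would, via $\iota_0$-stability, contribute a nontrivial piece of $\frakh_Y\cap \frakk_0$ disjoint from $\frakp_0$, contradicting $\frakh_Y = \fraks_0+[\fraks_0,\fraks_0]$ with $\fraks_0\subset \frakp_0$; so $\frakh_Y$ has no compact factors. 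Lemma \ref{lemmaRigidisAlgebraic}(3) then supplies a finite list $\frakh_1,\dots,\frakh_k$, depending only on $(\rmG,\rmK_0,N)$, such that $\frakh_Y = \Ad(k)\frakh_i$ for some $k\in\rmK_0$ and some $i$. Passing to connected Lie subgroups, $H_0 = kH_ik^{-1}$.

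Next I would verify that $\rmK_0$-conjugation preserves volume on closed subgroups of $\rmK_0$. Since $\rmK_0$ is pointwise fixed by $\iota_0$, $\Ad(k)$ commutes with $\iota_0$ for $k\in\rmK_0$, so $B|_{\frakk_0}$ is $\Ad(\rmK_0)$-invariant; consequently the Riemannian metric restricted to $\rmK_0$ is bi-invariant, making inner automorphisms isometries of $\rmK_0$. Hence $\rmK_0\cap H_0 = k(\rmK_0\cap H_i)k^{-1}$ has the same Riemannian volume as $\rmK_0\cap H_i$.

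Finally each $\rmK_0\cap H_i$ is a nonempty compact Lie subgroup of $\rmK_0$, whose volume in dimension $\dim(\frakk_0\cap\frakh_i)$ is strictly positive (or, if zero-dimensional, its counting measure is at least $1$). Taking $C_{11}:=\max_{i}\bigl\{\Vol(\rmK_0\cap H_i),\,\Vol(\rmK_0\cap H_i)^{-1}\bigr\}$ then yields the uniform two-sided bound. The principal obstacle — and really the only nontrivial step — is the invocation of Lemma \ref{lemmaRigidisAlgebraic}(3) and the verification that rigid $\frakh_Y$'s actually satisfy its hypotheses; everything after that is a short averaging/conjugation argument.
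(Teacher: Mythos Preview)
Your argument is correct and follows exactly the paper's route: the paper's entire proof is the sentence preceding the lemma, ``Item 2.\ and 3., together with Equa.~\ref{equaVolumeRelation} imply the following,'' and you have supplied the natural details, chiefly that item~3 of Lemma~\ref{lemmaRigidisAlgebraic} yields finitely many $\rmK_0$-conjugacy classes of $\frakh_Y$ and hence finitely many values of $\Vol(\rmK_0\cap H_0)$. One minor imprecision worth tightening: in your ``no compact factors'' step the contradiction is not merely that a compact ideal $\mathfrak{c}$ sits inside $\frakk_0$, but that once $\mathfrak{c}\subset\frakk_0$ one has $[\mathfrak{c},\fraks_0]\subset\mathfrak{c}\cap\frakp_0=0$, so $\mathfrak{c}$ is central in $\frakh_Y=\fraks_0+[\fraks_0,\fraks_0]$, which is impossible for a nonzero semisimple ideal.
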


\subsection{Proof of Proposition \ref{propRigidChar}}
It remains to show that, assuming $\frakk_{Y}$ is contained in $\frakk_0$, for a sequence of 
$Y_i=  \rmK_0 \bs \rmK_0 H_{Y_i} g_i \Gamma/\Gamma $ converging to $Y=  \rmK_0 \bs \rmK_0 \rmH_{0} g_0 \Gamma/\Gamma$, then $Y_i=Y$ for $i$ sufficiently large.
Replacing $\Gamma$ by $g_0 \Gamma g_0^{-1}$, assume $g_0=id$. 
Also write $\frakh_0 = ( \frakk_{\frakh_0}\cap \frakh_0) \oplus \fraks_0$ where $\fraks_0$ is a triple system associated with $Y$. Also write $\rmH_0= \rmA_0 \cdot \rmM_0$ (at the level of Lie algebra, $\frakh_0=\fraka_0\oplus\frakm_0$) as an almost product between its center and the semisimple part.

For an element $g\in \rmG$, let ${}_{\rmK_0}[g]_{\Gamma}$ (resp. $[g]_{\Gamma}$, ${}_{\rmK_0}[g]$) denote its image in $\rmK_0 \bs \rmG/\Gamma$ (resp. $\rmG/\Gamma$, $\rmK_0 \bs \rmG$).

From definition,  
\begin{equation*}
    k_i h_{Y_i} g_i \gamma_i =\ep_i
\end{equation*}
for some sequences of $k_i \in \rmK_0$, $h_{Y_i}\in H_{Y_i}$, $\gamma_i \in \Gamma$ and $\ep_i \in \rmG$ with $\ep_i \to id$.
Thus 
\begin{equation*}
    Y_i=  \rmK_0 \bs \rmK_0 H_{i} \ep_i \Gamma/\Gamma,
    \quad \text{with }H_i:= k_i H_{Y_i}k_i^{-1}.
\end{equation*}
Let $\fraks_i:= \Ad(k_i)\cdot \fraks_{Y_i}$ (recall $\fraks_{Y_i}\subset \frakp_0$ is the triple system associated with $Y_i$). For $\delta>0$, let $B_{\frakp_0,\delta}$ be the open ball of radius $\delta$ in $\frakp_0$. We choose $\delta>0$ small enough such that 
\begin{equation*}
    B_{\frakp_0,\delta} \to \rmK_0\bs \rmG/\Gamma:\;\; 
    v \mapsto {}_{\rmK_0}[\exp(v)]_{\Gamma}
\end{equation*}
is a homeomorphism onto its image.  By passing to a subsequence assume $\fraks_i$ converges to $\fraks_{\infty}$. Then $\fraks_{\infty}$ is still a triple system and hence $\frakh_{\infty}:=\fraks_{\infty}\oplus [\fraks_{\infty},\fraks_{\infty}]$ is a $\iota_0$-stable Lie subalgebra. 
Also ${}_{\rmK_0}[\exp(\fraks_i \cap B_{\frakp_0,\delta})]_{\Gamma}$  converges to ${}_{\rmK_0}[\exp(\fraks_{\infty} \cap B_{\frakp_0,\delta})]_{\Gamma}$, which must be contained in $\fraks_0$ by assumption.  Since they share the same dimension we conclude that $\frakh_{\infty}=\frakh_{0}$. Thus $\frakh_i$'s are all algebraic by Lemma \ref{lemmaRigidisAlgebraic}.

We would like to understand the limiting behavior of $$\wtY_i:= \rmH_i\ep_i\Gamma/\Gamma$$ in $\rmG/\Gamma$. So far we know that 
\begin{itemize}
    \item[1.]  $\lim\wtY_i$ is a closed $ \rmH_{0}$-invariant set;
    \item[2.] $\lim\wtY_i \supset \wtY :=  \rmH_{0}\Gamma/\Gamma$;
    \item[3.] $\lim\wtY_i \subset \rmK_0  \rmH_{0} \Gamma /\Gamma$.
\end{itemize}

Assume $[k_1g_1]_{\Gamma} \in \lim\wtY_i $ for some $k_1\in\rmK_0$ and $g_1 \in  \rmH_{0}$. Then by item 1. above, 
$\lim\wtY_i \supset \rmH_{0} k_1g_1 \Gamma /\Gamma$ and in particular 
\begin{equation*}
\begin{aligned}
        &\rmK_0 \bs \rmK_0  \rmH_{0}\Gamma/\Gamma=
    \lim \rmK_0 \bs \rmK_0 \rmH_i \ep_i \Gamma/\Gamma \supset 
    \rmK_0 \bs \rmK_0  \rmH_{0} k_1 g_1 \Gamma/\Gamma.\\
    \implies &
    \rmK_0 \bs \rmK_0  \rmH_{0}g_1\Gamma/\Gamma\supset  \rmK_0 \bs \rmK_0 (k_1^{-1}\rmH_0 k_1) g_1 \Gamma/\Gamma.
\end{aligned}
\end{equation*}
Thus $\fraks_0$ contains $\Ad(k_1^{-1})\cdot \fraks_0$. 
And since they have the same dimensions,  $\fraks_0=\Ad(k_1^{-1})\cdot \fraks_0$, which implies that $k_1\in N_{\rmG}(\rmH_0)$, the normalizer of $\rmH_0$ in $\rmG$. Therefore item 3. above is upgraded to 
\begin{itemize}
    \item[4.] $\lim\wtY_i \subset (\rmK_0\cap N_{\rmG}(\rmH_0))   \rmH_{0} \Gamma /\Gamma \subset N_{\rmG}(\rmH_0)\Gamma/\Gamma$.
\end{itemize}

Let $\rmH_i':= \ep_i^{-1}\rmH_i\ep_i$ and $\frakh'_i:=\Ad(\ep_i^{-1})\frakh_i$.
Decompose $\frakh'_{i}=\fraka'_{i}\oplus \frakm'_{i}$ into an abelian ideal $\fraka_{Y_i}$ and a semisimple ideal $\frakm_{Y_i}$. 
Both $\fraka'_{i}$ and $\frakm'_{i}$ are $\iota_0$-stable. 
Write $\rmA'_i$ and $\rmM'_i$ for the associated Lie subgroups.
By Borel density lemma (use the version in \cite[Corollary 4.2]{DaniBorelDesity}), $\bmH_i'$ is defined over $\Q$. Thus $\bmM'_i$ and $\bmA'_i$ are also defined over $\Q$. As $Y_i$ has finite volume, it follows that $\rmM'_i\Gamma/\Gamma$ and $\rmA'_i\Gamma/\Gamma$ have finite volume.

By \cite[Theorem 1.1]{MozShah95}, there exists a connected $\R$-split subgroup $\bmF$  of $\bmG$ such that $[\rmF]_{\Gamma}$ has finite volume, $[\rmM_i']_{\Gamma}$ converges to $[\rmF]_{\Gamma}$ and moreover,
there exists $\delta_i\in \rmG$ converging to $id$ such that $[\delta_i\rmM_i ]_{\Gamma}$ is contained in $[\rmF]_{\Gamma}$ for $i$ large enough.
From the latter, it can be shown that $\rmM'_i$ is contained in $\rmF$ for $i$ large enough.
On the other hand, the limit of $[\rmM'_i ]_{\Gamma}$ is contained in $[N_{\rmG}(\rmH_0)]_{\Gamma}$, thus $\rmF$ is contained in $N_{\rmG}(\rmH_0)$. As $\rmF$ is connected and the Lie algebra of $N_{\rmG}(\rmH_0)$ is the same as that of $Z_{\rmG}(\rmH_0)^{\circ}\rmH_0$. We conclude that $\rmF$ is contained in $Z_{\rmG}(\rmH_0)^{\circ}\rmH_0$. But $\rmF$ is semisimple and contains $\rmM'_i$, thus we must have $\rmM_0$, the semisimple part of $\rmH_0$, is exactly equal to $\rmF$. Hence $\rmM_i'=\rmF=\rmM_0$.

Thus we have seen that (write $\wtY_i':= \ep_i^{-1}\wtY_i$)
\begin{itemize}
    \item[5.] $\wtY'_i=  \rmA'_i \rmM_0\Gamma/\Gamma \subset Z_{\rmG}(\rmM_0)^{\circ} \rmM_0\Gamma/\Gamma$ for $i$ large enough.
\end{itemize}
So to find the limit of $\wtY'_i$, it suffices to consider
\begin{equation*}
    \lim [\rmA'_i\rmM_0 ]_{\Gamma_{Z}} \text{ inside }
    Z_{\rmG}(\rmM_0)^{\circ}\rmM_0/ \Gamma \cap (  Z_{\rmG}(\rmM_0)^{\circ}\rmM_0).
\end{equation*}

Now we have all the ingredients to conclude the proof. Some definitions and notations are introduced to ease the argument.

Let $\rmL:= Z_{\rmG}(\rmM_0)^{\circ}\rmM_0$ and $\Gamma_{\rmL}:= \Gamma \cap \rmL$. 
Let $\pi: \rmL \to \rmL/\rmM_0$ be the natural quotient map and it induces $\pi' : \rmL/\Gamma_{\rmL} \to \pi(\rmL)/\pi(\Gamma_{\rmL})$. Note that $\pi(\Gamma_{\rmL})$ is  still a lattice in $\pi(\rmL)$.
Let $\rmK_{\rmZ}:= \rmK_0 \cap Z_{\rmG}(\rm\rmH_0)^{\circ} = ( \rmK_0 \cap Z_{\rmG}(\rm\rmH_0))^{\circ}$. 
For an element (or a subset) $x$ of $\pi(\rmL)$, as before, we let ${}_{\pi(\rmK_{\rmZ})}[x]_{\pi(\Gamma_{\rmL})}$ be its image in $\pi(\rmK_{\rmZ})\bs\pi(\rmL)/\pi(\Gamma_{\rmL})$. Other similar notations are also defined . Note that since $\rmA_0$ commutes with $\rmK_{\rmZ}$, $\pi(\rmA_0)$ acts from the left on $\pi(\rmK_{\rmZ})\bs\pi(\rmL)/\pi(\Gamma_{\rmL})$.

Let $\{x_1,...,x_c\}\subset N_{\rmG}(\rmH_0)\cap \rmK_0$ be a set of representatives for the quotient $N_{\rmG}(\rmH_0)\cap \rmK_0/ (N_{\rmG}(\rmH_0)\cap \rmK_0)^{\circ}$.
In light of item 4. and 5. above, only those $x_s$'s contained in $\rmL\Gamma$ are interesting to us. By rearranging the order, we find $1\leq c_0\leq c$ such that for each $1\leq s \leq c_0$, there exists $l_{x_s}\in \rmL$, $\gamma_{x_s}\in\Gamma$ such that \begin{equation*}
    x_s= l_{x_s} \gamma_{x_s}.
\end{equation*}

Now we start the argument.
Item 4. above implies that 
\begin{equation*}
\begin{aligned}
          \lim\wtY_i'=\lim \rmA_i' \rmM_0\Gamma/\Gamma \subset& 
           \bigcup_{s=1,..,c_0}    \rmH_{0} (N_{\rmG}(\rmH_0)\cap \rmK_0)^{\circ}l_{x_s}\Gamma /\Gamma\\
           =&  \bigcup_{s=1,..,c_0}    \rmH_{0} \rmK_{\rmZ} l_{x_s}\Gamma /\Gamma
\end{aligned}
\end{equation*}
because $ \rmH_{0}\rmK_{\rmZ} =  \rmH_{0}(Z_{\rmG}(\rmH_0)\cap \rmK_0)^{\circ}=  \rmH_{0} (N_{\rmG}(\rmH_0)\cap \rmK_0)^{\circ}$.

As $(\fraka'_i)$ converges to $\fraka_0$ and they correspond to maximal $\R$-split subtori of $\bmZ_{\bmG}(\bmM_Y)$, there exists $\ep'_i \to id$ in $\rmL$ such that
\begin{equation*}
    \ep'_i\rmA'_i\ep_i'^{-1}=\rmA_0
\end{equation*}
where we are using the fact that an orbit of an algebraic group is open in its closure, see \cite[2.3.3]{Spr98}.  So we have 
\begin{equation*}
    \lim \rmA_0 \ep_i' \rmM_0\Gamma/\Gamma \subset 
     \bigcup_{s=1,..,c_0}    \rmH_{0} \rmK_{\rmZ} l_{x_s}\Gamma /\Gamma.
\end{equation*}
Now this is a convergence inside $\rmL\Gamma/\Gamma$, which we identify with $\rmL/\Gamma_{\rmL}$. Thus 
\begin{equation*}
   \lim  [ \rmA_0 \ep_i' \rmM_0]_{\Gamma_{\rmL}} \subset 
   \bigcup_{s=1,..,c_0} [  \rmH_{0} \rmK_{\rmZ} l_{x_s}]_{\Gamma_{\rmL}} \;\;\text{inside}\;\;\rmL/\Gamma_{\rmL} .
\end{equation*}
By applying $\pi'$ we have
\begin{equation*}
   \lim  [ \pi(\rmA_0 \ep_i') ]_{\pi(\Gamma_{\rmL})} \subset 
   \bigcup_{s=1,..,c_0} [\pi( \rmA_0 \rmK_{\rmZ} l_{x_s})]_{\pi(\Gamma_{\rmL})}\;\;\text{inside}\;\;
   \pi(\rmL)/\pi(\Gamma_{\rmL} ).
\end{equation*}
By further projecting to $\pi(\rmK_{\rmZ})\bs \pi(\rmL)/\pi(\Gamma_{\rmL} )$, we have
\begin{equation*}
\begin{aligned}
    \lim \pi(\rmA_0) {}_{\pi(\rmK_{\rmZ})}[ \pi(\ep_i') ]_{\pi(\Gamma_{\rmL})} 
   \subset 
   \bigcup_{s=1,..,c_0} \pi(\rmA_0)
  {}_{\pi(\rmK_{\rmZ})} [\pi( l_{x_s})]_{\pi(\Gamma_{\rmL})}.
\end{aligned}
\end{equation*}
As the right hand side is a union of closed orbits of $\pi(\rmA_0)$, we may write it as a disjoint union by identifying certain indices,
\begin{equation*}
     \lim \pi(\rmA_0) {}_{\pi(\rmK_{\rmZ})}[ \pi(\ep_i') ]_{\pi(\Gamma_{\rmL})} 
   \subset 
   \bigsqcup \pi(\rmA_0)
  {}_{\pi(\rmK_{\rmZ})} [\pi( l_{x_s})]_{\pi(\Gamma_{\rmL})}.
\end{equation*}

As each of $\pi(\rmA_0)
  {}_{\pi(\rmK_{\rmZ})} [\pi( l_{x_s})]_{\pi(\Gamma_{\rmL})}$ is a compact set, there exists bounded neighborhood $\calN_s$  such that the closures of $\calN_s$ as $s$ varies are disjoint. But for all $i$, $\pi(\rmA_0) {}_{\pi(\rmK_{\rmZ})}[ \pi(\ep_i') ]_{\pi(\Gamma_{\rmL})} $ are connected, so they are contained in exactly one $\calN_i$ for $i$ large enough and therefore
  \begin{equation*}
      \lim \pi(\rmA_0) {}_{\pi(\rmK_{\rmZ})}[ \pi(\ep_i') ]_{\pi(\Gamma_{\rmL})} 
   \subset 
    \pi(\rmA_0)
  {}_{\pi(\rmK_{\rmZ})} [id]_{\pi(\Gamma_{\rmL})}.
  \end{equation*}
  
  Now take a small neighborhood $\calN$ of 
  \begin{equation*}
      \pi(\rmA_0)
  {}_{\pi(\rmK_{\rmZ})}[id]_{\pi(\Gamma_{\rmL}\cap \rmA_0) } \subset \calN \subset
  \pi(\rmK_{\rmZ}) \bs \pi(\rmL)/
  \pi(\Gamma_{\rmL}\cap \rmA_0) .
  \end{equation*}
  Let $p: \pi(\rmK_{\rmZ}) \bs \pi(\rmL)/
  \pi(\Gamma_{\rmL})  \to
  \pi(\rmK_{\rmZ}) \bs \pi(\rmL)/
  \pi(\Gamma_{\rmL}\cap \rmA_0) $ denote the natural projection.
  Choose $\calN$ small enough such that $p$ restricted to $\calN$ is a homeomorphism onto its image.
  Then for $i$ large enough, 
  \begin{equation*}
      \pi(\rmA_0) {}_{\pi(\rmK_{\rmZ})}[ \pi(\ep_i') ]_{\pi(\Gamma_{\rmL})}
      \subset p(\calN).
  \end{equation*}
  Hence for $i$ large enough,
  \begin{equation*}
      \pi(\rmA_0) {}_{\pi(\rmK_{\rmZ})}[ \pi(\ep_i') ]_{\pi(\Gamma_{\rmL}\cap \rmA_{Y})}
      \subset \calN.
  \end{equation*}
  This shows, in particular, that $\pi(\rmA_0\ep_i')$ is bounded in $\pi(\rmL)/\pi(\rmA_0)$. Consequently, 
  \begin{equation*}
      \pi(\ep_i'^{-1}\rmA_0\ep_i') = \pi(\rmA_0),\quad
      \rmH'_i= \rm\rmH_0,\quad Y_i= \rmK_0 \bs \rmK_0\ep_i \rm\rmH_0 \Gamma/\Gamma.
  \end{equation*}
   But by item 2 of Lemma \ref{lemmaRigidisAlgebraic}, $\rm\rmH_0$ only has one totally geodesic orbit on $\rmK_0 \bs \rmG$. So $  \rmK_0 \bs \rmK_0\ep_i \rm\rmH_0=  \rmK_0 \bs \rmK_0\rm\rmH_0$ and the proof completes.
   
\subsection{Proof of Theorem \ref{thmGeometric}}   

The proof follows the same line as in \cite{OhETDS2004}.

By Lemma \ref{lemmaRigidisAlgebraic} and use the notation there, $Y= \rmK_0 \bs \rmK_0\rmH_ig_{Y}\Gamma/\Gamma$ for some $i\in\{1,...,s\}$ and $g_{Y}\in \rmG$. By Theorem \ref{thmMain}, we may assume $g_{Y}$ belongs to a fixed compact set $\calC \in\rmG$. Depending on $\calC$, we find a number $C_{12}>0$ such that 
\begin{equation*}
    \Vol(h^{-1}\rmH_ig\Gamma/\Gamma)<C_{12} \cdot \Vol(\rmH_ig\Gamma/\Gamma)  
\end{equation*}
for all $i\in\{1,...,s\}$ and $g\in\rmG$ such that $\rmH_ig\Gamma/\Gamma$ has finite volume. Let $C_{11}$ be as in Lemma \ref{lemmaVolBounded}.
Thus 
\begin{equation*}
    \Vol(g_Y^{-1}\rmH_ig_Y\Gamma/\Gamma) \leq C_{12} \Vol(\rmH_ig_Y\Gamma/\Gamma)   \leq 
    C_{11}C_{12} \Vol(Y).
\end{equation*}
Thus for a fixed $T>0$,  $\Vol(g_Y^{-1}\rmH_ig_Y\Gamma/\Gamma)$ as $Y$ varies in $ \calT\calG^{N,\rigid}_{\leq T}$ is bounded. By \cite[Theorem 5.1]{DanMar93}, the set 
\begin{equation*}
    \left\{
    g_Y^{-1}\rmH_ig_Y\cap \Gamma\; \middle\vert\;
    Y\in \calT\calG^{N,\rigid}_{\leq T}
    \right\}
\end{equation*}
is finite. To recover $Y$ from $ g_Y^{-1}\rmH_ig_Y\cap \Gamma$, it suffices to note that $g_Y^{-1}\rmH_ig_Y\cap \Gamma$ is Zariski dense in $g_Y^{-1}\rmH_i g_Y$ and that 
$\rmK_0 \bs \rmK_0 \rmH_i g_{Y}$ is the unique orbit of $g_Y^{-1}\rmH_i g_Y$ on $\rmK_0\bs \rmG$ that is totally geodesic. So we are done.

\bibliographystyle{amsalpha}
\bibliography{ref}

\end{document}